\numberwithin{equation}{section}
\newtheorem{theorem}{Theorem}[section]
\newtheorem{lemma}{Lemma}[section]
\newtheorem{definition}{Definition}[section]
\newtheorem{procedure}{Procedure}
\newtheorem{remark}{Remark}[section]
\numberwithin{equation}{section}
\def\r{{\rm r}}
\def\BO{{\rm BO}}
\def\supp{{\rm supp\,}}
\def\diam{{\rm diam\, }}
\def\dist{{\rm dist}}
\def\Gen{{\mathfrak {Gen}}}
\def\ZU{\ensuremath{\mathfrak U}}
\def\ZS{\ensuremath{\mathcal S}}
\def\ZM{\ensuremath{\mathfrak M}}
\def\ZB{\ensuremath{\mathfrak B}}
\def\ZA{\ensuremath{\mathcal A}}
\def\zA{\ensuremath{\mathfrak A}}
\def\ZZ{\ensuremath{\mathbb Z}}
\def\ZI{\ensuremath{\mathbb I}}
\def\ZN{\ensuremath{\mathbb N}}
\def\ZP{\ensuremath{\mathcal P}}
\def\ZD{\ensuremath{\mathcal D}}
\def\ZK{\ensuremath{\mathcal K}}
\def\ZR{\ensuremath{\mathbb R}}
\def\pr{\ensuremath{\mathrm {pr}}}
\def\ZL{\ensuremath{\mathcal L}}
\def\ZD{\ensuremath{\mathcal D}}
\def\ZH{\ensuremath{\mathcal H}}
\def\Ch{{\mathfrak {Ch}}}
\def\ZG{{\mathcal G\,}}
\newcommand {\e }[1]{(\ref{#1})}
\newcommand {\lem }[1]{Lemma \ref{#1}}
\newcommand {\df }[1]{Definition \ref{#1}}
\newcommand {\trm }[1]{Theorem \ref{#1}}
\newcommand {\sect }[1]{Section \ref{#1}}
\begin{document}
	\title{An abstract theory of singular operators}
	
	\author
	{Grigori A. Karagulyan}
	
	\address{G. A. Karagulyan, Faculty of Mathematics and Mechanics, Yerevan State University, Alex Manoogian, 1, 0025, Yerevan, Armenia}
	\email{g.karagulyan@ysu.am} 
	
	\subjclass[2010]{42B20, 42B25, 43A85}
	\keywords{Calder\'on-Zygmund operator, weighted inequalities, sparse operators, maximal function, martingale transform, homogeneous spaces}
	\date{}
	\maketitle
	\begin{abstract}
		We introduce a class of operators on abstract measure spaces, which unifies the Calder\'on-Zygmund operators on spaces of homogeneous type, the maximal functions, the martingale transforms and Carleson operators. We prove that such operators can be dominated by simple sparse operators with a definite form of the domination constant. Applying these estimates, we improve several results obtained by different authors in recent years.
	\end{abstract}
	\section{introduction}
	
	The study of weighted inequalities in Harmonic Analysis started in early 1970's. In 1972 Muckenhaupt \cite{Muck} proved  that the maximal function is bounded on $L^p(w)$ for $1<p<\infty$ if and only if the weight $w$ satisfies the $A_p$ condition
	\begin{equation}\label{h63}
	[w]_{A_p}=\sup_{I}\left(\frac{1}{|I|}\int_Iw\right)\left(\frac{1}{|I|}\int_Iw^{-1/(p-1)}\right)^{p-1}<\infty.
	\end{equation}
	One year later Hunt, Muckenhaupt and Wheeden \cite{HMW} established the same property for Hilbert transform. For the general Calder\'on-Zygmund operators weighted $L^p(w)$ bound was first proved by Coifman and Fefferman \cite{CoFe}. 
	
	In 1993 Buckley \cite{Buck}  discovered that the maximal function $M$ has the sharp estimate 
	\begin{equation}\label{z53}
	\|Mf\|_{L^p(w)\to L^p(w)}\le C\|w\|_{A_p}^{1/(p-1)},
	\end{equation}
	arising a similar problem for Calder\'on-Zygmund operators. Last fifteen years there has been an activity in the investigation of this problem. For the general Calder\'on-Zygmund operators the conjecture was the bound
	\begin{equation}\label{conj}
	\|Tf\|_{L^p(w)\to L^p(w)}\le C\|w\|_{A_p}^{\max\{1,1/(p-1)\}}.
	\end{equation} 
	An extrapolation theorem proved in \cite{DGPP} reduced the conjecture to the case $p=2$ so the inequality \e {conj} became known as $A_2$ conjecture. After being established for several particular operators first (\cite{PeVo}, \cite{Pet1}, \cite{Pet2}, \cite{PTV}, \cite{Hyt7}), in 2010 Hyt\"onen \cite{Hyt1} proved $A_2$ conjecture for the general Calder\'on-Zygmund operators.

	Series of recent works are motivated on domination of Calder\'on-Zygmund operators by very simple sparse operators (\cite {Ler2}-\cite{Ler6}, \cite{CoRe}, \cite{Lac1}, \cite{Hyt2}-\cite{Hyt6}). From such results in particular follows \e {conj}, since the $A_2$ bound is very easy to establish for the sparse operators (\cite{Ler4}, \cite{Hyt3}, \cite{Moen1}, \cite{Moen2}). A domination of the classical Calder\'{o}n-Zygmund operators on $\ZR^n$ by sparse operators was discovered by Lerner \cite{Ler4}. Applying this domination he gave a simplified proof of Hyt\"onen $A_2$ theorem.  Lacey \cite{Lac1} proved a pointwise domination theorem for more general $\omega$-Calder\'{o}n-Zygmund operators (see \e {h75}-\e {h80}) with  $\omega$ satisfying the Dini condition 
	\begin{equation}\label{z40}
	\int_0^1\frac{\omega(t)}{t}dt<\infty,
	\end{equation} 
	deriving weighted bound \e {conj} for such operators too. Lacey's result was a stronger version of another pointwise bound independently proved by Conde-Alonso, Rey \cite{CoRe} and Lerner-Nazarov \cite{LerNaz}. Moreover, Lacey's inequality only assumes the Dini condition, while prior approaches \cite {CoRe, LerNaz} require $1/t$ in the Dini integral be replaced by $(\log 2/t)/t$.  Hyt\"onen, Roncal and Tapiola \cite{Hyt6} elaborated the proof of Lacey \cite{Lac1} to get a precise linear dependence of the domination constant on the characteristic numbers of the operator. Lerner \cite{Ler6} gave a simple proof of Lacey-Hyt\"onen-Roncal-Tapiola theorem.
	
	Anderson and Vagharshakyan \cite{Vag2} proved $A_2$ theorem for the Calder\'{o}n-Zygmund operators in general spaces of homogeneous type with modulus of continuity $\omega(t)=t^\alpha$, $\alpha>0$. 
	
	In late 1970's, several authors considered also martingale analogs of the $A_p$ theory. For
	instance, Izumisawa-Kazamaki \cite{IzKa}, proved a variant of the Muckenhoupt maximal function
	result \cite{Muck} in this setting. When it came to martingale transforms, the distinction between
	the homogeneous and non-homogeneous cases was already recognized by these authors.
	Nevertheless, norm inequalities for martingale transforms were proved by Bonami-Lépingle
	\cite{BoLe}. The $A_2$ theorem for martingale transform was proved recently by Thiele, Treil and Volberg \cite {TTV}.
	Lacey \cite{Lac1} gave a self-contained, short and elementary proof of this theorem. Moreover, he  established a pointwise domination theorem for martingale transforms too. 
	
	Grafakos-Martell-Soria \cite{GMS} and  Di Plinio-Lerner \cite{PL} considered weighted estimates for maximal modulations of Calder\'{o}n-Zygmund operators on $\ZR^n$, in particular, for Carleson or Walsh-Carleson operators. The paper \cite{GMS} establishes weighted norm control of the Carleson operators by the maximal function. In \cite{PL} the authors proved weighted norm estimates of Carleson and Walsh-Carleson operators with explicit dependence of the constants on $A_p$-characteristics of the weight.   
	
	In this paper we introduce so called $\BO$ (bounded oscillation) operators on abstract measure spaces. Those operators unify the Calder\'on-Zygmund operators and maximal functions in general homogeneous spaces, martingale transforms (non-homogeneous case) and Carleson operators. The definition of $\BO$ operators is motivated by the papers \cite{Kar1}, \cite {Kar2}, where some exponential estimates for Calder\'on-Zygmund and other related operators were proved. We shall prove that those operators have pointwise domination by sparse operators and then satisfy the bound \e {conj}. We derive variety of other properties of $\BO$ operators significant for their further investigations. 
	
	To define $\BO$ operators we introduce a concept of ball-basis for an abstract measure space, which is a family of measurable sets holding some common properties of $d$-dimensional balls on $\ZR^d$ and their analogues in related theories (martingales, dyadic analysis). 
	
	\begin{definition} Let $(X,\ZM,\mu)$ be a measure space. A family of sets $\ZB\subset \ZM$ is said to be a ball-basis if it satisfies the following conditions: 
		\begin{enumerate}
			\item[B1)] $0<\mu(B)<\infty$ for any ball $B\in\ZB$.
			\item[B2)] For any points $x,y\in X$ there exists a ball $B\ni x,y$.
			\item[B3)] If $E\in \ZM$, then for any $\varepsilon>0$ there exists a finite or infinite sequence of balls $B_k$, $k=1,2,\ldots$, such that
			\begin{equation*}	
			\mu\left(E\bigtriangleup \bigcup_k B_k\right)<\varepsilon.
			\end{equation*} 
			\item[B4)] For any $B\in\ZB$ there is a ball $B^*\in\ZB $ (called {\rm hull} of $B$) satisfying the conditions
			\begin{align}
			&\bigcup_{A\in\ZB:\, \mu(A)\le 2\mu(B),\, A\cap B\neq\varnothing}A\subset B^*,\label{h12}\\
			&\qquad\qquad\mu(B^*)\le \ZK\mu(B),\label{h13}
			\end{align}
			where $\ZK$ is a positive constant.
		\end{enumerate}
		
	\end{definition}
	One can easily check that the family of Euclidean balls in $\ZR^n$ forms a ball-basis. Moreover, we will see below (see \trm {T18}) that if the family of metric balls in spaces of homogeneous type satisfies the density condition (see \df {density}), then it is a ball-basis too. The martingale basis considered in \sect{S8} is an example of ball-bases having non-doubling property. 
	\begin{definition}
		Let $1\le r<\infty$, $(X,\ZM,\mu)$ be a measure space and $L^0(X)$ be the linear space of functions (include non-measurable functions) on $X$. An operator 
		\begin{equation*}
		T:L^r(X)\to L^0(X)
		\end{equation*}
		is said to be subadditive if
		\begin{align*}
		&|T(\lambda\cdot f)(x)|=|\lambda|\cdot |Tf(x)|,\quad \lambda\in\ZR,\\
		&|T(f+g)(x)|\le |Tf(x)|+|Tg(x)|.
		\end{align*}
	\end{definition}
	\begin{remark}
		As we will see below in the definitions of some operators (maximal function, $T^*$) some non-measurable functions can appear. To apply the results of the paper for such operators we allow non-measurably of the images in the definition of general subadditive operators. The definitions of $L^p$ (weak-$L^p$) norms and some standard inequalities that we need for non-measurable functions will be stated in the next section.
	\end{remark}
	Let $1\le r<\infty$ be a fixed number in Sections 1-4. For $f\in L^r(X)$ and $B\in\ZB$ we set 
	\begin{equation*}
	\langle f\rangle_{B,r}=\left(\frac{1}{\mu(B)}\int_{B}|f|^r\right)^{1/r},\quad \langle f \rangle^*_{B,r}=\sup_{A\in \ZB:A\supset B}\langle f\rangle_{A,r}.
	\end{equation*}
	In the case $r=1$ for those quantities it will be used the notations $\langle f\rangle_{B}$ and $\langle f \rangle^*_{B}$ respectively (Sections 5-8).
	\begin{definition}
		We say that a subadditive operator $T$ is a bounded oscillation operator with respect to a ball-basis $\ZB$ if 
		\begin{enumerate}
			\item [T1)](Localization) for every $B\in \ZB$ we have
			\begin{multline}\label{d7}
			\sup_{x,x'\in B,\,f\in L^r(X) }\frac{|T(f\cdot \ZI_{X\setminus B^*})(x)-T(f\cdot \ZI_{X\setminus B^*})(x')|}{\langle f \rangle^*_{B,r}}\\
			\le\ZL_1=\ZL_1(T)<\infty,
			\end{multline}
			\item [T2)](Connectivity) for any $A\in \ZB$ ($A^*\neq X$) there exists a ball $B\supsetneq A$ (i.e. $B\supset A$, $B\neq A$) such that
			\begin{equation}\label{d20}
			\sup_{x\in A,\,f\in L^r(X) }\frac{|T(f\cdot \ZI_{B^*\setminus A^*})(x)|}{\langle f\rangle_{B^*,r}}\le \ZL_2=\ZL_2(T)<\infty,
			\end{equation}
		\end{enumerate}
		where $\ZL_1$ and $\ZL_2$ are the least constants satisfying \e {d7} and \e {d20} respectively. The family of all bounded oscillation operators with respect to a ball-basis $\ZB$ will be denoted by $\BO_\ZB$ or simply $\BO$.
	\end{definition}
	It will be proved below (\trm {T4}) that if the ball-basis $\ZB$ satisfies the doubling condition, then the localization property implies connectivity. 
	\begin{definition}
		A collection of balls $\ZS\subset \ZB$ is said to be sparse or $\gamma$-sparse if for any $B\in \ZS$ there is a set $E_B\subset B$ such that $\mu(E_B)\ge \gamma\mu(B)$ and the sets $\{E_B:\, B\in \ZS\}$ are pairwise disjoint, where  $0<\gamma<1$ is a constant.   
	\end{definition}
	Given family of balls $\ZS$ we associate the operator 
	\begin{equation*}
	\ZA_{\ZS,r}f(x)=\sum_{A\in \ZS}\left\langle f\right\rangle_{A,r}\cdot\ZI_A(x).
	\end{equation*}
	If $\ZS$ is a sparse collection of balls, then we say $\ZA_{\ZS,r}$ is a sparse operator.  In the case $r=1$ we will simply write $\ZA_{\ZS}$. Further, positive constants depending on $\ZK$ (see \e {h13}) will be called admissible constants and the relation $a\lesssim b$ will stand for $a\le c\cdot b$, where $c>0$  is admissible. We write $a\sim b$ if the relations  $a\lesssim b$ and $b\lesssim a$ hold at the same time.
	\begin{theorem}\label{T1}
		Let an operator $T\in\BO_\ZB(X)$ satisfy weak-$L^r$ inequality. Then for any function $f\in L^r(X)$ and a ball $B\in\ZB$ there exists a family of balls $\ZS$, which is a union of two $\gamma$-sparse collections and 
		\begin{equation}\label{z34}
		|Tf(x)|\lesssim (\ZL_1+\ZL_2+\|T\|_{L^r\to L^{r,\infty}})\cdot \ZA_{\ZS,r} f(x),\text { a.e. } x\in B,
		\end{equation}
		where $\ZL_1$ and $\ZL_2$ are the constants \e {d7}, \e {d20} and $0<\gamma<1$ is an admissible constant.
	\end{theorem}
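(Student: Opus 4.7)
My plan is to carry out a Calder\'on-Zygmund stopping-time decomposition inside $B$, in the spirit of Lerner's sparse domination argument, combined with iterated applications of the connectivity axiom T2. The sparse family $\ZS$ will be built as the union of two $\gamma$-sparse subfamilies $\ZS_1$ and $\ZS_2$: the first a tree of iterated sub-balls of $B$ controlling the local part $T(f\ZI_{B^*})$, and the second a chain of enlargements used to absorb both the tail $T(f\ZI_{X\setminus B^*})$ and the errors appearing when passing from one stopping level to the next. The weak-$L^r$ hypothesis will be used only through a single Chebyshev-type estimate at each stopping step.

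For the local construction I fix $B\in\ZB$, $f\in L^r(X)$, and set $C:=2\ZK^{1/r}\|T\|_{L^r\to L^{r,\infty}}$. The weak-$L^r$ bound together with (\ref{h13}) gives $\mu(\Omega_0)\le\mu(B)/2$ for $\Omega_0:=\{x\in B:\,|T(f\ZI_{B^*})(x)|>C\langle f\rangle_{B^*,r}\}$. A Vitali-type selection on the balls approximating $\Omega_0$ from axiom B3 produces a disjoint family $\{B^{(1)}_j\}\subset\ZB$ of sub-balls of $B$ with $\bigcup_j B^{(1)}_j\supseteq\Omega_0$ (modulo a null set) and $\sum_j\mu(B^{(1)}_j)\le\mu(B)/2$, so that $E_B:=B\setminus\bigcup_j B^{(1)}_j$ witnesses the sparse property at level zero with $\gamma=1/2$. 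On $E_B$ the bound $|T(f\ZI_{B^*})(x)|\le C\langle f\rangle_{B^*,r}$ is automatic. I then iterate the construction inside each $B^{(1)}_j$, with $B^{(1)}_j$ and $f\ZI_{(B^{(1)}_j)^*}$ replacing $B$ and $f\ZI_{B^*}$, and recurse to obtain $\ZS_1=\{B\}\cup\{B^{(1)}_j\}\cup\{B^{(2)}_{j,k}\}\cup\cdots$, which is $\gamma$-sparse by construction.

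To close the argument I must pass, at each level, from $T(f\ZI_{B^*})$ to $T(f\ZI_{(B^{(1)}_j)^*})$, and globally from $Tf$ to $T(f\ZI_{B^*})$. Here T1 and T2 enter: T1 directly bounds the oscillation of $T(f\ZI_{X\setminus B^*})$ on $B$ by $\ZL_1\langle f\rangle^*_{B,r}$, while the constant-valued part and the inter-level errors are absorbed by iterating T2, producing a chain $A_0\subsetneq A_1\subsetneq\cdots\subsetneq A_N$ whose hulls exhaust the outer region. Telescoping by subadditivity, $|T(f\ZI_{A_N^*\setminus A_0^*})(x)|\le\sum_{k=0}^{N-1}|T(f\ZI_{A_{k+1}^*\setminus A_k^*})(x)|\le\ZL_2\sum_{k=0}^{N-1}\langle f\rangle_{A_{k+1}^*,r}$ for $x\in A_0$. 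Pooling all such chain balls into $\ZS_2$ and assembling the pieces yields (\ref{z34}). The main obstacle will be ensuring that $\ZS_2$ is $\gamma'$-sparse with an admissible $\gamma'$: T2 only guarantees $A_{k+1}\supsetneq A_k$ without a definite measure increment, so a pigeonhole on hull measures — or a bounded iteration of T2 absorbing comparable-measure balls into a single sparse step via (\ref{h12})-(\ref{h13}) — will be needed to thin each chain while preserving the telescoping estimate. A secondary technical point is obtaining the disjoint ball cover of $\Omega_0$ using only B1-B4 rather than a classical Whitney decomposition, which I handle via a maximal argument on the approximating balls from B3.
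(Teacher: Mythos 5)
Your skeleton (stopping-time decomposition, sparse exceptional sets via a Vitali-type selection, T1 for the tail, iterated T2 chains for the inter-level errors) points in the right general direction, and you correctly identify where the difficulty lies. But the obstacle you flag at the end is not a technical loose end --- it is the central difficulty of the theorem, and neither of your proposed fixes resolves it. The axiom T2 guarantees for each $A$ \emph{some} ball $B\supsetneq A$ with $\Delta(A,B)\le\ZL_2$, with no quantitative measure increment whatsoever; a chain built by iterating T2 alone can therefore consist of infinitely many balls of essentially equal measure, so it neither escapes the stopping region nor produces a telescoping sum dominated by a sparse operator, and no pigeonhole on hull measures can repair this, since nothing prevents all hulls in the chain from being comparable. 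The paper's resolution (Lemmas \ref{L21}, \ref{L22} and \ref{L8}) is a dichotomy: either the set of measures of balls meeting $B$ has no gap between $\mu(B)$ and $2\mu(B)$ and slightly above, in which case one jumps to a ball of at least doubled measure using only T1 and the weak-$L^r$ bound (Lemma \ref{L4}, where the measure ratio is bounded so the estimate does not blow up); or there is a gap, which forces the intermediate ball $B'$ to satisfy $B'^{[1]}=B'$, and then the two-balls relation forces the T2-successor of $B'$ to have measure $>2\mu(B')$. This is what produces a chain along which the measure doubles every two steps while each increment $\Delta(B_{k-1}^{[2]},B_k)$ stays bounded by $\ZL_1+\ZL_2+\|T\|_{L^r\to L^{r,\infty}}$. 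Your proposal contains no substitute for this argument.

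A second, related gap is in your local iteration. Passing from $T(f\ZI_{B^*})$ on $B$ to $T(f\ZI_{(B^{(1)}_j)^*})$ on a stopping ball $B^{(1)}_j$ requires bounding $T(f\ZI_{B^*\setminus (B^{(1)}_j)^*})$ on $B^{(1)}_j$ by $\lesssim\ZL\langle f\rangle_{B^*,r}$; the only direct tool, Lemma \ref{L4}, gives a bound proportional to $(\mu(B)/\mu(B^{(1)}_j))^{1/r}$, which is unbounded without doubling. The paper again routes this through the T2 chain, together with a stopping rule (take the first chain ball whose hull is not contained in the exceptional set $F$) that supplies a good point $\xi$ at which the maximal-type majorant $\Gamma f_A(\xi)$ is under control; that point is what converts the chain estimates into averages over balls of the sparse family. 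Finally, your claim that the sets $E_B=B\setminus\bigcup_j B^{(1)}_j$ are pairwise disjoint with $\mu(E_B)\ge\mu(B)/2$ across the whole tree needs the stopping balls of different generations and branches to interact in a controlled way; the covering balls produced by B3) need not be contained in $B$, and the paper needs two nontrivial pruning procedures (organized by the rank $\r(B)=[\log_R\mu(B)]$) plus Lemma \ref{L1-2} to extract from the tree a union of two genuinely sparse collections. As written, your construction does not yield sparseness, and the chain difficulty is unresolved rather than merely technical.
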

	\begin{definition}
		For $T\in \BO_\ZB$ define
		\begin{equation*}
		T^*f(x)=\sup_{B\in \ZB:\, x\in  B}|T(f\cdot \ZI_{X\setminus B^*})(x)|.
		\end{equation*}	
	\end{definition}
	
	We shall prove below (\trm {T6}) that if $T\in\BO_\ZB$ satisfies weak-$L^r$ estimate, then $T^*\in\BO_\ZB$ and satisfies weak-$L^r$ bound too. So from \trm {T1} we will immediately get 
	\begin{theorem}\label{T0}
		If $T\in\BO_\ZB(X)$ satisfies weak-$L^r$ inequality, then for any function $f\in L^r(X)$ and a ball $B\in\ZB$ there exists a family of balls $\ZS$, which is a union of two $\gamma$-sparse collections and 
		\begin{equation*}
		|T^*f(x)|\lesssim (\ZL_1(T)+\ZL_2(T)+\|T\|_{L^r\to L^{r,\infty}})\cdot\ZA_{\ZS,r} f(x),\text { a.e. } x\in B.
		\end{equation*}
	\end{theorem}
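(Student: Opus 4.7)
The plan is to reduce Theorem \ref{T0} to Theorem \ref{T1} applied to the maximal operator $T^{*}$ in place of $T$. Concretely, Theorem \ref{T1} already gives the desired sparse domination for any operator in $\BO_\ZB$ enjoying the weak-$L^r$ bound, so the only work is to verify that $T^{*}$ fits the hypotheses of Theorem \ref{T1} with constants controlled by those of $T$.

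First I would invoke Theorem \ref{T6} (stated in the excerpt just before Theorem \ref{T0}): it guarantees both that $T^{*}\in\BO_\ZB$ and that $T^{*}$ satisfies a weak-$L^r$ inequality, with the three quantities $\ZL_1(T^{*})$, $\ZL_2(T^{*})$, and $\|T^{*}\|_{L^r\to L^{r,\infty}}$ controlled, up to admissible constants, by the sum $\ZL_1(T)+\ZL_2(T)+\|T\|_{L^r\to L^{r,\infty}}$. Once that is in hand, the operator $T^{*}$ qualifies as an input to Theorem \ref{T1}.

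Then, fixing $f\in L^r(X)$ and $B\in\ZB$, I apply Theorem \ref{T1} to $T^{*}$: this produces a collection $\ZS\subset\ZB$ that is a union of two $\gamma$-sparse families and satisfies
\begin{equation*}
|T^{*}f(x)|\lesssim \bigl(\ZL_1(T^{*})+\ZL_2(T^{*})+\|T^{*}\|_{L^r\to L^{r,\infty}}\bigr)\cdot\ZA_{\ZS,r}f(x)
\end{equation*}
for almost every $x\in B$. Substituting the control of the $T^{*}$-constants from Theorem \ref{T6} gives exactly the bound claimed in Theorem \ref{T0}, with the same exponent $\gamma$ as in Theorem \ref{T1}.

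The only genuine obstacle here is the quantitative dependence claim embedded in Theorem \ref{T6}; the rest of the argument is a clean substitution. If Theorem \ref{T6} were stated only qualitatively (i.e.\ merely asserting $T^{*}\in\BO_\ZB$ without tracking the constants), one would have to go back and inspect its proof to extract the dependencies $\ZL_i(T^{*})\lesssim \ZL_1(T)+\ZL_2(T)+\|T\|_{L^r\to L^{r,\infty}}$, typically by unwinding the definitions \eqref{d7}--\eqref{d20} applied to $T^{*}$ and estimating each truncated piece by the corresponding localization and connectivity constants of $T$ together with its weak-type bound. But since the excerpt explicitly promises this quantitative form of Theorem \ref{T6}, the derivation of Theorem \ref{T0} is immediate.
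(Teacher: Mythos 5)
Your proposal is correct and is essentially the paper's own argument: the paper proves Theorem \ref{T0} by noting it follows immediately from Theorem \ref{T1} applied to $T^{*}$, using Theorem \ref{T6} for $T^{*}\in\BO_\ZB$ with $\ZL_1(T^{*})\lesssim \ZL_1(T)+\|T\|_{L^r\to L^{r,\infty}}$ and $\ZL_2(T^{*})=\ZL_2(T)$, and Theorem \ref{T5-1} for the quantitative weak-$L^r$ bound $\|T^{*}\|_{L^r\to L^{r,\infty}}\lesssim \ZL_1+\|T\|_{L^r\to L^{r,\infty}}$. The only cosmetic difference is that you attribute all three constant controls to Theorem \ref{T6}, whereas the weak-type bound is explicitly supplied by Theorem \ref{T5-1}; the substitution step is otherwise identical.
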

	\trm {T20}, \trm {T5} and \trm {T19} below prove that the $\omega$-Calder\'{o}n-Zygmund operators on spaces of homogeneous type, the martingale transforms and the maximal functions are $\BO$ operators and so they all satisfy the estimate \e {z34}. Moreover, combining \e {z34} with the weighted bounds for sparse operators, we obtain $A_2$ theorems for all these operators. Hence \trm {T1} and \trm {T0} cover the above stated results concerning the weighted bounds and the domination by sparse operators. Lacey-Hyt\"onen-Roncal-Tapiola (\cite{Lac1}, \cite{Hyt6}) theorem is a version of \trm {T0} for the $\omega$-Calder\'{o}n-Zygmund operators on $\ZR^n$ with the Dini condition. Lacey \cite{Lac1} domination theorem for martingale transforms is another case of inequality \e {z34}. In $\ZR^n$ and in general spaces of homogeneous type, where a doubling condition holds, the proofs of such dominations are based on dyadic decomposition theorems (\cite{Hyt3}, \cite{HyKa}). In the case of martingale transforms (\cite{TTV}) instead of dyadic decomposition the properties of martingale basis is essentially used. Since a general ball-basis does not always satisfy the doubling condition (the typical example is the martingale basis), there is no dyadic decomposition in general. So our method of proof of \trm {T1} is different. It is direct and partially based on the papers \cite{Kar1} and \cite{Kar2}. 
	
	In the last sections we prove several weighted bounds for sparse operators to get $A_2$ theorems for some $\BO$ operators. The method of proofs of such theorems are based on a duality argument developed in the papers \cite{Ler6}, \cite{CoRe}, \cite{Hyt3}, \cite{Lac1}.
	
	Note that \trm {T1} also imply exponential integrability results of our papers \cite{Kar1}, \cite {Kar2} proved for the Calder\'{o}n-Zygmund operators on Euclidean spaces and for the partial sums of Walsh-Fourier series.
	
	Applying \trm {T1} for the maximal function corresponding to a general ball-basis, we do not get the full weighted estimate \e {z53}, which is known to be optimal for the maximal function in Euclidean spaces (\cite{Buck}). In the general case the optimality only occurs when $1<p\le 2$. So \e {z34} do not cover some estimates that has the maximal function. The reason is that the maximal function has some extra properties that the general $\BO$ operators do not have. An example of such a property is $L^\infty$-bound. 
	
	In the last section we prove that the maximal modulation of a $\BO$ operator is also $\BO$ operator, deriving pointwise sparse domination for maximal modulated $\BO$ operators too.  
	
	\section{Outer measure and $L^p$-norms of non-measurable functions}
	Let $(X,\ZM,\mu)$ be a measure space. Define the outer measure of a set $E\subset X$ by
	\begin{equation*}
	\mu^*(E)=\inf_{F\in \ZM:\, F\supset E}\mu(F).
	\end{equation*}
	We say two sets $A,B\subset X$ (not necessarily measurable) satisfy the relation $A\sim B$ if  
	\begin{equation*}
	\mu^*(A\bigtriangleup B)=0.
	\end{equation*}
	Denote by $\bar{\ZM}$ the family of sets in $X$, which are equivalent to a measurable set. It is clear that $\bar \ZM$ is $\sigma$-algebra. For a given set $E\subset X$ we define
	\begin{equation*}
	\bar E=\bigcap_{F\in\ZM:\, F\supset E}F.
	\end{equation*}
	Observe that
	\begin{equation*}
	\bar E\in \bar \ZM,\quad \mu^*(\bar E)=\mu^*(E).
	\end{equation*}
	For any function $f\in L^0(X)$ we denote
	\begin{align*}
	&G_f(t)=\{x\in X:\, |f(x)|>t\},\\
	& \lambda_f(t)=\mu^*\left(G_f(t)\right).
	\end{align*}
	Observe that the function
	\begin{equation*}
	\bar f(x)=\inf\{t\ge 0:\, x\in \overline {G_f(t)} \}
	\end{equation*}
	is positive, $\bar{\ZM}$-measurable and $|f(x)|\le \bar f(x)$. Besides, $\bar f$ is the smallest $\bar{\ZM}$-measurable positive function dominating $|f|$. Namely, if $g(x)$ is $\bar{\ZM}$-measurable and satisfies $|f(x)|\le g(x)$, then $\bar f(x)\le g(x)$. For arbitrary $f\in L^0(X)$ we define
	\begin{align*}
	&\|f\|_{L^p}=\|\bar f\|_{L^p}=\left(p\int_0^\infty t^{p-1}\lambda_f(t)dt\right)^{1/p},\\
	&\|f\|_{L^p\to L^{p,\infty}}=\|\bar f\|_{L^p\to L^{p,\infty}}=\sup_{t>0}t(\lambda_f(t))^{1/p}.
	\end{align*}
	
	\begin{definition}
		We say a subadditive operator $T$ satisfies weak-$L^p$ or strong-$L^p$ estimate if
		\begin{align*}
		&\|T\|_{L^p\to L^{p,\infty}}=\sup_{t>0,\,f\in L^p(X)}\frac{t\cdot ( \lambda_{Tf}(t))^{1/p}}{\|f\|_{L^p}}<\infty,\\
		&\|T\|_{L^p}=\sup_{f\in L^p(X)}\frac{\|Tf\|_{L^p}}{\|f\|_{L^p}}<\infty,
		\end{align*}
		respectively. 
	\end{definition}
	One can easily check that the standard triangle and  H\"{o}lder inequalities as well as the Marcinkiewicz interpolation theorem hold in such setting of $L^p$ norms. We will need the following case of the interpolation theorem.
	\begin{theorem}[Marcinkiewicz interpolation theorem, \cite {Zyg}, ch. 12.4]\label{M} If a subadditive  operator $T$ satisfies the weak-$L^1$ estimate and 
		the strong-$L^\infty$ estimate, then for $1<p<\infty$ it holds
		\begin{equation*}
		\|T\|_{L^p}\le c_p (\|T\|_{L^1\to L^{1,\infty}})^{1/p}\times (\|T\|_{L^\infty})^{1/q}.
		\end{equation*}
	\end{theorem}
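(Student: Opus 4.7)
The plan is to run the classical Calderón-truncation layer-cake argument, with the only subtlety being the outer-measure formalism of the preceding paragraphs. Fix $f\in L^p(X)$ and $t>0$; I would set $\gamma=1/(2\|T\|_{L^\infty})$ and split $f=f_t^1+f_t^2$, where $f_t^1=f\cdot\ZI_{\{|f|>\gamma t\}}$. The ``small'' piece $f_t^2$ satisfies $\|f_t^2\|_{L^\infty}\le\gamma t$, so the strong-$L^\infty$ hypothesis forces the upper envelope $\overline{Tf_t^2}$ to lie below $t/2$ off a set of outer measure zero.

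Subadditivity gives $|Tf|\le|Tf_t^1|+|Tf_t^2|$ pointwise, hence the set inclusion
\begin{equation*}
G_{Tf}(t)\subset G_{Tf_t^1}(t/2)\cup G_{Tf_t^2}(t/2),
\end{equation*}
whose second piece has outer measure zero by the previous step. The weak-$L^1$ hypothesis then bounds $\lambda_{Tf}(t)$ by $(2\|T\|_{L^1\to L^{1,\infty}}/t)\int_{\{|f|>\gamma t\}}|f|\,d\mu$. Substituting into the layer-cake formula $\|Tf\|_{L^p}^p=p\int_0^\infty t^{p-1}\lambda_{Tf}(t)\,dt$, swapping the order of integration by Tonelli, and computing the inner $t$-integral yields
\begin{equation*}
\|Tf\|_{L^p}^p\le\frac{2p}{p-1}\,(2\|T\|_{L^\infty})^{p-1}\,\|T\|_{L^1\to L^{1,\infty}}\,\|f\|_{L^p}^p,
\end{equation*}
which is the stated inequality after taking $p$-th roots, with $c_p$ of order $(p/(p-1))^{1/p}$.

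The only step that is not completely automatic, and the one I would spend most of the care on, is verifying that the non-measurability of $Tf$ does not obstruct any of the manipulations. Three facts suffice: (i) $\mu^*$ is a genuine Carathéodory-type outer measure and hence countably subadditive, so the set inclusion above transfers to distribution functions; (ii) the upper envelope $\bar g$ is $\bar\ZM$-measurable with $|g|\le \bar g$, so the $L^\infty$ bound on $Tf_t^2$ really does force $\lambda_{Tf_t^2}(t/2)=0$; and (iii) $\|Tf\|_{L^p}$ was \emph{defined} via $\overline{Tf}$, so the layer-cake identity and the Fubini/Tonelli interchange apply on the measurable level without any further justification. Once these are in place, Zygmund's classical proof transfers verbatim.
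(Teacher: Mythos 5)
Your argument is correct: it is the classical Calder\'on truncation proof of Marcinkiewicz interpolation, and the paper itself offers no proof of this theorem (it is quoted from Zygmund, ch.~12.4), so there is nothing for your argument to diverge from. The adaptations you flag for the outer-measure setting --- countable subadditivity of $\mu^*$ to pass from the pointwise inclusion $G_{Tf}(t)\subset G_{Tf_t^1}(t/2)\cup G_{Tf_t^2}(t/2)$ to distribution functions, the vanishing of $\lambda_{Tf_t^2}(t/2)$, and the fact that the layer-cake formula is the paper's definition of $\|\cdot\|_{L^p}$ --- are exactly the points that need checking, and they all go through.
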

	\section{Some properties of ball-basis}
	
	Let $\ZB$ be a ball-basis for the measure space $(X,\ZM,\mu)$. From B4) condition it follows that if balls $A,B$ satisfy $\mu(A)\le 2\mu(B)$, then $A\subset B^*$. This property will be called two balls relation.  Hull levels of a given ball $B\in \ZB$ will be denoted by
	\begin{equation*}
	B^{[0]}=B,\quad B^{[n+1]}=\left(B^{[n]}\right)^*.
	\end{equation*} 
	By property B4) we have $\mu(B^{[n+1]})\le \ZK\mu(B^{[n]})$. Applying this inequality consecutively, we get
	\begin{equation}\label{a33}
	\mu(B^{[n]})\le \ZK^n\mu(B),\quad n\ge 0.
	\end{equation}
	We say a set $E\subset X$ is bounded if $E\subset B$ for some $B\in\ZM$.
	\begin{lemma}\label{L1-1}
		Let $(X,\ZM,\mu)$ be a measure space and the family of sets $\ZB\subset \ZM$ satisfy B4)-condition. If $E\subset X$ is bounded and $\ZG $ is a family of balls with
		\begin{equation*}
		E\subset \bigcup_{G\in \ZG}G,
		\end{equation*}
		then there exists a finite or infinite sequence of pairwise disjoint balls $G_k\in \ZG$ such that
		\begin{equation}\label{c1}
		E \subset \bigcup_k G_k^{[1]}.
		\end{equation}
	\end{lemma}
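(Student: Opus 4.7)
The plan is a Vitali-style greedy selection, whose correctness rests on the \emph{two balls relation} derived immediately before the lemma: if $A,B\in\ZB$ satisfy $A\cap B\ne\varnothing$ and $\mu(A)\le 2\mu(B)$, then $A\subset B^*$. Apart from B1), this is the only consequence of the ball-basis axioms we will use.

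Boundedness gives $E\subset B_0$ for some ball $B_0\in\ZB$, and by B1) $\mu(B_0)<\infty$. Restrict to the subfamily $\ZG_0=\{G\in\ZG:G\cap E\ne\varnothing\}$, which still covers $E$, and let $M=\sup_{G\in\ZG_0}\mu(G)$. First dispose of the ``big ball'' case: if some $G\in\ZG_0$ has $\mu(G)\ge\mu(B_0)/2$, then $\mu(B_0)\le 2\mu(G)$ and $B_0\cap G\supset E\cap G\ne\varnothing$, so the two balls relation yields $B_0\subset G^*$ and the single ball $G$ already realises \e{c1}. Otherwise $M<\mu(B_0)/2<\infty$. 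I then pick $G_1\in\ZG_0$ with $\mu(G_1)\ge M/2$; having chosen pairwise disjoint $G_1,\ldots,G_n$, I set $\ZG_n=\{G\in\ZG_0:G\cap G_j=\varnothing\text{ for all }j\le n\}$, stop if $\ZG_n=\varnothing$, and otherwise pick $G_{n+1}\in\ZG_n$ with $\mu(G_{n+1})\ge M_n/2$, where $M_n=\sup_{G\in\ZG_n}\mu(G)$ (with the convention $M_0=M$).

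Every selected $G_k$ satisfies $\mu(G_k)\le M\le 2\mu(B_0)$ and meets $B_0$, so a second application of the two balls relation places $G_k\subset B_0^*$ for every $k$. Pairwise disjointness then gives $\sum_k\mu(G_k)\le\mu(B_0^*)\le\ZK\mu(B_0)<\infty$ by \e{h13}, and in particular $\mu(G_k)\to 0$ if the selection is infinite. To verify \e{c1}, fix $x\in E$ and pick any $G\in\ZG_0$ with $x\in G$. If the procedure terminates at step $n$, the stopping rule forces $G$ to intersect some $G_k$ with $k\le n$; if it does not terminate, the alternative that $G$ is disjoint from every $G_k$ would put $G\in\ZG_k$ for all $k$, hence $\mu(G_{k+1})\ge\mu(G)/2>0$ for all $k$, contradicting $\mu(G_k)\to 0$. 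Let $k_0$ be the smallest index with $G\cap G_{k_0}\ne\varnothing$; then $G\in\ZG_{k_0-1}$, so $\mu(G)\le M_{k_0-1}\le 2\mu(G_{k_0})$, and the two balls relation gives $G\subset G_{k_0}^*=G_{k_0}^{[1]}$, whence $x\in G_{k_0}^{[1]}$.

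The main subtlety, in the absence of any doubling hypothesis, is that the measures of balls in $\ZG$ are a priori unbounded, so the usual greedy step has nothing to be ``half of''. The device of first absorbing the oversized balls into a single $G^*\supset B_0$, and then running the greedy algorithm inside $B_0^*$, is precisely what restores the summability $\sum\mu(G_k)<\infty$ needed to close the argument.
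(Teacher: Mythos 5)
Your proof is correct and follows essentially the same route as the paper's: you absorb the oversized-ball case with the two balls relation, then run the same greedy half-maximal Vitali selection, use the finiteness of $\mu(B_0^*)\le\ZK\mu(B_0)$ to force $\mu(G_k)\to 0$, and finish by applying the two balls relation at the first selected ball that a given covering ball meets. The small deviations (restricting to balls meeting $E$, the threshold $\mu(G)\ge\mu(B_0)/2$ instead of $\mu(G)>\mu(B_0)$, and choosing $\mu(G_{n+1})\ge M_n/2$ rather than strictly above half the supremum) are cosmetic and do not change the argument.
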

	\begin{proof}
		The boundedness implies $E\subset B$ for some $B\in \ZB$. If there is a ball $G\in\ZG$ so that $G\cap B\neq \varnothing$ and $\mu(G)> \mu(B)$, then by two balls relation we have $E\subset B\subset G^{[1]}$. Thus the desired sequence can be formed by a single element $G$. Hence we can suppose that any element $G\in\ZG$ satisfies the conditions $G\cap B\neq \varnothing$ and $\mu(G)\le \mu(B)$. Therefore we get
		\begin{equation*}
		\bigcup_{G\in\ZG}G\subset B^{[1]}.
		\end{equation*}
		Take $G_1$ to be a ball from $\ZG$ satisfying 
		\begin{equation*}
		\mu(G_1)> \frac{1}{2}\sup_{G\in\ZG}\mu(G).
		\end{equation*}
		Then, suppose by induction we have already chosen elements $G_1,\ldots,G_k$ from $\ZG$. Take $G_{k+1}\in \ZG$  disjoint with the balls $G_1,\ldots,G_k$ and satisfying
		\begin{equation*}
		\mu(G_{k+1})> \frac{1}{2}\sup_{G\in \ZG:\,G\cap G_j=\varnothing,\,j=1,\ldots,k}\mu(G).
		\end{equation*}
		If for some $n$ we will not be able to determine $G_{n+1}$ the process will stop and we will get a finite sequence $G_1,G_2,\ldots, G_n$. Otherwise our sequence will be infinite. We shall consider the infinite case of the sequence (the finite case can be done similarly). Since the balls $G_n$ are pairwise disjoint and $G_n\subset B^{[1]}$, we have $\mu(G_n)\to 0$. Take an arbitrary $G\in\ZG$ such that $G\neq G_k$, $k=1,2,\ldots $. Let $m$ be the smallest integer such that
		\begin{equation*}
		\mu(G)>\frac{1}{2}\mu(G_{m+1}).
		\end{equation*}
		Observe that we have
		\begin{equation*}
		G \cap G_j\neq\varnothing
		\end{equation*}
		for some of $1\le j\le m$, since otherwise $G$ had to be chosen instead of $G_{m+1}$. Besides, we have $\mu(G)\le 2\mu(G_{j})$, which implies $G\subset G_{j}^{[1]}$. Since $G\in \ZG$ was taken arbitrarily, we get \e {c1}.
	\end{proof}
	\begin{lemma}\label{L1}
		Let $(X,\ZM,\mu)$ be a measure space with a ball-basis $\ZB$. If balls $B\in \ZB$, $G_k\in\ZB$, $k=1,2,\ldots$, satisfy the relation
		\begin{equation}\label{z18}
		G_k\cap B\neq\varnothing,\quad \mu(G_k)\to r=\sup_{A\in \ZB}\mu(A),
		\end{equation}
		then 
		\begin{equation*}
		X\subset \bigcup_k G_k^{[1]}.
		\end{equation*}
		Moreover, for any ball $A\in\ZB$ we have $A\subset G_k^{[1]}$ for some $k\ge k_0$. 
	\end{lemma}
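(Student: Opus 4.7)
My plan is to prove the stronger pointwise statement that every $x\in X$ lies in $G_k^{[1]}$ for all sufficiently large $k$; the second assertion will then follow at once, because the threshold past which this holds can be taken uniform over $x$ in a given ball $A\in\ZB$.

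Fix $x\in X$ and, for each $k$, choose (using the hypothesis $G_k\cap B\neq\varnothing$) a point $z_k\in G_k\cap B$. By axiom B2 there exists a ball $C_k\in\ZB$ with $x,z_k\in C_k$. Since $z_k\in C_k\cap G_k$, the two balls relation (the immediate consequence of B4 noted at the beginning of this section) forces $C_k\subset G_k^*=G_k^{[1]}$, and therefore $x\in G_k^{[1]}$, as soon as $\mu(C_k)\le 2\mu(G_k)$. The bound $\mu(C_k)\le r$ is automatic from the definition of $r$, and since $\mu(G_k)\to r$ we obtain $2\mu(G_k)>r\ge\mu(C_k)$ for all large $k$ in the case $r<\infty$; this settles the first inclusion in that case.

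For the second statement, the key observation is that the estimate $\mu(C_k)\le r$ is uniform in the choice of $x$, so the index $k_0$ past which $\mu(C_k)\le 2\mu(G_k)$ does not depend on $x\in A$. Hence $A\subset G_k^{[1]}$ for every $k\ge k_0$.

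The main obstacle is the case $r=\infty$, where the bound $\mu(C_k)\le r$ becomes vacuous and one must choose $C_k$ with a $k$-independent measure bound. A natural route is to fix a point $y_0\in B$ once and for all, produce a single ball $D=D(x)\ni x,y_0$ via B2 (so $\mu(D)<\infty$ is $k$-independent), and use $\mu(G_k)\to\infty$ to guarantee both $\mu(D)\le 2\mu(G_k)$ and $\mu(B)\le 2\mu(G_k)$ for large $k$. The two balls relation then yields $B\subset G_k^{[1]}$ and in particular $y_0\in D\cap G_k^{[1]}$; combining these through the two balls relation absorbs $D$ into a hull of $G_k$. The delicate point is to keep the final containment at level $[1]$ rather than drifting to $[2]$, which likely hinges on exploiting the near-maximality of $\mu(G_k)$ to force $D\cap G_k\neq\varnothing$ directly for large $k$, rather than merely $D\cap G_k^{[1]}\neq\varnothing$.
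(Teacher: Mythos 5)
Your argument is complete and correct when $r<\infty$: the ball $C_k\ni x,z_k$ meets $G_k$ at $z_k$ and satisfies $\mu(C_k)\le r<2\mu(G_k)$ for large $k$, so one application of the two balls relation puts $C_k$ inside $G_k^{[1]}$, with a threshold $k_0$ independent of $x$. But the case $r=\infty$ is a genuine gap, not a technicality you can wave at: you correctly diagnose that a $k$-independent ball $D\ni x,y_0$ need not meet $G_k$, but the repair you suggest --- using near-maximality of $\mu(G_k)$ to force $D\cap G_k\neq\varnothing$ --- does not work. Already for Euclidean balls in $\ZR^n$ one can take $G_k$ of radius $k$ centered at distance roughly $k+r(B)$ from $c(B)$ on the side opposite to $D$; then $G_k\cap B\neq\varnothing$ and $\mu(G_k)\to\infty$, yet $D\cap G_k=\varnothing$ for every $k$. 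The containment $D\subset G_k^{[1]}$ is still true, but it cannot be reached through a direct intersection of $D$ with $G_k$.

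The correct way to close the gap, which is how the paper argues (uniformly in $r$, so it also subsumes your $r<\infty$ case), is to absorb everything into one \emph{fixed} ball that contains $B$ \emph{before} ever touching $G_k$. Take $x\in A$, $y\in B$, a ball $C\ni x,y$ from B2), and let $G$ be whichever of $A,B,C$ has the largest measure; two applications of the two balls relation give $A\cup B\cup C\subset G^{[2]}$. Now $G^{[2]}$ is a single ball of finite measure (by B1)) with $B\subset G^{[2]}$, so $G^{[2]}\cap G_k\neq\varnothing$ for every $k$ by hypothesis; and since $G^{[2]}\in\ZB$ forces $r\ge\mu(G^{[2]})$, the convergence $\mu(G_k)\to r$ yields $\mu(G^{[2]})\le 2\mu(G_k)$ for all $k>k_0$. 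A single further application of the two balls relation then gives $A\subset G^{[2]}\subset G_k^{[1]}$, and the containment never drifts to level $[2]$ because the relation is applied to $G_k$ exactly once, with the fixed hull $G^{[2]}$ playing the role of the small ball. You should replace your $r=\infty$ sketch with this argument (or simply run it for all $r$ and discard the case split).
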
	
	\begin{proof}
		Since by B2)-condition every point $x\in X$ is in some ball, it is enough to prove the second part of the theorem. So let $A\in\ZB$. Chose points $x\in A$, $y\in B$. According to B2)-condition there is $C\in\ZB$ such that $x,y\in C$. Let $G$ be one of the balls $A,B$ and $C$, which has a biggest measure. Applying two ball relation twice, one can easily check that 
		\begin{equation*}
		A\cup B\cup C\subset G^{[2]}.
		\end{equation*}	 
		From \e {z18} we find an integer $k_0$ such that $\mu(G_k)>\mu(G^{[2]})/2$ for $k>k_0$. Therefore, since $G_k\cap G^{[2]}\neq\varnothing$, we get $A\subset G^{[2]}\subset G_k^{[1]}$, $k>k_0$.
	\end{proof}
	
	\begin{definition}\label{density}
		For a set $E\in \ZM $ a point $x\in E$ is said to be density point if for any $\varepsilon>0$ there exists a ball $B$ such that
		\begin{equation*}
		\mu(B\cap E)>(1-\varepsilon )\mu(B).
		\end{equation*} 
		We say a measure space $(X,\ZM,\mu)$ satisfies the density property if for any measurable set $E$ almost all points $x\in E$ are density points. 
	\end{definition}
	\begin{lemma}\label{L20}
		Let $(X,\ZM,\mu)$ be a measure space. If a family of measurable sets $\ZB$ satisfies the density property and B4)-condition, then for any bounded measurable set $E$, $\mu(E)>0$, and $\varepsilon>0$ there is a sequence of balls $\{B_k\}$ such that
		\begin{equation}\label{h81}
		\mu\left(\bigcup_{k}B_k\setminus E\right)<\varepsilon,\quad \mu\left(E\setminus \bigcup_{k}B_k\right)<\alpha \mu(E)
		\end{equation}
		where $0<\alpha<1$ is an admissible constant. 
	\end{lemma}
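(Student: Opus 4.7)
The plan is to combine the density hypothesis with the covering \lem{L1-1}. Fix a small parameter $\delta\in(0,1/2]$ to be specified later. By the density property, the set $E_d\subset E$ of density points of $E$ satisfies $\mu(E_d)=\mu(E)$, and for each $x\in E_d$ there is a ball $B_x\ni x$ with
\[
\mu(B_x\setminus E)<\delta\,\mu(B_x).
\]
The family $\{B_x:x\in E_d\}$ covers $E_d$, and $E_d$ is bounded because $E$ is. Thus \lem{L1-1} produces a (finite or infinite) sequence of pairwise disjoint balls $G_1,G_2,\dots$ drawn from $\{B_x\}$ with $E_d\subset\bigcup_kG_k^{[1]}$. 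I propose to declare these $G_k$ to be the balls $B_k$ of the statement.

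For the two required bounds I will exploit disjointness together with the inherited local density $\mu(G_k\cap E)\ge(1-\delta)\mu(G_k)$. On the one hand,
\[
(1-\delta)\sum_k\mu(G_k)\le\sum_k\mu(G_k\cap E)\le\mu(E),
\]
so $\sum_k\mu(G_k)\le\mu(E)/(1-\delta)\le 2\mu(E)$, whence
\[
\mu\!\Bigl(\bigcup_kG_k\setminus E\Bigr)\le\sum_k\mu(G_k\setminus E)\le\delta\sum_k\mu(G_k)\le 2\delta\,\mu(E).
\]
On the other hand, B4) applied to the cover $E_d\subset\bigcup_kG_k^{[1]}$ gives
\[
\mu(E)=\mu(E_d)\le\sum_k\mu\bigl(G_k^{[1]}\bigr)\le\ZK\sum_k\mu(G_k),
\]
so $\sum_k\mu(G_k\cap E)\ge(1-\delta)\mu(E)/\ZK$ and consequently
\[
\mu\!\Bigl(E\setminus\bigcup_kG_k\Bigr)\le\mu(E)-(1-\delta)\sum_k\mu(G_k)\le\Bigl(1-\frac{1-\delta}{\ZK}\Bigr)\mu(E).
\]

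It remains to choose $\delta$. Taking $\delta=\min(1/2,\varepsilon/(2\mu(E)))$ makes the first bound below $\varepsilon$, while the constraint $\delta\le 1/2$ in the second bound yields the admissible constant $\alpha=1-1/(2\ZK)<1$. The argument is essentially routine; the only point to watch is that $\delta$ is forced to be small (of order $\varepsilon/\mu(E)$) for the overshoot estimate, but this never worsens $\alpha$ because the factor $(1-\delta)/\ZK$ is monotone in $\delta$ and only an upper bound on $\delta$ enters the estimate for $\alpha$. Measurability of $E_d$ and the convention $x\in B_x$ in the density definition are taken as built into the density-property setup.
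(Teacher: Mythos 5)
Your proposal is correct and follows essentially the same route as the paper: cover the (almost all) density points by balls $B$ with $\mu(B\setminus E)<\delta\mu(B)$, extract a disjoint subfamily via \lem{L1-1} so that $E\subset\bigcup_kB_k^{[1]}$ a.s., take $\delta=\min\{1/2,\varepsilon/(2\mu(E))\}$, and use disjointness for the overshoot bound and \e{h13} for the undershoot bound, arriving at the same constant $\alpha=1-1/(2\ZK)$. The caveats you flag (that the density ball contains the point, and that the non-density points form a null set) are exactly what the paper's own argument implicitly uses.
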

	\begin{proof}
		Applying the density property, one can find a family of balls $\ZB$ satisfying
		\begin{align*}
		&E\subset \bigcup_{B\in\ZB}B \text { a.s.},\\
		&\mu(B\cap E)>(1-\delta )\mu(B),\quad B\in\ZB,
		\end{align*}
		where 
		\begin{equation*}
		\delta=\min\left\{\frac{\varepsilon  }{2\mu(E)},1/2\right\}.
		\end{equation*}
		Then, apply \lem {L1-1} we get a subfamily of pairwise disjoint balls $B_k$ with \e {c1}. Thus we have
		\begin{align*}
		\mu\left(\bigcup_{k}B_k\setminus E\right)&=\sum_k\mu(B_k\setminus E)\\
		&<\frac{\delta}{1-\delta} \sum_k\mu(B_k\cap E)\le 2\delta\mu(E)\le \varepsilon.
		\end{align*}
		On the other hand 
		\begin{align*}
		\mu\left(E\setminus \bigcup_{k}B_k\right)&=\mu(E)-\sum_k\mu\left(E\cap B_k\right)\\
		&\le \mu(E)-(1-\delta)\sum_k\mu\left(B_k\right)\\
		&\le \mu(E)-\frac{1-\delta}{\ZK}\mu\left(\bigcup_{k}B_k^{[1]}\right)\\
		&\le \mu(E)-\frac{1}{2\ZK}\mu\left(\bigcup_{B\in\ZB}B\right)\\
		&\le \mu(E)-\frac{1}{2\ZK}\mu(E)\\
		&=\left(1-\frac{1}{2\ZK}\right)\mu(E).
		\end{align*}
		So conditions \e {h81} are satisfied with a constant $\alpha=1-1/2\ZK$.
	\end{proof}
	Observe that if B3)-condition holds for the bounded measurable sets, then it holds for all the measurable sets. Indeed, according \lem {L1}, there is a sequence of balls $G_k$ such that $X=\cup_kG_k$. This implies that any measurable set $E$ can be written as a countable union of bounded measurable sets $E=\cup_kE_k$. Apply B3)-condition to each set $E_k$ with an approximation number $\varepsilon/2^k$. The union of all obtained approximating balls will give an $\varepsilon$-approximation of $E$.    
	\begin{lemma}\label{L12}
		Let $(X,\ZM,\mu)$ be a measurable set. If a family of measurable sets $\ZB$ satisfies B4)-condition, then it will  satisfy B3)-condition if and only if the density condition holds.
	\end{lemma}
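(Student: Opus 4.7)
The plan is to prove the two implications separately. The reverse direction is handled by iterating \lem{L20}, and the forward direction by a covering-and-contradiction argument based on \lem{L1-1}.

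For the reverse direction (density $+$ B4 $\Rightarrow$ B3), by the remark preceding the lemma it suffices to verify B3 on bounded measurable sets. Fix bounded measurable $E$ with $\mu(E)>0$ and tolerance $\varepsilon>0$. I apply \lem{L20} to $E$ with outer error $\varepsilon/2$, obtaining balls $\{B_k^{(1)}\}$ with $\mu(\bigcup_k B_k^{(1)}\setminus E)<\varepsilon/2$ and $\mu(E_1)\le\alpha\mu(E)$, where $E_1:=E\setminus\bigcup_k B_k^{(1)}$. Iterating with outer errors $\varepsilon/2^j$ on each successive $E_{j-1}$ (still bounded and measurable) produces a cumulative outer overshoot of at most $\sum_j \varepsilon/2^j<\varepsilon$ and an uncovered remainder $\mu(E_j)\le\alpha^j\mu(E)$ that drops below $\varepsilon$ for $j$ large. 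The union of all the intermediate ball families is the approximation required by B3.

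For the forward direction (B3 $+$ B4 $\Rightarrow$ density), I argue by contradiction. If density fails, there is a measurable $E$ and an $\varepsilon>0$ such that the ``$\varepsilon$-covered'' set
\[
U_\varepsilon=\bigcup\{B\in\ZB:\mu(B\cap E)>(1-\varepsilon)\mu(B)\}
\]
satisfies $\mu^*(E\setminus U_\varepsilon)>0$. Using \lem{L1} I reduce to $E$ bounded, say $E\subset B_0\in\ZB$. Fix $\eta>0$ (to be adjusted) and apply B3 to $E$ to get countable balls $\{A_l\}$ with $\mu(E\bigtriangleup\bigcup_l A_l)<\eta$; discard those $A_l$ that meet $E$ in measure zero. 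Each surviving $A_l$ meets $B_0$ and satisfies $\mu(A_l\setminus E)\le\eta$. Split the indices into the good set $I_g=\{l:\mu(A_l\cap E)>(1-\varepsilon)\mu(A_l)\}$ and the bad set $I_b$. Since good balls lie inside $U_\varepsilon$,
\[
E\setminus U_\varepsilon\subset\bigl(E\setminus\textstyle\bigcup_l A_l\bigr)\cup V,\qquad V:=\bigcup_{l\in I_b}A_l.
\]
For $l\in I_b$, $\varepsilon\mu(A_l)\le\mu(A_l\setminus E)\le\eta$, so $\mu(A_l)\le\eta/\varepsilon$; choosing $\eta\le 2\varepsilon\mu(B_0)$, the two-balls relation forces each bad $A_l\subset B_0^{[1]}$, making $V$ bounded. \lem{L1-1} then extracts a pairwise disjoint subfamily $\{A_{l_k}\}\subset\{A_l\}_{l\in I_b}$ with $V\subset\bigcup_k A_{l_k}^{[1]}$, and disjointness together with B4 give
\[
\mu(V)\le\ZK\sum_k\mu(A_{l_k})\le\frac{\ZK}{\varepsilon}\sum_k\mu(A_{l_k}\setminus E)=\frac{\ZK}{\varepsilon}\mu\Bigl(\bigcup_k A_{l_k}\setminus E\Bigr)\le\frac{\ZK\eta}{\varepsilon}.
\]
Hence $\mu^*(E\setminus U_\varepsilon)\le\eta(1+\ZK/\varepsilon)$, which contradicts $\mu^*(E\setminus U_\varepsilon)>0$ once $\eta$ is chosen sufficiently small.

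The principal obstacle lies in the forward direction: one must simultaneously confine all bad approximating balls inside a common hull $B_0^{[1]}$ (exploiting the a priori measure bound $\mu(A_l)\le\eta/\varepsilon$ and the two-balls relation to preempt \lem{L1-1}'s boundedness hypothesis) and then extract from them a disjoint subfamily whose hulls still cover $V$, so that the local failure $\varepsilon\mu(A_l)\le\mu(A_l\setminus E)$ can be summed into a global estimate with loss only a multiplicative $\ZK$. The rest of the forward argument is essentially bookkeeping of the inclusion $E\setminus U_\varepsilon\subset(E\setminus\bigcup A_l)\cup V$.
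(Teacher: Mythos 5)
Your reverse direction (density $+$ B4 $\Rightarrow$ B3) is the paper's argument exactly: iterate \lem{L20} with geometrically shrinking error budgets and union the resulting ball families.

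The forward direction, however, takes a genuinely different route. The paper applies B3 to the \emph{measurable hull} $\bar F$ of the bad set $F\subset E$ (all points whose every ball fails the density bound), so that every approximating ball is automatically ``bad''; it then extracts a disjoint subfamily (in fact via \lem{L1-1}, not \lem{L20} as cited, since the latter presupposes the density property being proved) and bounds $\mu^*(\bar F)\le\varepsilon(\ZK/\alpha+1)$. You instead apply B3 to $E$ itself, split the approximating balls into a good collection (those already inside $U_\varepsilon$) and a bad collection, confine the bad ones inside $B_0^{[1]}$ via the two-balls relation after noting $\mu(A_l)\le\eta/\varepsilon$, extract a disjoint subfamily of bad balls with \lem{L1-1}, and bound $\mu^*(E\setminus U_\varepsilon)\le\eta(1+\ZK/\varepsilon)$. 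Both exploit the same key numerical estimate -- a bad ball has $\varepsilon\mu(A)\le\mu(A\setminus E)$, and summing this over a disjoint family costs only a factor $\ZK$ from the hull enlargement -- but your version avoids the $\bar F$ bookkeeping entirely and, by first reducing to $E$ bounded via \lem{L1}, makes the boundedness hypothesis of \lem{L1-1} explicit where the paper leaves it implicit. Both arguments are valid; yours is, if anything, a cleaner bookkeeping of the same underlying idea.
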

	\begin{proof}
		Let $\ZB$ satisfy B4) and density conditions and $E$ be a measurable set. The remark stated before the lemma allows us to suppose that $E$ is bounded. Applying \lem {L20} consecutively, we can find sequences of balls $\ZB_k$, $k=1,2,\ldots,$ such that 
		\begin{align}
		&\mu\left(\bigcup_{B\in\ZB_k}B\setminus E\right)<\frac{\varepsilon}{2^k},\quad k\ge 1,\label{h83}\\
		&\mu\left(E\setminus \bigcup_{B\in\cup_{j=1}^{k}\ZB_j}B\right)<\alpha \mu\left(E\setminus \bigcup_{B\in\cup_{j=1}^{k-1}\ZB_j}B\right),\quad k\ge 1,\label{h82}
		\end{align} 
		where in the case $k=1$ the right hand side of \e {h82} is assumed to be $E$. Then we denote $\ZB=\cup_{j=1}^{\infty }\ZB_j $. From \e {h82} it easily follows that
		\begin{equation*}
		E\subset \bigcup_{B\in\ZB}B\text { a.s.},
		\end{equation*} 
		while from \e {h83} we obtain
		\begin{equation*}
		\mu\left(\left(\bigcup_{B\in\ZB}B\right)\setminus E\right)\le \sum_{k=1}^\infty\mu\left(\left(\bigcup_{B\in\ZB_k}B\right)\setminus E\right)<\varepsilon.
		\end{equation*}
		To prove the second part of lemma let $\ZB$ satisfy B4) and B3) conditions. Suppose to the contrary $\ZB$ does not have the density property. That is, there exists a number $\alpha$, $0<\alpha<1$, a set $E\in \ZM$ together with its subset $F\subset E$, $\mu^*(F)>0$, such that for any $x\in F$ and $B\in\ZB$ with $x\in B$ we have
		\begin{equation}\label{z38}
		\mu(B\setminus E)>\alpha\mu(B).
		\end{equation}
		By B3)-condition for any $\varepsilon >0$ it can be found a sequence of balls $B_k$, $k=1,2,\ldots $, such that
		\begin{equation}\label{z42}	
		\mu\left(\bar F\bigtriangleup \bigcup_k B_k\right)<\varepsilon.
		\end{equation}
		We can suppose that $\mu(B_k\cap \bar F)>0$ for each $B_k$. Observe that it implies $B_k\cap F\neq\varnothing$. Indeed, suppose to the contrary $B_{k_0}\cap F=\varnothing$. Then we get $F\subset \bar F\setminus B_{k_0}$ and so a contradiction $\mu^*(F)\le \mu^*(\bar F\setminus B_{k_0})<\mu^*(\bar F)$. Thus by \e {z38} we get
		\begin{equation}\label{z41}
		\mu(B_k\setminus \bar F)\ge \mu(B_k\setminus E)>\alpha \mu(B_k),\quad k=1,2,\ldots.
		\end{equation}
		Applying \lem {L20}, we find a subsequence of pairwise disjoint balls $\tilde B_k$, $k=1,2,\ldots$, such that
		\begin{equation*}
		\mu\left(\bar F\setminus\bigcup_k\tilde B_k^{[1]}\right)<\varepsilon.
		\end{equation*} 
		Thus, applying B4)-condition, \e {z42} and \e {z41}, we obtain
		\begin{align*}
		\mu^*(\bar F)&\le \mu\left(\bigcup_k\tilde B_k^{[1]}\right)+\varepsilon\le \ZK\sum_k\mu(\tilde B_k)+\varepsilon\le 
		\frac{\ZK}{\alpha}\sum_k\mu(\tilde B_k\setminus \bar F)+\varepsilon\\
		&= \frac{\ZK}{\alpha}\mu\left(\bar F\bigtriangleup \bigcup_k B_k\right)+\varepsilon<\varepsilon\left(\frac{\ZK}{\alpha}+1\right).
		\end{align*}
		Choosing enough small $\varepsilon $, we get $\mu^*(F)=\mu^*(\bar F)=0$ and so a contradiction.

	\end{proof}
	We say a measurable set $E$ is almost surely subset of a measurable set $F$ if $\mu(E\setminus F)=0$. We denote this relation by
	\begin{equation*}
	E\subset F\text{ a.s.. }
	\end{equation*}
	\begin{lemma}\label{L3}
		For any bounded measurable set $E\in \ZM$ there exists a sequence of balls $B_k$, $k=1,2,\ldots,$ such that
		\begin{align}
		&E\subset \bigcup_{k}B_k\text { a.s. }\label{a91}\\
		&\sum_k\mu(B_k)\le 2\ZK\mu(E).\label{a92}
		\end{align}	
	\end{lemma}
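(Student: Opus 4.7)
The plan is to combine the density property with the Vitali-type selection in \lem{L1-1} and then pass to hulls. Since $\ZB$ is a ball-basis, it satisfies B3) and B4), so by \lem{L12} the density property holds. Let $E'\subset E$ denote the set of density points of $E$; then $E'$ is bounded (because $E$ is) and $\mu(E\setminus E')=0$.

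Next, for every $x\in E'$ I would apply the density condition at $x$ with the specific parameter $\varepsilon=1/2$ to choose a ball $G_x\ni x$ satisfying $\mu(G_x\cap E)>\tfrac12\mu(G_x)$. The family $\ZG=\{G_x:x\in E'\}$ covers $E'$, so \lem{L1-1} applied to the bounded set $E'$ and the covering $\ZG$ extracts a pairwise disjoint subsequence $G_k\in\ZG$, $k=1,2,\ldots$, with $E'\subset\bigcup_k G_k^{[1]}$. Setting $B_k:=G_k^{[1]}$ and using $\mu(E\setminus E')=0$ then yields the inclusion \e{a91}.

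For the measure bound \e{a92} I would chain three estimates: the hull condition \e{h13} gives $\mu(B_k)=\mu(G_k^{[1]})\le\ZK\mu(G_k)$; the choice of $G_k$ gives $\mu(G_k)<2\mu(G_k\cap E)$; and the disjointness of the $G_k$ together with $G_k\cap E\subset E$ gives $\sum_k\mu(G_k\cap E)\le\mu(E)$. Combining these yields $\sum_k\mu(B_k)\le 2\ZK\mu(E)$, as required.

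I do not anticipate a real obstacle: all the ingredients have been assembled in the preceding results, and the choice $\varepsilon=1/2$ is calibrated precisely to convert the factor $\ZK$ from \e{h13} into the $2\ZK$ of \e{a92}. The only subtlety is to pass through the density points $E'$, since \lem{L1-1} demands a genuine covering rather than an almost sure one; the equality $\mu(E\setminus E')=0$ transfers the inclusion back to $E$ in the a.s. sense required by \e{a91}.
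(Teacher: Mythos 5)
Your proof is correct. It shares the paper's skeleton -- cover $E$ by balls, extract a pairwise disjoint subfamily via \lem{L1-1}, and pass to hulls using \e{h13} -- but it sources the measure control differently. The paper applies B3) (iterated) to produce a countable cover $\zA$ of $E$ with $\mu\bigl(\bigcup_{A\in\zA}A\bigr)<(1+\varepsilon)\mu(E)$, so that after disjointification $\sum_j\mu(A_j)=\mu\bigl(\bigcup_jA_j\bigr)<2\mu(E)$; you instead invoke the density property (legitimately available through \lem{L12}, whose relevant direction does not rely on \lem{L3}, so there is no circularity) to make each ball individually efficient, $\mu(G_k)<2\mu(G_k\cap E)$, and then sum over the disjoint sets $G_k\cap E\subset E$. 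Your route spares the small iteration needed to upgrade B3)'s symmetric-difference bound to an almost-sure cover of controlled total measure, at the cost of routing through \lem{L12}; it also quietly relies on the intended reading of \df{density} (the ball $B$ must contain $x$, which the paper's wording omits but clearly means), and on the observation that \lem{L1-1} does not require the covered set $E'$ to be measurable. Both arguments deliver the same constant $2\ZK$.
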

	\begin{proof}
		Then observe that applying B3)-condition consecutively, for a given measurable set $E$ and $\varepsilon>0$ one can find a countable family of balls $\zA$ such that 
		\begin{align}
		&E\subset  \bigcup_{A\in \zA}A\text { a.s. }\label{h107}\\
		&\mu\left(\bigcup_{A\in \zA}A\right)<(1+\varepsilon)\mu(E).\label{a96}
		\end{align}
		Applying \lem {L1-1}, we find a pairwise disjoint collection $\{A_j\}\subset \zA$ such that 
		\begin{equation*}
		E\subset \bigcup_{j}{A_j}^{[1]}\text { a.s. }.
		\end{equation*} 
		The B4)-condition and \e {a96} yield
		\begin{align*}
		\sum_{j}\mu\left({A_j}^{[1]}\right)&\le \ZK\sum_{j}\mu\left(A_j\right)=\ZK\mu\left(\bigcup_{j}A_j\right)\\
		&\le \ZK\mu\left(\bigcup_{A\in \zA}A\right)\\
		&<2\ZK\mu(E).
		\end{align*}
		Define $B_k=A_k^{[1]}$, one can easily check that \e {a91} and \e {a92} are satisfied.  
	\end{proof}

	We denote by $\# A$ the cardinality of a finite set $A$.
	\begin{lemma}\label{L1-2} Let $A\in \ZB$ and $\ZG$ be a family of pairwise disjoint balls such that each $G\in \ZG$ satisfies the relations 
		\begin{align}
		&G^{[1]}\cap A \neq \varnothing,\label{a36}\\
		& 0<c_1\le \mu(G)\le c_2,\label{a41}
		\end{align}
		with some positive constants $c_1,c_2$. Then the number of elements in $\ZG$ is finite and satisfies the bound
		\begin{equation*}
		\#\ZG\le \frac{\min\{\ZK^3c_2,\ZK\mu(A)\}}{c_1}.
		\end{equation*}
	\end{lemma}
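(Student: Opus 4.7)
The plan is to dichotomize $\ZG$ according to the two balls relation applied to each pair $(G^{[1]},A)$, and then bound the two resulting subfamilies $\ZG_1,\ZG_2$ by disjointness-plus-measure arguments of the same flavor as in \lem{L1-1} and \lem{L1}. For each $G\in\ZG$ the intersection $G^{[1]}\cap A\neq\varnothing$ forces exactly one of the alternatives $\mu(G^{[1]})\le 2\mu(A)$ (in which case $G\subset G^{[1]}\subset A^{[1]}$ by two balls) or $\mu(G^{[1]})>2\mu(A)$ (in which case $\mu(A)\le 2\mu(G^{[1]})$, so $A\subset(G^{[1]})^*=G^{[2]}$). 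I would put $G$ into $\ZG_1$ or $\ZG_2$ accordingly.

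For $\ZG_1$ the work is straightforward: the elements are pairwise disjoint subsets of $A^{[1]}$, and so summing their measures against $\mu(A^{[1]})\le\ZK\mu(A)$ produces $\#\ZG_1\le\ZK\mu(A)/c_1$.

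For $\ZG_2$ I would fix a representative $G_0\in\ZG_2$ chosen to nearly maximize $\mu(G_0^{[2]})$, say $\mu(G_0^{[2]})\ge\tfrac12\sup_{G\in\ZG_2}\mu(G^{[2]})$ (the supremum is finite because it is bounded by $\ZK^2c_2$). Since every $G\in\ZG_2$ satisfies $A\subset G^{[2]}\cap G_0^{[2]}$ and $\mu(G^{[2]})\le 2\mu(G_0^{[2]})$, a second application of the two balls relation at the level of the second hulls yields $G^{[2]}\subset(G_0^{[2]})^*=G_0^{[3]}$, whence $G\subset G_0^{[3]}$. Disjointness together with $\mu(G_0^{[3]})\le\ZK^3\mu(G_0)\le\ZK^3c_2$ then gives $\#\ZG_2\le\ZK^3c_2/c_1$. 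Combined with the bound on $\#\ZG_1$, this establishes the finiteness of $\ZG$ and both halves of the claimed estimate.

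The hard part is the $\ZG_2$ step, where the main subtlety is to choose $G_0$ to nearly maximize $\mu(G_0^{[2]})$ rather than $\mu(G_0)$: the two balls relation must be invoked at the $[2]$-hull level to produce containment in $G_0^{[3]}$, and the naive maximization of $\mu(G_0)$ would not provide the comparability $\mu(G^{[2]})\le 2\mu(G_0^{[2]})$ needed to close the argument.
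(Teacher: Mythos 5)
Your two-balls manipulations are all valid, and your route is essentially a reorganization of the paper's argument. The paper takes an arbitrary finite subfamily $G_1,\dots,G_N\subset\ZG$, orders it so that $G_1$ has the largest first hull, and splits into the two exclusive cases $\mu(A)\ge\mu(G_1^{[1]})$ (then every $G_j^{[1]}\subset A^{[1]}$, giving $N\le\ZK\mu(A)/c_1$) and $\mu(A)<\mu(G_1^{[1]})$ (then $A\subset G_1^{[2]}$, hence every $G_j^{[1]}$ meets $G_1^{[2]}$ and is contained in $G_1^{[3]}$, giving $N\le\ZK^3c_2/c_1$). You instead partition the family itself according to whether $\mu(G^{[1]})\le 2\mu(A)$, and in the second piece you near-maximize $\mu(G_0^{[2]})$ and use $A\subset G^{[2]}\cap G_0^{[2]}$ to get $G\subset G_0^{[3]}$; this is the same mechanism applied one hull level higher, and your observation that one must maximize a hull measure rather than $\mu(G_0)$ matches what the paper does (it maximizes $\mu(G^{[1]})$). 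Both arguments are correct as far as they go.

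The one place your write-up overreaches is the last sentence: your decomposition yields $\#\ZG\le(\ZK\mu(A)+\ZK^3c_2)/c_1$, the \emph{sum} of the two bounds, not ``both halves''; neither bound holds separately for the whole family on the basis of your splitting. However, you should know that the bound as stated, with $\min$, cannot be correct in general: in $\ZR$ with intervals, take $A$ of very large measure and $\ZG$ a large collection of pairwise disjoint unit intervals inside $A$; then $\#\ZG$ grows with $\mu(A)$ while $\min\{\ZK^3c_2,\ZK\mu(A)\}=\ZK^3c_2$ stays bounded. The paper's own proof also only establishes the case-dependent bounds, i.e.\ an estimate of $\max$ (or sum) type, and that is all that is used later: in the application inside the proof of Theorem \ref{T1} one has $c_1\sim c_2\sim\mu(A)$ up to admissible factors, so your sum bound gives $\#\ZG\lesssim1$ exactly as the paper's version does. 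So, apart from the unjustified claim that you recover the $\min$, your proof is sound and of the same strength as the paper's.
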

	\begin{remark}
		One can easily check that this lemma implies a similar lemma with the condition $G^{[2]}\cap A \neq \varnothing$ instead of \e {a36}.
	\end{remark}
	\begin{proof} Suppose $G_1,G_2,\ldots, G_N$ are some elements from $\ZG$. We can assume that 
		\begin{equation}\label{a42}
		\mu(G_1^{[1]})\ge \mu(G_i^{[1]})
		\end{equation}
		for each $1\le j\le N$. If $\mu(A)\ge \mu(G_1^{[1]})$, then from \e {a36} and B4)-condition we get 
		\begin{equation*}
		\bigcup_{1\le j\le N} G_k\subset \bigcup_{1\le j\le N}G_j^{[1]}\subset A^{[1]}.
		\end{equation*}
		Thus, since $G_k$ are pairwise disjoint, from \e {a41} we obtain
		\begin{equation*}
		N\cdot c_1\le \mu\left(\bigcup_{1\le j\le N} G_k\right)\le \mu(A^{[1]})\le \ZK\mu(A)
		\end{equation*}
		that is 
		\begin{equation}\label{a45}
		N\le \frac{\ZK\mu(A)}{c_1}.
		\end{equation}
		In the case $\mu(A)< \mu(G_1^{[1]})$ we get $A\subset G_1^{[2]}$ and therefore by \e {a36},
		\begin{equation*}
		G_j^{[1]}\cap G_1^{[2]}\neq\varnothing,\quad 1\le j\le N.
		\end{equation*}
		Thus, applying two balls relation and \e {a42}, we obtain
		\begin{equation*}
		\bigcup_{1\le j\le N}G_j^{[1]}\subset G_1^{[3]}.
		\end{equation*}
		Then, again using \e {a41} and \e {a33}, we get
		\begin{equation}\label{a94}
		N\cdot c_1\le \mu\left(\bigcup_k G_k\right)\le \mu(G_1^{[3]})\le \ZK^3\mu(G_1)\le \ZK^3 c_2.
		\end{equation}
		Combination of \e {a45} and \e {a94} completes the proof of lemma.
	\end{proof}
	\section {Preliminary properties of bounded oscillation operators}
	In this section we derive some preliminary properties of $\BO$ operators. Let $(X,\ZM,\mu)$ be a measure space with a ball-basis $\ZB$ and $1\le r<\infty$. We will need weak-$L^r$ inequality of the maximal operator 
	\begin{equation}\label{1-1}
	M_r f(x)=\sup_{B\in \ZB:\, x\in B} \left(\frac{1}{\mu(B)}\int_B|f(t)|^rd\mu(t)\right)^{1/r}
	\end{equation}
	associated with a ball-basis $\ZB$.  The maximal operator corresponding to $r=1$ will be denoted by $M$.
	\begin{theorem}\label{T1-1}
		The maximal operator \e {1-1} satisfies weak-$L^r$ inequality. Moreover, we have 
		\begin{equation}\label{h38}
		\|M_r\|_{L^r\to L^{r,\infty }}\le \ZK^{1/r}.
		\end{equation} 
	\end{theorem}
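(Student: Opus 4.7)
The plan is to adapt the greedy Vitali-type argument from the proof of \lem{L1-1} to the (possibly unbounded) superlevel set of $M_rf$. Fix $t>0$ and introduce the collection
$$\zA=\{B\in\ZB:\langle f\rangle_{B,r}>t\}.$$
By the definition of $M_r$, the set $G_{M_rf}(t)$ is exactly $\bigcup_{B\in\zA}B$. For every $B\in\zA$ the inequality $t^r\mu(B)<\int_B|f|^r\le\|f\|_{L^r}^r$ gives the uniform bound $\sup_{B\in\zA}\mu(B)\le t^{-r}\|f\|_{L^r}^r<\infty$; this is what replaces the ambient bounded ball in the hypothesis of \lem{L1-1}.

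I would next run the half-max greedy selection: pick $G_1\in\zA$ with $\mu(G_1)>\tfrac12\sup_{B\in\zA}\mu(B)$, and having chosen $G_1,\dots,G_k$, pick $G_{k+1}\in\zA$ disjoint from all of them with $\mu(G_{k+1})>\tfrac12\sup\{\mu(B):B\in\zA,\ B\cap G_j=\varnothing,\ j\le k\}$, terminating if no such ball exists. The balls $G_k$ are pairwise disjoint and each satisfies $\mu(G_k)<t^{-r}\int_{G_k}|f|^r$, so summing gives $\sum_k\mu(G_k)\le t^{-r}\|f\|_{L^r}^r$; in particular $\mu(G_k)\to 0$ in the infinite case.

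For the covering property, let $B\in\zA$ and let $j^*$ be the smallest index with $B\cap G_{j^*}\neq\varnothing$. Such $j^*$ must exist: otherwise $B$ would be a valid candidate at every stage, forcing $\mu(G_k)>\mu(B)/2>0$ for all $k$ and contradicting either the termination of the procedure or the relation $\mu(G_k)\to 0$. At step $j^*$ the ball $B$ is still a valid candidate, hence $\mu(B)<2\mu(G_{j^*})$ by the greedy choice, and the two balls relation yields $B\subset G_{j^*}^{[1]}$. Thus $G_{M_rf}(t)\subset\bigcup_kG_k^{[1]}$, and combining this inclusion with \e{h13} and the disjointness of the $G_k$,
$$\lambda_{M_rf}(t)=\mu^*(G_{M_rf}(t))\le\sum_k\mu(G_k^{[1]})\le\ZK\sum_k\mu(G_k)\le\ZK\,t^{-r}\|f\|_{L^r}^r,$$
which rearranges to \e{h38}. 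The main delicate point — and the only place the argument goes beyond \lem{L1-1} — is the existence of $j^*$ in the unbounded setting; the summability $\sum_k\mu(G_k)<\infty$ plays exactly the role of the enclosing ball used in the bounded case.
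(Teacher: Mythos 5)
Your proof is correct, and the key structural idea is genuinely different from the paper's. The paper handles the possibly unbounded superlevel set $E=\{M_rf>\lambda\}$ in two steps: it first localizes to $E\cap B$ for a fixed ball $B$ so that \lem{L1-1} (which requires the covered set to be contained in a single ball) applies, obtaining a cover $Q(B)$ with $\mu(Q(B))\le\ZK\lambda^{-r}\|f\|_{L^r}^r$; it then invokes \lem{L1} to exhaust $X$ by an increasing family of hulls and passes to $\mu^*(E)$ by a limit. You bypass both steps by noticing that the covering family $\zA$ carries an automatic uniform measure bound $\mu(B)\le t^{-r}\|f\|_{L^r}^r$, and that this finiteness — rather than boundedness of $E$ — is what makes the half-max greedy selection start, while the summability $\sum_k\mu(G_k)\le t^{-r}\|f\|_{L^r}^r$ (a consequence of disjointness and the definition of $\zA$, available here but not in the abstract setting of \lem{L1-1}) gives $\mu(G_k)\to 0$ and hence the existence of the first intersecting index $j^*$. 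This lets you cover all of $G_{M_rf}(t)$ in a single pass and read off $\lambda_{M_rf}(t)\le\ZK t^{-r}\|f\|_{L^r}^r$ directly, with the same constant $\ZK^{1/r}$. One further small improvement: your choice of $j^*$ as the least index with $B\cap G_{j^*}\neq\varnothing$, together with the greedy inequality at step $j^*$, gives the bound $\mu(B)<2\mu(G_{j^*})$ in a self-contained way, which is cleaner than the corresponding step in the proof of \lem{L1-1} (where $m$ is chosen by a measure comparison and the justification that $G\cap G_j\neq\varnothing$ for some $j\le m$ is stated rather tersely).
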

	\begin{proof}[Proof of \trm{T1-1}] 
		Denote 
		\begin{equation*}
		E=\{ x\in X:\, M_r f(x)>\lambda\}.
		\end{equation*}
		Note that $E$ can be non-measurable. For any $x\in  E $ there exists a ball $B(x)\subset X$ such that
		\begin{equation*}
		x\in B(x),\quad \frac{1}{\mu(B(x))}\int_{B(x)}|f|^r>\lambda^r.
		\end{equation*}
		We have $E=\cup_{x\in E} B(x)$. Given $B\in\ZB$ consider the collection of balls $\{B(x):\, x\in E\cap B\}$. Applying \lem{L1-1}, we find a sequence of pairwise disjoint balls $\{B_k\}$ taken from this collection such that 
		\begin{equation*}
		E \cap B\subset \bigcup_kB_k^{[1]}=Q(B).
		\end{equation*}
		We have $Q(B)$ is measurable and
		\begin{align}
		\mu(Q(B))	&\le \sum_k \mu(B^{[1]}_k)\label{a47}\\
		&\le \ZK \sum_k \mu(B_k)\nonumber\\
		&\le \frac{\ZK}{\lambda^r}\sum_k \int_{B_k}|f(t)|^rdt\nonumber\\
		&\le \frac{\ZK}{\lambda^r}\int_{X }|f(t)|^rdt.\nonumber
		\end{align}
		According to \lem {L1} there is a sequence of balls $G_k$ such that
		\begin{equation*}
		X=\bigcup_{n\ge 1}\bigcap_{k\ge n}G_k,
		\end{equation*}
		and so we get
		\begin{equation*}
		E\subset \bigcup_{n\ge 1}\bigcap_{k\ge n}Q(G_k).
		\end{equation*}
		Hence we obtain
		\begin{equation*}
		\mu^*(E)=\mu \left(\bigcup_{n\ge 1}\bigcap_{k\ge n}Q(G_k)\right)\le \frac{\ZK}{\lambda^r}\int_{X }|f(t)|^rdt
		\end{equation*}
		and so \e {h38}. 
	\end{proof}

	\begin{theorem}\label{T5-1}
		If a subadditive operator $T$ satisfies T1)-condition and the weak-$L^r$ inequality, then $T^*$ satisfies weak-$L^r$ inequality too. Moreover, we have
		\begin{equation*}
		\|T^*\|_{L^r\to L^{r,\infty}}\lesssim \ZL_1+\|T\|_{L^r\to L^{r,\infty}}.
		\end{equation*}
	\end{theorem}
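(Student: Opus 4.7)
The plan is a two-scale weak-type argument on level sets. Fix $f\in L^r(X)$ and $\lambda>0$ and set $E_\lambda=\{x\in X:T^*f(x)>\lambda\}$. By the definition of $T^*$, to every $x\in E_\lambda$ I can attach a ball $B_x\ni x$ with $|T(f\cdot\ZI_{X\setminus B_x^*})(x)|>\lambda$. I will show that each such $B_x$ has at least half of its measure lying in one fixed measurable set $F$ whose outer measure is controlled by the weak-$L^r$ norms of $T$ and of $M_r$ (Theorem \ref{T1-1}). Once this is done, $M\ZI_F(x)\ge 1/2$ for every $x\in\bigcup_x B_x\supset E_\lambda$, so $E_\lambda\subset\{M\ZI_F>1/4\}$ and the weak-$(1,1)$ bound for $M$ finishes the proof.

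Choose a parameter $c>0$ small enough that $\ZL_1 c\le 1/2$ and $4^r\|T\|_{L^r\to L^{r,\infty}}^r\ZK c^r\le 1/2$; then $1/c\lesssim\ZL_1+\|T\|_{L^r\to L^{r,\infty}}$. Split the collection $\{B_x\}$ according to whether $\langle f\rangle^*_{B_x,r}>c\lambda$ or $\langle f\rangle^*_{B_x,r}\le c\lambda$. In the first case pick $A\supset B_x$ with $\langle f\rangle_{A,r}>c\lambda/2$; since $A$ itself witnesses $M_rf(y)>c\lambda/2$ for every $y\in A$, we have $B_x\subset A\subset\overline{\{M_rf>c\lambda/2\}}$. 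In the second case T1) and the subadditivity of $T$ give, for every $x'\in B_x$,
\begin{equation*}
|T(f\cdot\ZI_{X\setminus B_x^*})(x')|\ge|T(f\cdot\ZI_{X\setminus B_x^*})(x)|-\ZL_1\langle f\rangle^*_{B_x,r}>\lambda-\ZL_1c\lambda\ge\lambda/2,
\end{equation*}
so $|Tf(x')|+|T(f\cdot\ZI_{B_x^*})(x')|>\lambda/2$ and $B_x\subset\{|Tf|>\lambda/4\}\cup\{|T(f\cdot\ZI_{B_x^*})|>\lambda/4\}$. Since $\int_{B_x^*}|f|^r\le\ZK\mu(B_x)(c\lambda)^r$ by the choice of case and B4), the weak-$L^r$ bound of $T$ yields
\begin{equation*}
\mu^*\{|T(f\cdot\ZI_{B_x^*})|>\lambda/4\}\le 4^r\|T\|_{L^r\to L^{r,\infty}}^r\ZK c^r\,\mu(B_x)\le\mu(B_x)/2,
\end{equation*}
and outer subadditivity forces $\mu^*(B_x\cap\{|Tf|>\lambda/4\})\ge\mu(B_x)/2$.

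In both cases $\mu(B_x\cap F)\ge\mu(B_x)/2$ with the measurable set $F=\overline{\{M_rf>c\lambda/2\}}\cup\overline{\{|Tf|>\lambda/4\}}$; hence $E_\lambda\subset\{M\ZI_F>1/4\}$. Combining Theorem \ref{T1-1} for $M$ with the weak-$L^r$ bounds on $M_r$ and on $T$ gives
\begin{equation*}
\mu^*(E_\lambda)\lesssim\mu(F)\lesssim\Bigl(\frac{1}{c^r}+\|T\|_{L^r\to L^{r,\infty}}^r\Bigr)\frac{\|f\|_{L^r}^r}{\lambda^r}\lesssim\frac{(\ZL_1+\|T\|_{L^r\to L^{r,\infty}})^r\|f\|_{L^r}^r}{\lambda^r},
\end{equation*}
which is exactly the asserted estimate. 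The main nuisance is measurability: $T^*f$, $Tf$ and $M_rf$ are not a priori $\ZM$-measurable, so the whole argument has to be carried out with $\mu^*$ and with the measurable hulls $\overline{\{\cdot\}}$ introduced in Section 2. The only other delicate point is the simultaneous calibration of $c$: it must be small enough both to absorb the oscillation constant $\ZL_1$ through T1) and to make the weak-$L^r$ estimate for the local piece $T(f\cdot\ZI_{B_x^*})$ beat $\mu(B_x)$, which is what forces the dependence $1/c\sim\ZL_1+\|T\|_{L^r\to L^{r,\infty}}$ and thus the linear dependence in the conclusion.
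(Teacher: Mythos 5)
Your argument is correct, and its analytic core coincides with the paper's: use T1) to propagate the largeness of $|T(f\cdot\ZI_{X\setminus B_x^*})|$ from the witnessing point to all of $B_x$, split off the local piece $T(f\cdot\ZI_{B_x^*})$ and beat it on at least half of $B_x$ using the weak-$L^r$ bound of $T$ together with $\mu(B_x^*)\le\ZK\mu(B_x)$, and absorb the averages $\langle f\rangle^*_{B_x,r}$ into $M_rf$. Your calibration of $c$ plays exactly the role of the paper's parameters $\beta\sim\|T\|_{L^r\to L^{r,\infty}}$ and $\delta\sim(\ZL_1+\beta)^{-1}$, and it yields the same linear constant. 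Where you genuinely diverge is the final counting step. The paper localizes $E_\lambda$ to a ball, runs the Vitali-type selection of \lem{L1-1} on the family $\{B_x\}$ to extract pairwise disjoint balls $B_k$ with $E_\lambda\cap B\subset\bigcup_k B_k^{[1]}$, sums $\mu(B_k)\le 2\mu^*(\tilde B_k)$ over the disjoint family, and then exhausts $X$ via \lem{L1}. You instead show that every $B_x$ has density at least $1/2$ in the single controlled set $F=\overline{\{M_rf>c\lambda/2\}}\cup\overline{\{|Tf|>\lambda/4\}}$, conclude $E_\lambda\subset\{M\ZI_F>1/4\}$, and invoke the already-proved weak-$(1,1)$ bound of \trm{T1-1} applied to $\ZI_F$. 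This Cotlar-type shortcut avoids both the disjointification and the exhaustion of $X$, at the modest price of a second appeal to the maximal theorem (once for $M_rf$, once for $M\ZI_F$); the paper's route is more self-contained in that it reuses the covering lemmas directly. The measurability issues you flag are real but are handled exactly as in the paper, by working with $\mu^*$ and with measurable hulls, since $F$ belongs to $\bar{\ZM}$ and may be replaced by an equivalent measurable set.
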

	\begin{proof}
		Given $\lambda>0$ consider the set
		\begin{equation*}
		E =\{x\in X:\,T^*f(x)>\lambda \},
		\end{equation*}
		which can be non-measurable. For any $x\in E$ there is a ball $B(x)\in\ZB$ such that 
		\begin{equation}\label{h27}
		x\in B(x),\quad |T(f\cdot \ZI_{X\setminus B^{[1]}(x)})(x)|>\lambda.
		\end{equation}
		One can check that $E=\cup_{x\in E}B(x)$. Given ball $B$ apply \lem {L1-1}, we find a sequence $x_k\in E$ such that the balls $\{B_k=B(x_k)\}$ are pairwise disjoint and 
		\begin{equation}\label{h30}
		E\cap B \subset \bigcup_kB^{[1]}_k=Q(B).
		\end{equation}
		Since $T$ satisfies T1)-condition, we  have 
		\begin{equation*}
		|T(f\cdot \ZI_{X\setminus B^{[1]}_k})(x_k)-T(f\cdot \ZI_{X\setminus B^{[1]}_k})(x)|\le \ZL_1\cdot \langle f \rangle^*_{B_k,r},\quad x\in B_k. 
		\end{equation*}
		Thus, one can easily conclude from \e {h27} that
		\begin{align}
		|T(f\cdot \ZI_{X\setminus B^{[1]}_k})(x)|&\ge |T(f\cdot \ZI_{X\setminus B^{[1]}_k})(x_k)|\label{h29}\\
		&\qquad -|T(f\cdot \ZI_{X\setminus B^{[1]}_k})(x_k)-T(f\cdot \ZI_{X\setminus B^{[1]}_k})(x)|\nonumber \\
		&\ge \lambda-\ZL_1\cdot\langle f \rangle^*_{B_k,r},\quad x\in B_k.\nonumber
		\end{align}
		Given $\beta>0$ define 
		\begin{equation}\label{h28}
		\tilde B_k= \{x\in B_k:\, |T(f\cdot \ZI_{B_k^{[1]}})(x)|<\beta\cdot \langle f \rangle^*_{B_k,r}\}.
		\end{equation} 
		Using weak-$L^r$ inequality of the operator $T$, the measure of the complement of $\tilde B_k$ is estimated by
		\begin{align*}
		\mu^*(\tilde B_k^c)&\le \frac{ \|T\|_{L^r\to L^{r,\infty}}^r}{(\beta \cdot \langle f \rangle^*_{B_k,r})^r}\cdot \int_{B_k^{(1)}}|f|^r\le \left(\frac{\|T\|_{L^r\to L^{r,\infty}}}{\beta}\right)^r\mu(B_k^{(1)}) \\
		&\lesssim \left(\frac{\|T\|_{L^r\to L^{r,\infty}}}{\beta}\right)^r\mu(B_k) 
		\end{align*}
		and so for an appropriate constant $\beta\sim \|T\|_{L^r\to L^{r,\infty}}$ we have
		\begin{equation*}
		\mu^*(\tilde B_k)\ge\mu(B_k)-\mu^*(B_k\setminus \tilde B_k) \ge\mu(B_k)-\mu^*(\tilde B_k^c) \ge \frac{1}{2}\mu(B_k).
		\end{equation*}
		If
		\begin{equation*}
		x\in \tilde B_k\setminus \{M_rf(x)>\delta\lambda\},
		\end{equation*}
		then, using subadditivity of $T$ together with relations \e {h28}, \e {h29}, we obtain
		\begin{align*}
		|Tf(x)|&\ge |T(f\cdot \ZI_{X\setminus B_k^{[1]}})(x)|-|T(f\cdot \ZI_{B_k^{[1]}})(x)|\\
		&\ge \lambda -\ZL_1\cdot \langle f \rangle^*_{B_k,r}-\beta\cdot \langle f \rangle^*_{B_k,r}\\
		&\ge \lambda -(\ZL_1+\beta)\cdot M_rf(x)\\
		&\ge \lambda -(\ZL_1+\beta)\delta\lambda\\
		&\ge \lambda/2,   	
		\end{align*} 
		where the last inequality can be satisfied for 
		\begin{equation}\label{z43}
		\delta= 1/2(\ZL_1+\beta)\sim (\ZL_1+\|T\|_{L^r\to L^{r,\infty}})^{-1}.
		\end{equation}
		Hence we conclude
		\begin{equation}\label{h32}
		\bigcup_k\tilde B_k\subset \{M_rf(x)>\delta\lambda\}\bigcup \{|Tf(x)|>\lambda/2\}.
		\end{equation}
		Since the maximal function $M_r$ and the operator $T$ satisfy weak-$L^r$ bound, from \e {h30}, \e {z43} and \e {h32} we get
		\begin{align*}
		\mu(Q(B))&\le \sum_k\mu(B_k^{[1]})\\
		&\le \ZK\cdot \sum_k\mu(B_k)\\
		&\le 2\ZK\cdot  \sum_k\mu^*(\tilde B_k)\\
		&\lesssim  \mu^*\{M_rf(x)>\delta\lambda\}+\mu^*\{|Tf(x)|>\lambda/2\}\\
		&\lesssim (\ZL_1+\|T\|_{L^r\to L^{r,\infty}})^r\frac{1}{\lambda^r}\int_X|f|^r.
		\end{align*}
		The same argument used at the end of the proof of \trm {T1-1} implies
		\begin{equation*}
		\mu^*(E)\lesssim (\ZL_1+\|T\|_{L^r\to L^{r,\infty}})^r\frac{1}{\lambda^r}\int_X|f|^r,
		\end{equation*}
		and so the theorem is proved.
	\end{proof}
	\begin{definition}
		Let $T$ be a subadditive operator. Given balls $A$ and $B$ with $A\subset B$ we denote
		\begin{equation*}
		\Delta(A,B)=\Delta_T(A,B)=\sup_{x\in A,\, f\in L^r(X)}\frac{|T(f\cdot \ZI_{B^{[1]}\setminus A^{[1]}})(x)|}{\langle f\rangle_{B^{[1]},r}}.
		\end{equation*}
	\end{definition}
	Notice that T2)-condition for a subadditive operator $T$ means that for any $A\in \ZB$ there exists a ball $B$ such that $A\subsetneq B$ and $\Delta(A,B)\le \ZL_2$. 
	\begin{lemma}\label{L19}
		If $T$ is an arbitrary subadditive operator, then for any balls $A,B$ and $C$ satisfying $A\subset B\subset C$ we have
		\begin{equation}\label{a63}
		\Delta(A,B)\le\Delta(A,C).
		\end{equation}
	\end{lemma}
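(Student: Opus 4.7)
The plan is to unpack the definition of $\Delta$ and feed the test configurations for $\Delta(A,B)$ directly into the defining supremum for $\Delta(A,C)$. Fixing $x\in A$ and $f\in L^r(X)$ with $\langle f\rangle_{B^{[1]},r}>0$, the goal is to prove
\[
|T(f\cdot \ZI_{B^{[1]}\setminus A^{[1]}})(x)|\le \Delta(A,C)\cdot\langle f\rangle_{B^{[1]},r},
\]
so that taking suprema over $x\in A$ and $f\in L^r(X)$ yields \e{a63}.

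First I would replace $f$ by $f\cdot \ZI_{B^{[1]}}$: this leaves the numerator unchanged (because of the factor $\ZI_{B^{[1]}\setminus A^{[1]}}$) and does not increase $\langle f\rangle_{B^{[1]},r}$, so we may assume $\supp f\subset B^{[1]}$. Next, from $B\subset C$ and B4) I would deduce the containment $B^{[1]}\subset C^{[1]}$, which yields the set inclusion $B^{[1]}\setminus A^{[1]}\subset C^{[1]}\setminus A^{[1]}$ and, together with $\supp f\subset B^{[1]}$, the pointwise identity
\[
f\cdot \ZI_{B^{[1]}\setminus A^{[1]}}=f\cdot \ZI_{C^{[1]}\setminus A^{[1]}}.
\]
Applying the definition of $\Delta(A,C)$ to this common function then bounds the left-hand side by $\Delta(A,C)\cdot\langle f\rangle_{C^{[1]},r}$, while the support condition and $\mu(B^{[1]})\le \mu(C^{[1]})$ give $\langle f\rangle_{C^{[1]},r}\le \langle f\rangle_{B^{[1]},r}$, completing the estimate.

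The main obstacle is the geometric step $B^{[1]}\subset C^{[1]}$. Since $\mu(B^{[1]})\le \ZK\mu(B)$ may exceed $2\mu(C)$, the two-balls relation does not apply to $B^{[1]}$ directly. Instead, I would argue at the level of the union appearing in \e{h12}: every ball $D\in\ZB$ with $\mu(D)\le 2\mu(B)$ meeting $B$ also satisfies $\mu(D)\le 2\mu(C)$ and meets $C$, and is therefore contained in $C^{[1]}$; with a consistent choice of hull this delivers $B^{[1]}\subset C^{[1]}$, after which the rest of the proof is purely formal.
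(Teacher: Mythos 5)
Your proof is essentially identical to the paper's: both set $g=f\cdot\ZI_{B^{[1]}}$, use the identity $f\cdot\ZI_{B^{[1]}\setminus A^{[1]}}=g\cdot\ZI_{C^{[1]}\setminus A^{[1]}}$, apply the defining supremum of $\Delta(A,C)$ to $g$, and finish with $\langle g\rangle_{C^{[1]},r}\le\langle f\rangle_{B^{[1]},r}$. The only difference is that you explicitly confront the inclusion $B^{[1]}\subset C^{[1]}$, which the paper uses tacitly; note, however, that your argument via \e{h12} only places the \emph{union} of balls of measure at most $2\mu(B)$ meeting $B$ inside $C^{[1]}$, whereas $B^{[1]}$ is merely \emph{some} ball containing that union, possibly of measure up to $\ZK\mu(B)>2\mu(C)$, so the two-balls relation still does not apply and ``a consistent choice of hull'' is an added hypothesis rather than a deduction. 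Since the paper's own proof rests on the same unstated inclusion, this does not put your argument at any disadvantage relative to the paper's.
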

	\begin{proof}
		Given function $f\in L^r(X)$ denote 
		\begin{equation*}
		g(x)=f(x)\cdot \ZI_{B^{[1]}}(x).
		\end{equation*}
		Then we get the estimate
		\begin{align*}
		\sup_{x\in A}|T(f\cdot \ZI_{B^{[1]}\setminus A^{[1]}})(x)|&=\sup_{x\in A}|T(g\cdot\ZI_{C^{[1]}\setminus A^{[1]}})(x)|\\
		&\le \Delta(A,C)\cdot \langle g\rangle_{C^{[1]},r}\\
		&= \Delta(A,C)\cdot \left(\frac{1}{\mu(C^{[1]})}\int_{B^{[1]}}|f|^r\right)^{1/r}\\
		&\le \Delta(A,C)\cdot \langle f\rangle_{B^{[1]},r},
		\end{align*}
		which implies \e {a63}. 
	\end{proof}
	\begin{lemma}\label{L4}
		Let a subadditive operator $T$ satisfy T1)-condition and the weak-$L^r$ bound. Then for any balls $A$, $B$ satisfying $A\subset B$ we have
		\begin{equation}\label{a99}
		\Delta(A,B)\lesssim (\ZL_1+\|T\|_{L^r\to L^{r,\infty}})\left(\frac{\mu(B)}{\mu(A)}\right)^{1/r}.
		\end{equation}	
	\end{lemma}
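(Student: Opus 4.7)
The plan is a standard ``good point'' argument combining localization with the weak-$L^r$ bound. Fix $f \in L^r(X)$ and $x \in A$, and set $g = f \cdot \ZI_{B^{[1]}\setminus A^{[1]}}$. Since $\supp g \subset X \setminus A^{[1]}$, we have $g = g \cdot \ZI_{X\setminus A^{[1]}}$, so applying T1) on the ball $A$ yields
\begin{equation*}
|T(g)(x)| \le |T(g)(x')| + \ZL_1 \langle g\rangle^*_{A,r}
\end{equation*}
for every $x' \in A$. The goal is to produce some $x' \in A$ at which $|T(g)(x')|$ is small, and to bound $\langle g\rangle^*_{A,r}$ by the right-hand side of \e{a99}.

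For the second task, I would observe that $g$ is supported in $B^{[1]}$, so $\|g\|_{L^r}^r \le \mu(B^{[1]})\langle f\rangle_{B^{[1]},r}^r$. For any ball $C \supset A$, $\mu(C) \ge \mu(A)$, whence
\begin{equation*}
\langle g\rangle_{C,r} \le \left(\frac{\mu(B^{[1]})}{\mu(A)}\right)^{1/r}\langle f\rangle_{B^{[1]},r}.
\end{equation*}
Taking the supremum gives $\langle g\rangle^*_{A,r} \lesssim (\mu(B)/\mu(A))^{1/r}\langle f\rangle_{B^{[1]},r}$, where the implicit constant is $\ZK^{1/r}$ coming from \e{h13}.

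For the first task, I would use the weak-$L^r$ bound to estimate, for $\lambda > 0$,
\begin{equation*}
\lambda_{T g}(\lambda) \le \|T\|_{L^r\to L^{r,\infty}}^r \frac{\|g\|_{L^r}^r}{\lambda^r} \le \|T\|_{L^r\to L^{r,\infty}}^r \frac{\mu(B^{[1]})\langle f\rangle_{B^{[1]},r}^r}{\lambda^r},
\end{equation*}
and choose $\lambda$ so that this outer measure is strictly less than $\mu(A)$. Concretely, take
\begin{equation*}
\lambda = 2^{1/r}\|T\|_{L^r\to L^{r,\infty}}\left(\frac{\mu(B^{[1]})}{\mu(A)}\right)^{1/r}\langle f\rangle_{B^{[1]},r};
\end{equation*}
then $\lambda_{Tg}(\lambda) \le \mu(A)/2$, so $A$ cannot be contained in $\{|Tg|>\lambda\}$ (else its outer measure would be at least $\mu(A)$), and there exists $x' \in A$ with $|T(g)(x')| \le \lambda$. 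Inserting this $x'$ and the bound on $\langle g\rangle^*_{A,r}$ into the T1)-inequality yields \e{a99}, after using $\mu(B^{[1]}) \le \ZK\mu(B)$ once more to absorb the hull constant.

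The only subtlety—and the step I would be most careful about—is the non-measurability issue: $\{|Tg|>\lambda\}$ need not be measurable, so the argument must be phrased in terms of outer measure $\mu^*$ as developed in Section 2. The existence of the good point $x'$ then follows from the elementary fact that if $A$ is measurable with $\mu(A)>0$ and $\mu^*(S) < \mu(A)$, then $A \not\subset S$. Everything else is routine manipulation with the quantities $\langle\cdot\rangle_{B,r}$ and $\langle\cdot\rangle^*_{B,r}$.
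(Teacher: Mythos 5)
Your proposal is correct and follows essentially the same route as the paper: locate a good point $x_0\in A$ via the weak-$L^r$ bound and outer measure, transfer to all of $A$ by the T1) oscillation estimate, and control $\langle g\rangle^*_{A,r}$ by $(\mu(B)/\mu(A))^{1/r}\langle f\rangle_{B^{[1]},r}$. Your one-line bound $\langle g\rangle_{C,r}^r\le \mu(A)^{-1}\int_{B^{[1]}}|f|^r$ for every $C\supset A$ is in fact a small simplification of the paper's argument, which splits into the cases $\mu(C)\le\mu(B^{[1]})$ and $\mu(C)>\mu(B^{[1]})$ to reach the same conclusion.
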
 
	\begin{proof}
		Applying the weak-$L^r$ estimate, we get
		\begin{equation*}
		\mu^*\left\{x\in A:\, |T(f\cdot\ZI_{B^{[1]}\setminus A^{[1]}})(x)|>\|T\|_{L^r\to L^{r,\infty}}\left(\frac{2}{\mu(A)}\int_{B^{[1]}}|f|^r\right)^{1/r}\right\}\le \frac{\mu(A)}{2}
		\end{equation*}	
		and so we find a point $x_0\in A$ such that
		\begin{align}
		|T(f\cdot\ZI_{B^{[1]}\setminus A^{[1]}})\}(x_0)|&\le \|T\|_{L^r\to L^{r,\infty}}\left(\frac{2}{\mu(A)}\int_{B^{[1]}}|f|^r\right)^{1/r}\label{f1}\\
		&\lesssim \|T\|_{L^r\to L^{r,\infty}}\cdot\left(\frac{\mu(B)}{\mu(A)}\right)^{1/r}\cdot \langle f\rangle_{B^{[1]},r}.\nonumber
		\end{align}
		According to T1)-condition, for any $x\in A$ we have 
		\begin{equation}\label{f2}
		|T(f\cdot\ZI_{B^{[1]}\setminus A^{[1]}})\}(x)-T(f\cdot\ZI_{B^{[1]}\setminus A^{[1]}})\}(x_0)|\le \ZL_1\cdot \langle f\cdot\ZI_{B^{[1]}\setminus A^{[1]}} \rangle^*_{A,r}.
		\end{equation}
		By the definition of $\langle f \rangle^*_{A,r}$ there is a ball $C\supset A$ such that
		\begin{equation*}
		\langle f\cdot\ZI_{B^{[1]}\setminus A^{[1]}} \rangle^*_{A,r}=\langle f\cdot\ZI_{B^{[1]}\setminus A^{[1]}}\rangle_{C,r}.
		\end{equation*}
		If $\mu(C)\le \mu(B^{[1]})$, then we have $C\subset B^{[2]}$ and therefore 
		\begin{align}
		\langle f\cdot\ZI_{B^{[1]}\setminus A^{[1]}}\rangle_{C,r}&\le \left(\frac{\mu(B^{[2]})}{\mu(C)}\right)^{1/r}\langle f\cdot\ZI_{B^{[1]}\setminus A^{[1]}}\rangle_{B^{[2]},r}\label{h34}\\
		&=\left(\frac{\mu(B^{[2]})}{\mu(C)}\right)^{1/r}\langle f\cdot\ZI_{B^{[1]}\setminus A^{[1]}}\rangle_{B^{[1]},r}\nonumber\\
		&\lesssim \left(\frac{\mu(B)}{\mu(A)}\right)^{1/r}\cdot \langle f\rangle_{B^{[1]},r}.\nonumber
		\end{align}
		In the case of $\mu(C)> \mu(B^{[1]})$, we have $B^{[1]}\subset C^{[1]}$. Hence we get
		\begin{align}
		\langle f\cdot\ZI_{B^{[1]}\setminus A^{[1]}}\rangle_{C,r}&\le \left(\frac{\mu(C^{[1]})}{\mu(C)}\right)^{1/r}\langle f\cdot\ZI_{B^{[1]}\setminus A^{[1]}}\rangle_{C^{[1]},r}\label{h35}\\
		&\lesssim\langle f\cdot\ZI_{B^{[1]}\setminus A^{[1]}}\rangle_{C^{[1]},r}\nonumber\\
		&\le\langle f\cdot\ZI_{B^{[1]}\setminus A^{[1]}}\rangle_{B^{[1]},r}\nonumber\\
		&\le \langle f\rangle_{B^{[1]},r}.\nonumber
		\end{align}
		The estimates \e {h34} and \e {h35} imply the inequality 
		\begin{equation*}
		\langle f\cdot\ZI_{B^{[1]}\setminus A^{[1]}} \rangle^*_{A,r}\lesssim \left(\frac{\mu(B)}{\mu(A)}\right)^{1/r}\cdot \langle f\rangle_{B^{[1]},r},
		\end{equation*}
		which together with \e {f1} and \e {f2} gives
		\begin{equation*}
		|T(f\cdot\ZI_{B^{[1]}\setminus A^{[1]}})\}(x)|\lesssim (\ZL_1+\|T\|_{L^r\to L^{r,\infty}})\left(\frac{\mu(B)}{\mu(A)}\right)^{1/r}\langle f\rangle_{B^{[1]},r},\quad x\in A.
		\end{equation*}
		The last inequality completes the proof of lemma.
	\end{proof} 
	\begin{lemma}\label{L7}
		If a subadditive operator $T$ satisfies T1)-condition and the weak-$L^r$ bound, then for any balls $A,B$ and $C$ satisfying $A\subset B\subset C$ we have
		\begin{equation}\label{h51}
		\Delta(A,C)\lesssim\left(\frac{\mu(C)}{\mu(B)}\right)^{1/r}\cdot (\ZL_1+\|T\|_{L^r\to L^{r,\infty}}+\Delta(A,B)).
		\end{equation}
	\end{lemma}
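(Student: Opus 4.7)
My plan is to split the truncation $f\ZI_{C^{[1]}\setminus A^{[1]}}$ into an ``inner'' piece supported in $B^{[1]}\setminus A^{[1]}$, which is bounded directly by $\Delta(A,B)$, and an ``outer'' piece supported outside $B^{[1]}$, to which T1) and the weak-$L^r$ estimate can be applied as in \lem{L4}. Concretely, I would set
\[ g_1 = f\,\ZI_{C^{[1]}\cap(B^{[1]}\setminus A^{[1]})}, \qquad g_2 = f\,\ZI_{C^{[1]}\setminus(A^{[1]}\cup B^{[1]})},\]
so that $f\,\ZI_{C^{[1]}\setminus A^{[1]}} = g_1+g_2$, and subadditivity reduces \e{h51} to bounding $|Tg_1(x)|$ and $|Tg_2(x)|$ separately for $x\in A$. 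I keep the intersection with $C^{[1]}$ inside both indicators, because $B\subset C$ does \emph{not} guarantee $B^{[1]}\subset C^{[1]}$; this has no effect on the estimates since pointwise $|g_1|,|g_2|\le |f|\ZI_{C^{[1]}}$.

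The bound for $g_1$ is essentially by definition: since $g_1 = (f\ZI_{C^{[1]}})\ZI_{B^{[1]}\setminus A^{[1]}}$, one has
\[ |Tg_1(x)|\le \Delta(A,B)\,\langle f\ZI_{C^{[1]}}\rangle_{B^{[1]},r}, \quad x\in A,\]
and a one-line computation using $\mu(C^{[1]})\le\ZK\mu(C)$ and $\mu(B^{[1]})\ge\mu(B)$ gives $\langle f\ZI_{C^{[1]}}\rangle_{B^{[1]},r}\lesssim (\mu(C)/\mu(B))^{1/r}\langle f\rangle_{C^{[1]},r}$. For $g_2$, which is supported outside $B^{[1]}$, T1) with the ball $B\supset A$ yields
\[ |Tg_2(x)-Tg_2(x')|\le \ZL_1\langle g_2\rangle^*_{B,r}, \qquad x,x'\in B,\]
and, by the same averaging argument as in \e{h34}--\e{h35} of \lem{L4}, the star-average is controlled by $\langle g_2\rangle^*_{B,r}\lesssim (\mu(C)/\mu(B))^{1/r}\langle f\rangle_{C^{[1]},r}$, since any ball $D\supset B$ has $\mu(D)\ge\mu(B)$ and $\int_D|g_2|^r\le\int_{C^{[1]}}|f|^r$.

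The one remaining step is to pin down a reference point $x'\in B$ at which $|Tg_2(x')|$ is small; this is the same pigeonhole that is used in \lem{L4}. The weak-$L^r$ bound gives
\[ \mu^*\{|Tg_2|>\lambda_0\}\le \|T\|_{L^r\to L^{r,\infty}}^r\,\frac{\mu(C^{[1]})\langle f\rangle_{C^{[1]},r}^r}{\lambda_0^r},\]
so choosing $\lambda_0\sim \|T\|_{L^r\to L^{r,\infty}}(\mu(C)/\mu(B))^{1/r}\langle f\rangle_{C^{[1]},r}$ makes this outer measure at most $\mu(B)/2$, forcing the existence of $x'\in B$ with $|Tg_2(x')|\le\lambda_0$. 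Substituting into the oscillation estimate and adding the bound for $Tg_1$ then yields
\[ |T(f\ZI_{C^{[1]}\setminus A^{[1]}})(x)|\lesssim (\ZL_1+\|T\|_{L^r\to L^{r,\infty}}+\Delta(A,B))\left(\frac{\mu(C)}{\mu(B)}\right)^{1/r}\langle f\rangle_{C^{[1]},r}\]
for every $x\in A$, which is exactly \e{h51}. The structure is a straightforward adaptation of \lem{L4} with one extra term $\Delta(A,B)$ arising from the new inner piece; I expect no real obstacle beyond the bookkeeping of indicator functions necessitated by the fact that $B^{[1]}$ need not lie inside $C^{[1]}$.
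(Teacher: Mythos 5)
Your proof is correct and follows essentially the same route as the paper: split $f\,\ZI_{C^{[1]}\setminus A^{[1]}}$ at $B^{[1]}$, bound the inner piece by $\Delta(A,B)$, and control the outer piece via T1) together with the weak-$L^r$ pigeonhole, which is precisely the content of \lem{L4} applied to the pair $B\subset C$ that the paper simply cites in the form $\Delta(B,C)\lesssim(\ZL_1+\|T\|_{L^r\to L^{r,\infty}})\left(\mu(C)/\mu(B)\right)^{1/r}$. Your extra care with the indicators (keeping the intersection with $C^{[1]}$, since $B^{[1]}\subset C^{[1]}$ is not guaranteed) merely tidies a bookkeeping point the paper glosses over.
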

	\begin{proof}
		According to \lem {L4} we have 
		\begin{equation*}
		\Delta(B,C)\lesssim (\ZL_1+\|T\|_{L^r\to L^{r,\infty}})\left(\frac{\mu(C)}{\mu(B)}\right)^{1/r}.
		\end{equation*}
		Thus, taking $f\in L^r(X)$ and $x\in A$, we obtain
		\begin{align*}
		|T(f\cdot &\ZI_{C^{[1]}\setminus A^{[1]}})(x)|\\
		&\le |T(f\cdot \ZI_{C^{[1]}\setminus B^{[1]}})(x)|+|T(f\cdot \ZI_{B^{[1]}\setminus A^{[1]}})(x)|\\
		&\le\Delta(B,C)\langle f\rangle_{C^{[1]},r}+\Delta(A,B) \langle f\rangle_{B^{[1]},r}\\
		&\lesssim (\ZL_1+\|T\|_{L^r\to L^{r,\infty}})\left(\frac{\mu(C)}{\mu(B)}\right)^{1/r}\langle f\rangle_{C^{[1]},r}+\Delta(A,B) \langle f\rangle_{B^{[1]},r}\\
		&\le (\ZL_1+\|T\|_{L^r\to L^{r,\infty}})\left(\frac{\mu(C)}{\mu(B)}\right)^{1/r}\langle f\rangle_{C^{[1]},r}\\
		&\qquad +\Delta(A,B)\left(\frac{\mu(C^{[1]})}{\mu(B^{[1]})}\right)^{1/r} \langle f\rangle_{C^{[1]},r}\\
		&\lesssim \langle f\rangle_{C^{[1]},r}\left(\frac{\mu(C)}{\mu(B)}\right)^{1/r}\cdot(\ZL_1+\|T\|_{L^r\to L^{r,\infty}}+\Delta(A,B))
		\end{align*}
		and so we get \e {h51}.
	\end{proof}
	Inequality \e {a33} and \lem {L7} immediately yield
	\begin{lemma}\label{L-2}
		Let an operator $T\in \BO_\ZB$ and satisfy the weak-$L^r$ bound. Then for any balls $A,B\in\ZB$ with $A\subset B$ we have 
		\begin{equation*}
		\Delta(A,B^{[n]})\lesssim \ZK^{n/r}(\ZL_1+\|T\|_{L^r\to L^{r,\infty}}+\Delta(A,B)).
		\end{equation*}
	\end{lemma}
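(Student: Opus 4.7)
The statement is essentially an immediate corollary, so the plan is a short one-step reduction rather than a substantial argument.

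My plan is to apply Lemma \ref{L7} with the third ball equal to $B^{[n]}$. Concretely, since $A\subset B\subset B^{[n]}$ (the hull operation is inflating, i.e.\ $B\subset B^*\subset B^{[2]}\subset\cdots$ by construction in B4)), Lemma \ref{L7} gives directly
\begin{equation*}
\Delta(A,B^{[n]})\lesssim \left(\frac{\mu(B^{[n]})}{\mu(B)}\right)^{1/r}\cdot(\ZL_1+\|T\|_{L^r\to L^{r,\infty}}+\Delta(A,B)).
\end{equation*}
Then I would invoke inequality \eqref{a33}, namely $\mu(B^{[n]})\le \ZK^n\mu(B)$, to bound the volume ratio by $\ZK^{n/r}$. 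Substituting this bound yields the desired estimate.

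The only thing to verify is that $B\subset B^{[n]}$, which follows by induction from $B\subset B^*$ (a consequence of \eqref{h12} applied with $A=B$, since $B$ trivially satisfies $\mu(B)\le 2\mu(B)$ and $B\cap B\neq\varnothing$). After that, no calculation is required beyond substituting $\ZK^{n/r}$ for the volume ratio. There is no serious obstacle here; the lemma is simply a packaged form of Lemma \ref{L7} iterated through the hull tower via the geometric volume bound \eqref{a33}.
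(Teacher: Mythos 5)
Your proof is correct and matches the paper's own derivation exactly: the paper states that Lemma \ref{L-2} follows immediately from Lemma \ref{L7} (applied with $C=B^{[n]}$) combined with \eqref{a33}. Your added verification that $B\subset B^{[n]}$ (needed for the hypothesis of Lemma \ref{L7}) is a reasonable sanity check, though the paper leaves it implicit.
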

	
	\begin{theorem}\label{T6}
		If an operator $T\in \BO_\ZB$ satisfies weak-$L^r$ estimate, then the operator $T^*\in \BO_\ZB$ and satisfies weak-$L^r$ inequality. Moreover, we have
		\begin{align*}
		&\ZL_1(T^*)\lesssim \ZL_1(T)+\|T\|_{L^r\to L^{r,\infty}},\\
		&\ZL_2(T^*)=\ZL_2(T). 
		\end{align*}
	\end{theorem}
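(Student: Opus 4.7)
The weak-$L^r$ inequality for $T^*$ was proved in \trm{T5-1} with $\|T^*\|_{L^r\to L^{r,\infty}}\lesssim\ZL_1+\|T\|_{L^r\to L^{r,\infty}}$, so the plan reduces to verifying T1) and T2) for $T^*$ with the announced constants.

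For T1), I would fix $B\in\ZB$ and $x,x'\in B$, and set $h=f\cdot\ZI_{X\setminus B^*}$. For each ball $C\ni x$ entering the supremum defining $T^*h(x)$, let $m\ge 0$ be the smallest integer with $B\subset C^{[m]}$ (which exists by \lem{L1}); then $x'\in C^{[m]}$ too. Applying T1) of $T$ on $C^{[m]}$ yields $|T(h\ZI_{X\setminus C^{[m+1]}})(x)|\le T^*h(x')+\ZL_1\langle h\rangle^*_{C^{[m]},r}$, where the bound $|T(h\ZI_{X\setminus C^{[m+1]}})(x')|\le T^*h(x')$ uses the ball $C^{[m]}\ni x'$ in the supremum for $T^*h(x')$. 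Next I would decompose $h\ZI_{X\setminus C^*}=h\ZI_{X\setminus C^{[m+1]}}+\sum_{i=1}^{m}h\ZI_{C^{[i+1]}\setminus C^{[i]}}$ and estimate each annular piece by $\Delta_T(C^{[i-1]},C^{[i]})\langle h\rangle_{C^{[i+1]},r}$, where \lem{L4} gives $\Delta_T(C^{[i-1]},C^{[i]})\lesssim(\ZL_1+\|T\|_{L^r\to L^{r,\infty}})\ZK^{1/r}$. Two observations keep the total admissible. First, the two-ball relation combined with the vanishing of $h$ on $B^*$ yields $\langle h\rangle_{D,r}\le\ZK^{1/r}\langle f\rangle^*_{B,r}$ whenever $D\cap B\neq\varnothing$: either $\mu(D)<\mu(B)$, forcing $D\subset B^*$ and $\langle h\rangle_{D,r}=0$, or $\mu(D)\ge\mu(B)$, forcing $B\subset D^*$ and the inclusion then gives the bound. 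Second, an annular piece $h\ZI_{C^{[i+1]}\setminus C^{[i]}}$ vanishes whenever $C^{[i+1]}\subset B^*$, and the two-ball contrapositive shows $m\le i^*+2$, where $i^*$ is the smallest index with $\mu(C^{[i^*+1]})>2\mu(B)$, so only an admissibly bounded number of indices genuinely contribute. Summing, taking the supremum over $C\ni x$, and symmetrizing in $x,x'$ yields $|T^*h(x)-T^*h(x')|\lesssim(\ZL_1+\|T\|_{L^r\to L^{r,\infty}})\langle f\rangle^*_{B,r}$, which is T1) for $T^*$ with the claimed constant.

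For T2), given $A$ with $A^*\ne X$, I would take the ball $B\supsetneq A$ supplied by T2) of $T$ (so $\Delta_T(A,B)\le\ZL_2(T)$) and set $g=f\cdot\ZI_{B^*\setminus A^*}$. The lower bound $\ZL_2(T^*)\ge\ZL_2(T)$ is immediate: since $g$ already vanishes on $A^*$, taking $C=A$ in the supremum for $T^*g(x)$ gives $g\ZI_{X\setminus A^*}=g$, hence $T^*g(x)\ge|Tg(x)|$. For the matching upper bound I would analyze $|T(g\ZI_{X\setminus C^*})(x)|=|T(f\ZI_{B^*\setminus(A^*\cup C^*)})(x)|$ for each $C\ni x$ in three regimes by the size of $C$: when $\mu(C)\le 2\mu(A)/\ZK$ the two-ball relation forces $C^*\subset A^*$, the indicator collapses to $\ZI_{B^*\setminus A^*}$, and the bound $\le\ZL_2(T)\langle f\rangle_{B^*,r}$ from T2) of $T$ applies directly; when $B^*\subset C^*$ the truncation vanishes; and the intermediate range of scales is admissibly bounded and handled by the subadditivity split $|T(g\ZI_{X\setminus C^*})(x)|\le|Tg(x)|+|T(g\ZI_{C^*})(x)|$ together with the $\Delta$-estimates of \lem{L4} and \lem{L-2} applied along a chain of hulls connecting $A$ to $C^*$. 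Taking the supremum over $C$ produces $\Delta_{T^*}(A,B)\le\ZL_2(T)$, completing the equality.

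The hard part will be the bookkeeping around the chain of hulls $C^{[0]}\subset C^{[1]}\subset\dots$: one must exploit the vanishing of the truncated function on $B^*$ (respectively $A^*$) to restrict the number of genuinely contributing annular pieces to an admissible count depending only on $\ZK$ via the growth $\mu(C^{[i+1]})\le\ZK\mu(C^{[i]})$ from B4). This counting mechanism, rather than the individual $\Delta$-bounds from \lem{L4}, is what ultimately forces the T1) constant to be linear in $\ZL_1+\|T\|_{L^r\to L^{r,\infty}}$ and the T2) constant to match $\ZL_2(T)$ exactly.
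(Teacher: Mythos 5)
Your reduction to \trm{T5-1} for the weak-$L^r$ bound is exactly what the paper does, and your T1) strategy (compare $T^*h(x)$ with $T^*h(x')$ through a common large ball and control the annular corrections by $\Delta$-estimates from \lem{L4}) is in the right spirit. But there are two genuine gaps. First, in T1) you anchor everything on "the smallest $m$ with $B\subset C^{[m]}$, which exists by \lem{L1}". \lem{L1} says nothing of the sort: it concerns sequences of balls whose measures tend to $\sup_{A\in\ZB}\mu(A)$, whereas the hull chain $C^{[0]}\subset C^{[1]}\subset\cdots$ can stabilize, $C^{[n]}=C^{[n+1]}$ for all large $n$, without ever containing $B$. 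This actually happens in the martingale basis of Section 8 (take $C$ an atom with $\mu(\pr(C))>2\mu(C)$, so $C^*=C$, and $B=\pr(C)$). In that case no $m$ exists and your decomposition never starts. The escape is easy but must be said: when the chain stabilizes below measure $2\mu(B)$ one has $C^{[1]}\subset B^*$, hence $h\ZI_{X\setminus C^{[1]}}=h$ and the term is just $|Th(x)|$, handled by T1) of $T$ on $B$ itself. The paper sidesteps the whole issue by splitting only on whether the near-optimal ball $A$ for $T^*f(x)$ has $\mu(A)<\mu(B)$ or $\mu(A)\ge\mu(B)$, so that at most two fixed hull levels ($B^{[2]}$ or $A^{[2]}$) ever appear and no counting of annuli is needed.

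Second, your T2) argument cannot give the stated equality $\ZL_2(T^*)=\ZL_2(T)$. Your "intermediate range of scales" is handled by \lem{L4} and \lem{L-2}, whose bounds are in terms of $\ZL_1+\|T\|_{L^r\to L^{r,\infty}}$; these terms do not disappear, so your route yields at best $\ZL_2(T^*)\lesssim \ZL_1+\ZL_2+\|T\|_{L^r\to L^{r,\infty}}$, weaker than the theorem. The point you are missing is that no regime analysis is needed at all: for $g=f\cdot\ZI_{B^{[1]}\setminus A^{[1]}}$ and any $C\ni x$ one has $g\cdot\ZI_{X\setminus C^{[1]}}=\bigl(f\cdot\ZI_{X\setminus C^{[1]}}\bigr)\cdot\ZI_{B^{[1]}\setminus A^{[1]}}$, and $\langle f\cdot\ZI_{X\setminus C^{[1]}}\rangle_{B^{[1]},r}\le\langle f\rangle_{B^{[1]},r}$, so every term in the supremum defining $T^*g(x)$ is directly bounded by $\Delta_T(A,B)\langle f\rangle_{B^{[1]},r}$. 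Combined with your (correct) lower bound via $C=A$, this gives the identity $\Delta_{T^*}(A,B)=\Delta_T(A,B)$ for all pairs, which is the paper's one-line proof of $\ZL_2(T^*)=\ZL_2(T)$. Your closing heuristic that "the counting mechanism forces the T2) constant to match $\ZL_2(T)$ exactly" is therefore not just unproven but structurally wrong: exactness comes from the absence of any chain, not from controlling its length.
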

	\begin{proof}
		One can easily check that for any balls $A,B$ satisfying $A\subset B$ we have the equality
		\begin{equation*}
		\Delta_{T^*}(A,B)=\Delta_{T}(A,B).
		\end{equation*}
		If balls $A$ and $B$ satisfy T2)-condition for the operator $T$, then the same conditions will hold also for $T^*$ with $\ZL_2(T^*)=\ZL_2(T)$. 
		To prove T1)-condition, let $B\in \ZB$ and $f\in L^r(X)$ satisfy 
		\begin{equation}\label{a67}
		\supp f\in X\setminus B^{[1]}.
		\end{equation}
		Take arbitrary points $x,x'\in B$ and estimate $|T^*f(x)- T^*f(x')|$. If $T^*f(x)= T^*f(x')$, then the estimation is trivial. So we can suppose that $T^*f(x)> T^*f(x')$. Using the definition of $T^*f(x)$, we find a ball $A\in\ZB$ with $x\in A$ such that
		\begin{equation}\label{h37}
		\frac{T^*f(x)+T^*f(x')}{2}< |T(f\cdot \ZI_{X\setminus A^{[1]}})(x)|.
		\end{equation} 
		Denote 
		\begin{align}\label {a60}
		A'=\left\{
		\begin{array}{lc}
		B&\hbox{ if } \mu(A)<\mu(B),\\
		A^{[1]}&\hbox { if } \mu(A)\ge \mu(B).\\
		\end{array}
		\right.
		\end{align}
		Since $x,x'\in B$, from \lem {L4} and \e {a67} it follows that
		\begin{align}
		|T(f\cdot \ZI_{B^{[2]}\setminus A^{[1]}})(x)|&=|T(f\cdot \ZI_{B^{[2]}\setminus A^{[1]}}\cdot \ZI_{B^{[2]}\setminus B^{[1]}}))(x)|\label{a81}\\
		&\le \Delta(B,B^{[1]}) \langle f\cdot \ZI_{B^{[2]}\setminus A^{[1]}}\rangle_{B^{[2]},r}\nonumber\\
		&\le \Delta(B,B^{[1]}) \langle f\rangle_{B^{[2]},r}\nonumber\\
		&\lesssim \left(\frac{\mu(B^{[1]})}{\mu(B)}\right)^{1/r}(\ZL_1(T)+\|T\|_{L^r\to L^{r,\infty}}) \langle f \rangle^*_{B,r},\nonumber\\
		&\lesssim (\ZL_1(T)+\|T\|_{L^r\to L^{r,\infty}}) \langle f \rangle^*_{B,r},\nonumber
		\end{align}
		and similarly
		\begin{equation}\label{a82}
		|T(f\cdot \ZI_{B^{[2]}\setminus A'^{[1]}}(x')|\lesssim (\ZL_1(T)+\|T\|_{L^r\to L^{r,\infty}}) \langle f \rangle^*_{B,r}.
		\end{equation}
		One can easily check that from \e {a60} it follows that $B\subset A'$ and so $x'\in A'$. This implies 
		\begin{equation*}
		T^*f(x')\ge |T(f\cdot \ZI_{X\setminus A'^{[1]}})(x')|,
		\end{equation*}
		which together with \e {h37} yields 
		\begin{align}
		|T^*f(x)-T^*f(x')|&=T^*f(x)-T^*f(x')\label{a6}\\
		&\le 2|T(f\cdot \ZI_{X\setminus A^{[1]}})(x)|-2T^*f(x')\nonumber\\
		&\le 2\bigg(|T(f\cdot \ZI_{X\setminus A^{[1]}})(x)|-|T(f\cdot \ZI_{X\setminus A'^{[1]}})(x')|\bigg).\nonumber
		\end{align}
		In the case $\mu(A)<\mu(B)$ we get $A\subset B^{[1]}$ and therefore 
		\begin{equation*}
		A'^{[1]}=B^{[1]}\subset B^{[2]},\quad A^{[1]}\subset B^{[2]}.
		\end{equation*}
		Thus, applying T1)-condition for $T$, from \e {a81}, \e {a82}  and \e {a6} we conclude
		\begin{align*}
		|T^*f(x)-T^*f(x')|&\le 2\bigg(|T(f\cdot \ZI_{X\setminus B^{[2]}})(x)|+|T(f\cdot \ZI_{B^{[2]}\setminus A^{[1]}})(x)|\\
		&\qquad -|T(f\cdot \ZI_{X\setminus B^{[2]}})(x')|+|T(f\cdot \ZI_{B^{[2]}\setminus 	A'^{[1]}})(x')|\bigg)\\
		&\lesssim |T(f\cdot \ZI_{X\setminus B^{[2]}})(x)-T(f\cdot \ZI_{X\setminus B^{[2]}})(x')|\\
		&\qquad +(\ZL_1(T)+\|T\|_{L^r\to L^{r,\infty}})\langle f \rangle^*_{B,r} \\
		&\lesssim (\ZL_1(T)+\|T\|_{L^r\to L^{r,\infty}}) \langle f \rangle^*_{B,r}.
		\end{align*}
		If $\mu(A)\ge \mu(B)$, then by \e {a60} we have $A'=A^{[1]}$ and so $x,x'\in B\subset A^{[1]}$, $A'^{[1]}=A^{[2]}$. Thus, applying \lem {L4} and \e {a6}, we get
		\begin{align*}
		|T^*f(x)-T^*f(x')|&\le 2\bigg(|T(f\cdot \ZI_{X\setminus A^{[1]}})(x)|-|T(f\cdot \ZI_{X\setminus A^{[2]}})(x')|\bigg)\\
		&\le 2\bigg(|T(f\cdot \ZI_{X\setminus A^{[2]}})(x)|-|T(f\cdot \ZI_{X\setminus A^{[2]}})(x')|\bigg)\\
		&\qquad +2|T(f\cdot \ZI_{A^{[2]}\setminus A^{[1]}})(x)|\\
		&\lesssim \ZL_1(T)\langle f \rangle^*_{A^{[1]},r}+(\ZL_1(T)+\|T\|_{L^r\to L^{r,\infty}})\langle f\rangle_{A^{[2]},r}\\
		&\le(\ZL_1(T)+\|T\|_{L^r\to L^{r,\infty}})\langle f \rangle^*_{B,r},
		\end{align*}
		and finally, T1)-condition. Weak-$L^r$ bound of $T^*$ follows from \trm {T5-1}.
	\end{proof}
	We say that a ball-basis $\ZB$ in a measure space satisfies the doubling condition if there is a constant $\eta>1$ such that for any ball $A\in \ZB$, $A^{[1]}\neq X$, one can find a ball $B$ satisfying
	\begin{equation}\label{h73}
	A\subsetneq B,\quad \mu( B)\le\eta  \cdot \mu(A).
	\end{equation}
	\begin{theorem}\label{T4}
		Let the ball-basis $\ZB$ satisfy the doubling condition. If a subadditive operator $T$ satisfies T1)-condition and the weak-$L^r$ bound, then $T\in\BO_\ZB$. Moreover, we have 
		\begin{equation*}
		\ZL_2(T)\lesssim \eta^{1/r}(\ZL_1+\|T\|_{L^r\to L^{r,\infty}}), 
		\end{equation*}
		where $\eta$ is the doubling constant from \e {h73}.
	\end{theorem}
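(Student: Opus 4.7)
The statement to prove is essentially a direct consequence of \lem{L4} once the doubling condition is invoked, so the plan is short.

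First I would fix an arbitrary $A\in\ZB$ with $A^{[1]}=A^*\neq X$. The hypothesis on $\ZB$ (the doubling condition \e{h73}) then produces a ball $B\in\ZB$ with $A\subsetneq B$ and $\mu(B)\le\eta\cdot\mu(A)$. This is precisely the candidate ball I will use to verify T2) for $A$.

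Next I would observe that the quantity appearing in T2), namely
\begin{equation*}
\sup_{x\in A,\,f\in L^r(X)}\frac{|T(f\cdot\ZI_{B^{[1]}\setminus A^{[1]}})(x)|}{\langle f\rangle_{B^{[1]},r}},
\end{equation*}
is by definition exactly $\Delta(A,B)$. Since $T$ satisfies T1) together with the weak-$L^r$ bound, \lem{L4} applies and gives
\begin{equation*}
\Delta(A,B)\lesssim(\ZL_1+\|T\|_{L^r\to L^{r,\infty}})\left(\frac{\mu(B)}{\mu(A)}\right)^{1/r}\le\eta^{1/r}(\ZL_1+\|T\|_{L^r\to L^{r,\infty}}).
\end{equation*}
Since $A$ was arbitrary, this shows that $T$ satisfies T2) with a constant $\ZL_2(T)\lesssim\eta^{1/r}(\ZL_1+\|T\|_{L^r\to L^{r,\infty}})$, hence $T\in\BO_\ZB$.

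There is essentially no obstacle here; the only mild subtlety is making sure the ball $B$ supplied by doubling lies strictly above $A$ in the sense required by T2) (i.e.\ $B\supsetneq A$), which is exactly the first part of \e{h73}. The quantitative dependence on $\eta$ comes out for free from the ratio $\mu(B)/\mu(A)\le\eta$ substituted into the conclusion of \lem{L4}.
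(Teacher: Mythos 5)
Your proof is correct and follows essentially the same route as the paper: take the ball $B\supsetneq A$ furnished by the doubling condition \e{h73}, note that the T2) quantity is $\Delta(A,B)$, and apply \lem{L4} with $\mu(B)/\mu(A)\le\eta$ to get $\ZL_2(T)\lesssim\eta^{1/r}(\ZL_1+\|T\|_{L^r\to L^{r,\infty}})$. No gaps.
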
 
	\begin{proof}
		We need to check T2)-condition. If balls $A$ and $B$ satisfy conditions \e {h73}, then applying \lem {L4}, we get
		\begin{equation*}
		\Delta(A,B)\lesssim (\ZL_1+\|T\|_{L^r\to L^{r,\infty}})\left(\frac{\mu( B)}{\mu(A)}\right)^{1/r}\le \eta^{1/r} (\ZL_1+\|T\|_{L^r\to L^{r,\infty}}).
		\end{equation*}
		Thus we get $\ZL_2(T)\lesssim \eta^{1/r}(\ZL_1+\|T\|_{L^r\to L^{r,\infty}})<\infty$.
	\end{proof}
	\section{Proof of main theorems}
	\begin{lemma}\label{L21}
		If a subadditive operator $T$ satisfies T1)-condition and the weak-$L^r$ bound, then for any $B\in \ZB$ there exists a ball $B'$ such that
		\begin{align}
		&B^{[2]}\subset B',\label{z21}\\
		&\Delta(B^{[2]},B')\lesssim   \ZL_1+\|T\|_{L^r\to L^{r,\infty}},\label{z22}
		\end{align}
		and we either have 
		\begin{equation}\label{z23}
		B'^{[1]}=B'\text { or }\mu(B')\ge 2\mu(B).
		\end{equation}
	\end{lemma}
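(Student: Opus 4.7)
I would construct $B'$ by iterating the hull operation starting from $B^{[2]}$ and terminating at the first step where one of the two alternatives in \e{z23} is realized.

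If $\mu(B^{[2]})\ge 2\mu(B)$, I take $B'=B^{[2]}$: conditions \e{z21} and \e{z23} are immediate, and $\Delta(B^{[2]},B^{[2]})=0$ since the set $(B^{[2]})^{[1]}\setminus(B^{[2]})^{[1]}$ appearing in the definition of $\Delta$ is empty. Otherwise I consider the chain of iterated hulls $B^{[2]}\subset B^{[3]}\subset\ldots$, which by B4) satisfies $\mu(B^{[n+1]})\le \ZK\mu(B^{[n]})$. If there exists some $n\ge 3$ with $\mu(B^{[n]})\ge 2\mu(B)$, let $n_0$ be the smallest such index; then $\mu(B^{[n_0-1]})<2\mu(B)$ gives $\mu(B^{[n_0]})<2\ZK\mu(B)$, so with $B'=B^{[n_0]}$ one has $\mu(B')\ge 2\mu(B)$ and $\mu(B')/\mu(B^{[2]})\le 2\ZK$ (using $\mu(B^{[2]})\ge \mu(B)$). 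Lemma \ref{L4} then yields
\begin{equation*}
\Delta(B^{[2]},B')\lesssim (\ZL_1+\|T\|_{L^r\to L^{r,\infty}})(2\ZK)^{1/r}\lesssim \ZL_1+\|T\|_{L^r\to L^{r,\infty}},
\end{equation*}
verifying \e{z22}.

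The remaining, and hardest, situation is $\mu(B^{[n]})<2\mu(B)$ for every $n\ge 2$. Here the non-decreasing bounded sequence $\{\mu(B^{[n]})\}$ converges, and to produce a $B'$ with $B'^{[1]}=B'$ I would either exploit the freedom in the choice of hull granted by B4) (hoping to arrange $B^{[n_0+1]}=B^{[n_0]}$ at some finite $n_0$), or, alternatively, pick $B'$ among the balls $A\supset B^{[2]}$ of measure at most $2\mu(B)$ with $\mu(A)$ essentially maximal and argue via the two balls relation that the hull of such $B'$ cannot enlarge it. In either approach the bound \e{z22} again follows from Lemma \ref{L4} together with $\mu(B')/\mu(B^{[2]})\le 2$. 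The principal obstacle is precisely this last case: translating the convergence of hull measures into the set-theoretic identity $B'^{[1]}=B'$, which appears to require a careful maximality or limiting argument compatible with the non-uniqueness of the hull operation in B4).
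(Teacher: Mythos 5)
Your first two cases are handled correctly, but the third case --- $\mu(B^{[n]})<2\mu(B)$ for every $n$ --- is exactly where the lemma's content lies, and neither of the two routes you sketch for it closes the gap. Iterating the hull cannot be made to stabilize: whether a ball $C$ satisfies $C^{[1]}=C$ is the strong structural property that \emph{every} ball of measure at most $2\mu(C)$ meeting $C$ is already contained in $C$, and this cannot be ``arranged'' by choosing the hull cleverly, nor does convergence of the measures $\mu(B^{[n]})$ imply it. Your alternative (take $B'$ of essentially maximal measure among balls $A\supset B^{[2]}$ with $\mu(A)\le 2\mu(B)$) also fails as stated: $B'^{[1]}$ may have measure up to $\ZK\mu(B')$, hence possibly exceeding $2\mu(B)$, so $B'^{[1]}$ need not belong to the class over which you maximized, and maximality gives no control forcing $B'^{[1]}\subset B'$.

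The missing idea in the paper is a dichotomy based on a \emph{gap in the measure spectrum} of the balls meeting $B$. Set $\zA=\{A\in\ZB:\ A\cap B\neq\varnothing\}$, let $a$ be the supremum of $\mu(A)$ over $A\in\zA$ with $\mu(B)\le\mu(A)\le 2\mu(B)$, and $b$ the infimum of $\mu(A)$ over $A\in\zA$ with $\mu(A)>2\mu(B)$; no ball of $\zA$ has measure in $(a,b)$. If $b\le\ZK^2a$, pick $G_2\in\zA$ with $b\le\mu(G_2)<2b$ and take $B'=G_2^{[3]}$: then $\mu(B')\ge b\ge 2\mu(B)$ and $\mu(B')\le 2\ZK^5 a\le 4\ZK^5\mu(B^{[2]})$, so \lem{L4} gives \e{z22}. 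If $b>\ZK^2a$, pick $G_1\in\zA$ with $a/2<\mu(G_1)\le a$ and take $B'=G_1^{[1]}$: then $\mu(B'^{[1]})\le\ZK^2\mu(G_1)\le\ZK^2a<b$, and since $B'^{[1]}\in\zA$ and its measure avoids the gap $(a,b)$, necessarily $\mu(B'^{[1]})\le a\le 2\mu(G_1)$; the two balls relation then forces $B'^{[1]}\subset G_1^{[1]}=B'$, i.e.\ $B'^{[1]}=B'$. It is this gap argument --- not a limiting or maximality argument on iterated hulls --- that produces the fixed-point alternative in \e{z23}, and it is the step your proposal is missing.
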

	\begin{proof}
		Letting  
		\begin{equation*}
		\zA=\{A\in\ZB:\,A\cap B\neq\varnothing\},
		\end{equation*}
		we denote
		\begin{align}
		&a=\sup_{A\in\zA:\, \mu(B)\le \mu(A)\le 2\mu(B)}\mu(A)\le 2\mu(B),\label{z44}\\
		&b=\inf_{A\in\zA:\, \mu(A)> 2\mu(B)}\mu(A)\ge 2\mu(B).\label{z45}
		\end{align}
		Observe that there is no ball $A\in\zA$ with $a<\mu(A)<b$ and there exist balls $G_1,G_2\in\zA$ such that
		\begin{align*}
		&\frac{a}{2}<\mu(G_1)\le a,\\
		&b\le \mu(G_2)< 2b.
		\end{align*}
		If $b\le \ZK^2a$, then we define $B'=G_2^{[3]}$. 	Since $B\cap G_2\neq\varnothing $ and $\mu(B)\le a\le b\le \mu(G_2)$, we get $B\subset G_2^{[1]}$ and therefore $B^{[2]}\subset G_2^{[3]}=B'$. Thus we get \e {z21}. Taking into account the inequalities
		\begin{align*}
		&\mu(B')=\mu(G_2^{[3]})\le \ZK^3\mu(G_2)\le \ZK^3\cdot 2b\le 2a\ZK^5,\\
		&\mu(B^{[2]})\ge \mu(B)\ge \frac{a}{2},
		\end{align*}
		from \lem{L4} it follows that
		\begin{equation*}
		\Delta(B^{[2]},B')\lesssim \left(\frac{\mu(B')}{\mu(B^{[2]})}\right)^{1/r}(\ZL_1+\|T\|_{L^r\to L^{r,\infty}})\lesssim  \ZL_1+\|T\|_{L^r\to L^{r,\infty}}.
		\end{equation*} 
		Hence we obtain \e {z22}. Then by \e {z45} we have
		\begin{equation}\label{z33}
		\mu(B')\ge \mu(G_2)\ge b\ge 2\mu(B)
		\end{equation}
		and so we get the second relation in \e {z23}. Now suppose we have $b> \ZK^2a$. 
		Define $B'=G_1^{[1]}\in \zA$. We have
		\begin{equation*}
		\mu(B'^{[1]})\le \ZK^2\mu(G_1)\le a\ZK^2<b.
		\end{equation*}
		Since there is no ball from $\zA$ with a measure in the interval $(a,b)$, we get $\mu(B'^{[1]})\le a$. Thus we get
		\begin{equation*}
		B'^{[1]}\cap G_1\neq\varnothing,\quad \mu(B'^{[1]})\le 2\mu(G_1).
		\end{equation*}
		These relations imply $B'^{[1]}\subset G_1^{[1]}=B'$, that means $B'^{[1]}=B'$ and we get the first relation in \e {z23}. 
		On the other hand by \e {z44} we have
		\begin{equation*}
		\mu(B)\le a\le 2\mu(G_1)\le 2\mu(B')\le 2a\le 4\mu(B). 
		\end{equation*}
		That means $B\subset B'^{[1]}=B'$ and therefore $B^{[2]}\subset B'^{[2]}=B'$. Hence we obtain \e {z21}. Using  \lem {L4} and \e {z33}, we get
		\begin{equation*}
		\Delta(B^{[2]}, B')\lesssim \left(\frac{\mu(B')}{\mu(B^{[2]})}\right)^{1/r}(\ZL_1+\|T\|_{L^r\to L^{r,\infty}})\lesssim \ZL_1+\|T\|_{L^r\to L^{r,\infty}},
		\end{equation*}
		that gives \e {z22}. Lemma is proved.
	\end{proof}
	\begin{lemma}\label{L22}
		Let $T\in\BO_\ZB$ satisfy weak-$L^r$ estimate and for a ball $B\in\ZB$ we have $B^{[1]}=B$. Then there exists a ball $B'$ satisfying \e {z21} and 
		\begin{align}
		&\Delta(B^{[2]},B')\le   \ZL_2,\label{z27}\\
		&\mu(B')>2\mu(B).\label{z46}
		\end{align}
	\end{lemma}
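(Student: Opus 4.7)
The plan is to reduce the lemma to a direct application of the $\T2$)-condition. The key observation is that the hypothesis $B^{[1]}=B$ implies $B^{[2]}=(B^{[1]})^{[1]}=B^{[1]}=B$, so the triple of relations \e{z21}, \e{z27}, \e{z46} we must verify simplifies to finding a ball $B'\supset B$ with $\Delta(B,B')\le \ZL_2$ and $\mu(B')>2\mu(B)$.

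Since $T\in\BO_\ZB$, we may apply T2) to $A:=B$ (the degenerate case $B=X$ is excluded, as otherwise no $B'$ with $\mu(B')>2\mu(B)$ can exist; here $A^*=B^{[1]}=B\neq X$). This produces a ball $B'\supsetneq B$ such that
\[
\Delta(B,B')=\sup_{x\in B,\,f\in L^r(X)}\frac{|T(f\cdot\ZI_{B'^{[1]}\setminus B^{[1]}})(x)|}{\langle f\rangle_{B'^{[1]},r}}\le \ZL_2,
\]
and since $B^{[2]}=B$ this is exactly \e{z27}. The inclusion $B^{[2]}=B\subsetneq B'$ gives \e{z21}.

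It remains to check the strict size bound \e{z46}. Suppose for contradiction that $\mu(B')\le 2\mu(B)$. Since $B'\cap B\supseteq B\neq\varnothing$, the two-balls relation (applied with the roles of the two balls chosen so that the larger-measure one dominates) yields $B'\subset B^{[1]}=B$. This contradicts $B'\supsetneq B$, so $\mu(B')>2\mu(B)$ and \e{z46} holds.

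The proof presents essentially no obstacle: the content of the lemma is that the idempotence $B^{[1]}=B$ collapses the hull iterates, so connectivity alone (no appeal to \lem{L4} or to the weak-$L^r$ bound) is enough to produce a ball strictly larger than $B$ in measure, where the relative size is forced by B4) via the two-balls relation. The weak-$L^r$ assumption in the hypothesis is used only implicitly through the fact that $T\in\BO_\ZB$ guarantees T2).
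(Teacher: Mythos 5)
Your proposal is correct and takes essentially the same route as the paper: apply the T2)-condition to $B$ itself (using $B^{[2]}=B^{[1]}=B$) to obtain $B'\supsetneq B$ with $\Delta(B,B')\le\ZL_2$, which gives \e{z21} and \e{z27}, and deduce \e{z46} by the two-balls relation, since $\mu(B')\le 2\mu(B)$ would force $B'\subset B^{[1]}=B$, contradicting $B'\supsetneq B$. Your side remarks (that the weak-$L^r$ hypothesis and \lem{L4} are not actually used, and that the case $B=X$ is implicitly excluded) are consistent with the paper's own proof, which likewise uses only connectivity.
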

	\begin{proof}
		Applying T2)-condition, we find a ball $B'\supsetneq B$ such that
		\begin{equation}\label{z26}
		\Delta(B,B')\le \ZL_2.
		\end{equation}
		From $B^{[1]}=B$ we get $B^{[2]}=B\subset B'$ that is \e {z21}. Then, applying  \e {z26}, we get \e {z27}. 
		Observe that \e {z46} holds. Indeed, otherwise by two balls relation we will have $B'\subset  B^{[1]}=B$, which contradicts the condition $B'\supsetneq B$.
	\end{proof}
	\begin{lemma}\label{L8}
		If $T\in\BO_\ZB$ satisfies weak-$L^r$ estimate, then for any $B\in \ZB$ there exists a sequence of balls $B=B_0, B_1, B_2,\ldots $ such that  
		\begin{align}
		&\bigcup_kB_k=X,\label{h9} \\
		&B_{k-1}^{[2]}\subset B_{k},\quad k\ge 1,\label{h46}\\
		&\sup_{k\ge 1}\Delta(B_{k-1}^{[2]},B_{k})\lesssim   \ZL_1+\ZL_2+\|T\|_{L^r\to L^{r,\infty}},\quad k\ge 1.\label{h41}
		\end{align}
	\end{lemma}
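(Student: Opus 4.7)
The plan is to construct the sequence $\{B_k\}$ inductively, starting with $B_0=B$. Given $B_k$, I would distinguish two cases: if $B_k^{[1]}=B_k$, apply \lem{L22} to produce $B_{k+1}$ with $B_k^{[2]}\subset B_{k+1}$, $\Delta(B_k^{[2]},B_{k+1})\le \ZL_2$, and $\mu(B_{k+1})>2\mu(B_k)$; otherwise apply \lem{L21} to produce $B_{k+1}$ with $B_k^{[2]}\subset B_{k+1}$, $\Delta(B_k^{[2]},B_{k+1})\lesssim \ZL_1+\|T\|_{L^r\to L^{r,\infty}}$, and the dichotomy that either $B_{k+1}^{[1]}=B_{k+1}$ or $\mu(B_{k+1})\ge 2\mu(B_k)$. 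In either case \e{h46} holds and $\Delta(B_k^{[2]},B_{k+1})\lesssim \ZL_1+\ZL_2+\|T\|_{L^r\to L^{r,\infty}}$, so \e{h41} is immediate.

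The main obstacle is verifying the covering property \e{h9}, for which I would argue that $\mu(B_{k+2})\ge 2\mu(B_k)$ for every $k$. If at step $k$ we had $B_k^{[1]}=B_k$, then \lem{L22} already gives $\mu(B_{k+1})>2\mu(B_k)$ and a fortiori $\mu(B_{k+2})\ge \mu(B_{k+1})>2\mu(B_k)$. Otherwise \lem{L21} produces $B_{k+1}$ satisfying either $\mu(B_{k+1})\ge 2\mu(B_k)$ (done) or $B_{k+1}^{[1]}=B_{k+1}$; in the latter case the very next step triggers \lem{L22}, yielding $\mu(B_{k+2})>2\mu(B_{k+1})\ge 2\mu(B_k)$ since $B_{k+1}\supset B_k^{[2]}\supset B_k$. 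Consequently $\mu(B_{2j})\ge 2^{j}\mu(B_0)$, forcing $\mu(B_k)\to r:=\sup_{A\in\ZB}\mu(A)$.

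Since every $B_k$ contains $B_0=B$, the sequence $\{B_k\}$ satisfies the hypotheses of \lem{L1}, whence any ball $A\in\ZB$ lies in $B_k^{[1]}$ for all sufficiently large $k$. Because $B_k^{[1]}\subset B_k^{[2]}\subset B_{k+1}$, we get $A\subset B_{k+1}$, and B2) ensures that every point of $X$ belongs to some ball. This gives $X=\bigcup_k B_k$, completing the proof of \e{h9}. The only delicate step is the doubling dichotomy above, since \lem{L21} alone can stall on fixed points of the hull operation; the role of \lem{L22} is precisely to escape such fixed points, and pairing the two lemmas guarantees geometric growth of the measures $\mu(B_k)$.
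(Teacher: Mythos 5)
Your proposal is correct and follows essentially the same route as the paper: the same inductive dichotomy (apply Lemma \ref{L22} when $B_k^{[1]}=B_k$ and Lemma \ref{L21} otherwise), and the same observation that at least one of any two consecutive steps doubles the measure, giving $\mu(B_{k+2})\ge 2\mu(B_k)$. You additionally spell out the deduction of \e{h9} from Lemma \ref{L1}, a step the paper leaves implicit, which is a welcome clarification rather than a deviation.
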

	\begin{proof}
		We construct a sequence of balls $\{B_k\}$ satisfying \e {h46}, \e {h41} and the relation
		\begin{equation}\label{z32}
		\mu(B_{k})>2\mu(B_{k-2}),\quad k\ge 2.
		\end{equation}
		We do it by induction. Take $B_0=B$ and suppose we have already defined $B_k$ for $k=0,1,\ldots l$ satisfying the conditions \e {h46}, \e {h41} and \e {z32} for $k\le l$. If $B_l^{[1]}=B_l$, then applying \lem {L22}, we get a ball $B_{l+1}=B'$ such that
		\begin{align}
		&B_l^{[2]}\subset B_{l+1},\label{z28}\\
		&\Delta(B_l^{[2]},B_{l+1})\le \ZL_2,\nonumber\\
		&\mu(B_{l+1})\ge 2\mu(B_l).\nonumber
		\end{align}
		In the case of $B_l^{[1]}\neq B_l$ we use \lem {L21}. At this time the ball $B_{l+1}=B'$ will satisfy \e {z28} and the conditions
		\begin{align}
		&\Delta(B_l^{[2]},B_{l+1})\lesssim \ZL_1+\|T\|_{L^r\to L^{r,\infty}},\nonumber\\
		&\mu(B_{l+1})\ge 2\mu(B_l).\label{z30}
		\end{align}
		Applying this process, we get a sequence satisfying the conditions \e {h46} and \e {h41}. Besides, one can easily observe that at least for one of any two consecutive integers $k$ we will have $\mu(B_{k+1})\ge 2\mu(B_k)$. So the condition \e {z32} will also be satisfied.  
	\end{proof}
	
	\begin{lemma}\label{L5-1}
		Let $T$ be a $\BO_\ZB$ operator satisfying weak-$L^r$ inequality. If 
		\begin{equation}\label{a80}
		\lambda\ge 3\ZK^4,
		\end{equation}
		then for every measurable set $F\subset X$ and a ball $A\in \ZB$ with 
		\begin{equation}\label{a83}
		F\cap A\neq\varnothing,\quad \mu(F)\le\mu(A)/\lambda,
		\end{equation}
		there exists a family of balls $\ZG\subset \ZB$ satisfying the conditions
		\begin{align}
		&F\cap A^{[1]}\cap G \neq\varnothing\text { if } G\in\ZG,\label{a18}\\
		&F\cap A^{[1]}\subset \bigcup_{G\in\ZG}  G\text { a.s. }\label{a19}\\
		&\mu\left(\bigcup_{G\in\ZG} G^{[1]}\right)\lesssim \frac{\mu(A)}{\lambda}.\label{a20}
		\end{align}
		Besides, for each  $G\in\ZG$ there is a ball $\tilde G$ (not necessarily in $\ZG$) such that 
		\begin{align}
		&\tilde G\not\subset F.\label{a22}\\
		&G^{[2]}\subset \tilde G\subset A^{[1]},\label{a21}\\
		&\Delta(G^{[2]},\tilde G)\lesssim  \ZL_1+\ZL_2+\|T\|_{L^r\to L^{r,\infty}}.\label{a61}
		\end{align}
	\end{lemma}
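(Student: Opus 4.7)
The plan is a stopping-time construction fed into a Vitali-type extraction. For each density point $x \in F \cap A^{[1]}$ (density holds via \lem{L12}), use the density property to pick a ball $B_0(x)\ni x$ with $\mu(B_0(x)\cap F) > (1-\delta)\mu(B_0(x))$ for a small fixed $\delta$. Apply \lem{L8} to $B_0(x)$ to obtain a chain $B_0(x) = B_0 \subsetneq B_1 \subsetneq \cdots$ with $B_{j-1}^{[2]} \subset B_j$, $\Delta(B_{j-1}^{[2]}, B_j) \lesssim \ZL_1 + \ZL_2 + \|T\|_{L^r\to L^{r,\infty}}$, and $\bigcup_j B_j = X$. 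Let $k^*(x)$ be the least $j\ge 1$ with $\mu(B_j\cap F) \le \tfrac{1}{2}\mu(B_j)$; this is finite since $\mu(B_j)\to\infty$ (or at least eventually exceeds $2\mu(F)$). Set $G'(x) = B_{k^*(x)-1}$, $\tilde G(x) = B_{k^*(x)}$. Then by construction $\tilde G(x)\not\subset F$ (giving \e{a22}), $G'(x)^{[2]} \subset \tilde G(x)$, and the $\Delta$-bound \e{a61} follows from \lem{L8}.

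The main technical step is the containment \e{a21}: $\tilde G(x) \subset A^{[1]}$. Since $\mu(G'(x)\cap F) > \tfrac{1}{2}\mu(G'(x))$, we have $\mu(G'(x)) < 2\mu(F) \le 2\mu(A)/\lambda$. The proof of \lem{L21} (or the alternative \lem{L22}) yields $\mu(\tilde G(x)) \le C_0\mu(G'(x))$ for an admissible constant $C_0$. The hypothesis $\lambda \ge 3\ZK^4$ is calibrated so that $\mu(\tilde G(x)) \le 2\mu(A)$; combined with $x\in \tilde G(x) \cap A^{[1]}$, the two balls relation gives $\tilde G(x) \subset A^{[1]}$.

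It remains to extract a subfamily $\ZG$. Apply \lem{L1-1} to $\{G'(x)\}_{x\in F\cap A^{[1]}}$ to obtain pairwise disjoint $\{G'(x_j)\}$ with $F\cap A^{[1]} \subset \bigcup_j G'(x_j)^{[1]}$ almost surely. Define $\ZG = \{G(x_j) := G'(x_j)^{[1]}\}$ while keeping the same $\tilde G(x_j)$. Then \e{a18} and \e{a19} are immediate. The inclusion $G(x_j)^{[2]} = G'(x_j)^{[3]} \subset \tilde G(x_j)$ persists, because $G'(x_j)^{[2]}\subset B_{k^*-1}$ implies $G'(x_j)^{[3]}\subset B_{k^*-1}^{[1]}\subset B_{k^*-1}^{[2]}\subset B_{k^*} = \tilde G(x_j)$; the $\Delta$-bound \e{a61} for the new family is preserved by splitting subadditively and applying \lem{L-2} to absorb the extra hull operation. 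Finally, \e{a20} follows from
$$\sum_j \mu\bigl(G(x_j)^{[1]}\bigr) = \sum_j \mu\bigl(G'(x_j)^{[2]}\bigr) \le \ZK^2 \sum_j \mu(G'(x_j)) \lesssim \mu(F) \lesssim \mu(A)/\lambda,$$
by disjointness of the $G'(x_j)$ combined with their $F$-density.

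The main obstacle is the containment $\tilde G(x)\subset A^{[1]}$ in the second paragraph, which requires careful measure bookkeeping and is exactly where the threshold $\lambda \ge 3\ZK^4$ enters, via balancing the Lemma \ref{L21}-type growth $\mu(\tilde G)\lesssim\mu(G')$ against the two balls relation. A secondary technicality is preserving the $\Delta$-bound after replacing $G'$ by its hull in the extraction step, which is absorbed via \lem{L-2}.
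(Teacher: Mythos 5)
Your overall architecture (density/Vitali covering of $F\cap A^{[1]}$ plus a stopping time along the chains of \lem{L8}) is close in spirit to the paper's, but there is a genuine gap at the step you yourself flag as the main one. You claim that ``the proof of \lem{L21} (or the alternative \lem{L22}) yields $\mu(\tilde G(x))\le C_0\mu(G'(x))$ for an admissible constant $C_0$.'' This is false for the chains produced by \lem{L8}. The chain alternates between two constructions: when $B_l^{[1]}\neq B_l$, \lem{L21} is used and does give $\mu(B_{l+1})\lesssim\mu(B_l)$; but when $B_l^{[1]}=B_l$, \lem{L22} is used, and there the next ball comes from the connectivity condition T2), which supplies \emph{no upper bound whatsoever} on $\mu(B_{l+1})/\mu(B_l)$. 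This is exactly the non-doubling phenomenon the paper is built to handle (e.g.\ for the martingale basis a parent can be arbitrarily larger than its child). Consequently the single step $G'(x)=B_{k^*-1}\to \tilde G(x)=B_{k^*}$ can blow up the measure past $\mu(A)$, and your derivation of $\tilde G(x)\subset A^{[1]}$ collapses. The paper avoids this by \emph{capping} $\tilde G$: it sets $\tilde G=B_{m+1}^{[1]}$ only when $\mu(B_{m+1}^{[1]})\le\mu(A)$ and otherwise sets $\tilde G=A^{[1]}$, checking $A^{[1]}\not\subset F$ directly from $\mu(A^{[1]})>\mu(F)$, and recovering the bound \e{a61} in the capped case via $A^{[1]}\subset B_{m+1}^{[3]}$ and \lem{L-2}. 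Your proof needs this (or an equivalent) truncation; the threshold $\lambda\ge 3\ZK^4$ in the paper calibrates $\mu(G^{[2]})\le\mu(A)$ against the two balls relation, not a bound on $\mu(\tilde G)/\mu(G')$.

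A second, smaller defect: after replacing $G'$ by $G=G'^{[1]}$ in the extraction step, you assert $G^{[2]}=G'^{[3]}\subset\tilde G$ from the chain property. But \lem{L8} gives only $B_{k^*-1}^{[2]}\subset B_{k^*}$, hence $G^{[2]}=B_{k^*-1}^{[3]}\subset B_{k^*}^{[1]}$, which need not lie inside $\tilde G=B_{k^*}$ (your intermediate claim ``$G'(x_j)^{[2]}\subset B_{k^*-1}$'' cannot hold, since $G'(x_j)=B_{k^*-1}$ and hulls contain the ball). The paper sidesteps this by keeping $G=B_m$ itself as the member of $\ZG$ (not its hull) and obtaining the covering \e{a19} instead from a preliminary application of \lem{L3} to $F\cap A^{[1]}$, which also yields $\sum\mu(B)\lesssim\mu(F\cap A^{[1]})$ and hence \e{a20} without any disjointification of the $G$'s. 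Your stopping rule based on the density $\mu(B_j\cap F)\le\frac12\mu(B_j)$ (versus the paper's containment rule $B_{m+1}^{[1]}\not\subset F$) is a legitimate variant and does give $\mu(G')\le 2\mu(F)$, but both of the above issues must be repaired before the argument closes.
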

	\begin{proof}
		Applying \lem {L3} for the set $E=F\cap A^{[1]}$ we find a family of balls $\ZP\subset \ZB$ such that
		\begin{align}
		&F\cap A^{[1]}\cap B\neq \varnothing, \quad B\in \ZP,\label{a8}\\
		&F\cap A^{[1]}\subset\bigcup_{B\in\ZP} B\text { a.s. }\label{a9}\\
		&\sum_{B\in\ZP} \mu(B)<2\ZK\mu(F\cap A^{[1]}). \label{a10}
		\end{align}
		Take an arbitrary element $B\in\ZP$. Applying \lem {L8}, we find a sequence of balls $B_k\in \ZB$, $k=0,1,2,\ldots$, $B=B_0$, with conditions \e {h9}-\e {h41}. To $B\in\ZP$ we can attach a ball $G=B_{m}$, where $m\ge 0$ is the least index satisfying the relation
		\begin{equation}\label{a16}
		B_{m+1}^{[1]}\not\subset F.
		\end{equation}
		The collection of all such $G$ defines the family $\ZG$. 
		If $G\in \ZG$ is generated by $B\in\ZP$, then combining the relation 
		\begin{equation}\label{a25}
		B\subset B_0^{[2]}\subset B_m=G
		\end{equation}
		with \e {a8} we obtain \e {a18}. From \e {a25} we also get 
		\begin{equation*}
		\bigcup_{G\in\ZG} G\supset \bigcup_{B\in\ZP} B,
		\end{equation*}
		which together with \e {a9} implies \e {a19}. Then, according to the definition of the integer $m$ (see \e {a16}) we have either $G^{[1]}\subset F$ or $G\in \ZP$ (the second relation holds only if $m=0$). This remark together with \e {a10} implies 
		\begin{align}
		\mu\left(\bigcup_{G\in\ZG}G^{[1]}\right)&\le \mu(F)+\mu\left(\bigcup_{B\in\ZP} B^{[1]}\right)\label{a23}\\
		&\le \mu(F)+\sum_{B\in\ZP} \mu\left(B^{[1]}\right)\nonumber\\
		&\le \mu(F)+\ZK\sum_{B\in\ZP} \mu(B),\nonumber\\
		&\le \frac{(2\ZK^2+1)\mu(A^{[1]})}{\lambda},\nonumber\\
		&\le \frac{3\ZK^2\mu(A)}{\lambda},\nonumber
		\end{align}
		and we get \e {a20}. Now define 
		\begin{equation}\label{a68}
		\tilde G=\left\{
		\begin{array}{lcl}
		A^{[1]}&\hbox{ if }& \mu(B_{m+1}^{[1]})>\mu(A),\\
		B_{m+1}^{[1]}&\hbox{ if }&\mu(B_{m+1}^{[1]})\le \mu(A).
		\end{array}
		\right.
		\end{equation}
		According to \e {a83}, we have $\mu(A^{[1]})>\mu(F)$ and so $A^{[1]}\not\subset F$. This together with \e {a16} and \e {a68} implies \e {a22}. To check condition \e {a21} notice that form \e {a80} and \e {a23} it follows that
		\begin{equation*}
		\mu(G^{[2]})\le \ZK^2\cdot \mu(G)\le  \ZK^2 \cdot \frac{3\ZK^2\mu(A)}{\lambda}\le \mu(A).
		\end{equation*}
		Thus, since $G\cap A\neq \varnothing$, we conclude
		\begin{equation}\label{a24}
		G^{[2]}\subset A^{[1]}.
		\end{equation}
		If $\mu(B_{m+1}^{[1]})>\mu(A)$, then by \e {a68} we have $\tilde G=A^{[1]}$ and using \e {a24} we get \e {a21}. If $\mu(B_{m+1}^{[1]})\le \mu(A)$, then since $B_{m+1}^{[1]}\cap A\neq\varnothing$, we have $B_{m+1}^{[1]}\subset A^{[1]}$. Hence from \e {h46} and \e {a68} we obtain 
		\begin{equation*}
		G^{[2]}=B_{m}^{[2]}\subset  B_{m+1}^{[1]}=\tilde G\subset A^{[1]}
		\end{equation*}
		and so \e {a21}. To prove \e {a61} first we suppose that $\mu(B_{m+1}^{[1]})\le \mu(A)$ and so $\tilde G=B_{m+1}^{[1]}$. Applying \lem {L-2} and \e {h41}, we get
		\begin{align*}
		\Delta(G^{[2]},\tilde G)&=\Delta(B_m^{[2]},B_{m+1}^{[1]})\\
		&\lesssim \left(\frac{\mu(B_{m+1}^{[1]})}{\mu(B_{m+1})}\right)^{1/r}(\ZL_1+\|T\|_{L^r\to L^{r,\infty}}+\Delta(B_m^{[2]},B_{m+1}))\\
		&\lesssim  \ZL_1+\ZL_2+\|T\|_{L^r\to L^{r,\infty}}.
		\end{align*}
		In the case $\mu(B_{m+1}^{[2]})> \mu(A^{[1]})$ we have $\tilde G=A^{[1]}\subset B_{m+1}^{[3]}$ and therefore $\tilde G^{[1]}\subset B_{m+1}^{[4]}$. Once again applying \lem {L-2}, we obtain
		\begin{align*}
		\Delta(G^{[2]},\tilde G)&\le \Delta(B_m^{[2]},B_{m+1}^{[3]})\\
		&\lesssim \left(\frac{\mu(B_{m+1}^{[2]})}{\mu(B_{m+1})}\right)^{1/r}( \ZL_1+\ZL_2+\|T\|_{L^r\to L^{r,\infty}}+\Delta(B_m^{[2]},B_{m+1}))\\
		&\lesssim  \ZL_1+\ZL_2+\|T\|_{L^r\to L^{r,\infty}},
		\end{align*}
		which completes the proof of lemma.
	\end{proof}
	\begin{definition}
		We say a set of balls $\zA$ is a family-tree if 
		\begin{enumerate}
			\item [F1)]there is an element $A_0\in \zA$ called grandparent of $\zA$, 
			\item [F2)]to each $A\in \zA$ except the grandparent $A_0$ a unique parent $\pr(A)\in \zA$ is attached,
			\item [F3)]for each $A\in \zA$, $A\neq A_0$ we have $A_0=\pr^n(A)=\pr(\pr(\ldots \pr(A)\ldots))$ for some $n\in \ZN$.
		\end{enumerate}
	\end{definition}
	Given ball $A\in \zA$ we denote
	\begin{align*}
	&\Ch_n(A)=\{B\in\zA:\, \pr^n(B)=A\},\quad n=1,2,\ldots \\
	&\Gen(A)=\bigcup_{n=1}^\infty \Ch_n(A),
	\end{align*}
	where $M$ is the maximal operator \e {1-1}. The family $\Ch(A)=\Ch_{1}(A)$ is said to be the children of $A$ and $\Gen(A)$ is the generation of $A$. 
	
	The notation $n\ll m$ ($n\gg m$) for two integers $n,m$ denotes $n<m-1$ ($n> m+1$) and $n\asymp m$ stands for the condition $|m-n|\le 1$.
	
	\begin{proof}[Proof of \trm {T1}]
		Let an operator $T\in \BO_\ZB$ satisfy weak-$L^r$ inequality. Define 
		\begin{align*}
		\Gamma f(x)=\max\left\{|Tf(x)|,T^{*}f(x),\ZL\cdot M_rf(x)\right\},
		\end{align*}
		where
		\begin{equation*}
		\ZL=\ZL_1+\ZL_2+\|T\|_{L^r\to L^{r,\infty}}.
		\end{equation*}
		Applying \trm {T1-1} and \trm {T5-1}, we conclude that the operator $\Gamma$ satisfy weak-$L^r$ estimate and besides
		\begin{equation}\label{h62}
		\|\Gamma\|_{L^r\to L^{r,\infty}}\lesssim \ZL_1+\ZL_2+\|T\|_{L^r\to L^{r,\infty}}.
		\end{equation}
		Denote 
		\begin{equation}\label{h57}
		T^{**}f(x)=\sup_{B\in \ZB,\, x\in B}|T(f\cdot \ZI_{B^{[1]}})(x)|.
		\end{equation}
		Subadditivity of $T$ implies 
		\begin{equation}\label{a5}
		T^{**}f(x)\le T^{*}f(x)+|Tf(x)|\le 2\Gamma f(x),\quad x\in X.
		\end{equation} 
		
		Let $f\in L^r(X)$ and $B$ be the ball from the statement of theorem. Clearly we can chose a ball $A_0$ such that 
		\begin{equation}\label{h104}
		B^{[1]}\subset A_0,\quad \int_{A_0}|f|>\frac{\|f\|_1}{2}.
		\end{equation}
		Let $\lambda>0$ satisfy \e {a80}.	We shall construct a family-tree $\zA\subset \ZB$ with the grandparent $A_0$ such that 
		\begin{enumerate}
			\item [1)] If $G\in \Ch(A)$, then $A^{[1]}\cap G\neq \varnothing$.
			\item [2)] We have
			\begin{equation}\label{a72}
			\mu\left(\bigcup_{G\in \Ch(A)}G^{[1]}\right)\lesssim \frac{\mu(A)}{\lambda}.
			\end{equation}
			\item [3)] If $G\in \Ch(A)$, then there exist a ball $\tilde G$ and a point $\xi\in \tilde G$ such that
			\begin{align}
			&G^{[2]}\subset\tilde G \subset A^{[1]},\label{a12}\\
			&\Gamma f_A(\xi)\lesssim  \ZL\lambda \cdot \langle f\rangle_{A^{[3]},r},\label{a13}\\
			&|T(f\cdot \ZI_{\tilde G^{[1]}\setminus  G^{[3]}})(x)|\lesssim \ZL\lambda\cdot \langle f\rangle_{A^{[3]},r},\, x\in G^{[2]},\label{a14}	
			\end{align}
			where
			\begin{equation}\label{h106}		
			f_A=\left\{\begin{array}{lcl}
			&f\cdot \ZI_{A^{[3]}}\hbox{ if }& A\neq A_0,\\
			&f\hbox{ if }&A=A_0 .
			\end{array}
			\right.
			\end{equation}
			\item [4)] We have
			\begin{align}
			&|\Gamma f_A(x)|\lesssim  \ZL\lambda\cdot \langle f\rangle_{A^{[3]},r},\label{d23}\\
			&\qquad\qquad \text {for almost all }  x\in A^{[1]}\setminus \bigcup_{G\in \Ch(A)} G,\text { if }A\neq A_0.\nonumber
			\end{align}
		\end{enumerate}
		The elements of $\zA$ will be determined inductively by an increasing order of generations levels. The first element of $\zA$ is $A_0$. Then we suppose inductively that we have already defined all the members of 
		\begin{equation*}
		\bigcup_{1\le n\le l} \Ch_n(A)
		\end{equation*}
		such that any member 
		\begin{equation*}
		A\in \bigcup_{1\le n\le l-1} \Ch_n(A)
		\end{equation*}
		satisfies the conditions 1)-4). To define the members of $\Ch_{l+1}(A_0)$ we take an arbitrary $A\in \Ch_{l}(A_0)$ and define the children of $A$ as follows. Take a measurable set $F=F_A$ such that
		\begin{align}
		&F\supset \{x\in X:\, \Gamma f_A(x)>\beta\cdot  \langle f\rangle_{A^{[3]},r}\},\label{a26}\\
		&\mu(F)=\mu^*\{x\in X:\, \Gamma f_A(x)>\beta\cdot  \langle f\rangle_{A^{[3]},r}\}.
		\end{align}
		Suppose that $F\neq\varnothing$. Using \e {h62}, for any $A$ (include the case $A=A_0$) we will have
		\begin{align*}
		\mu(F)&\le \frac{2\mu(A^{[3]})}{\beta^r}\|\Gamma\|^r_{L^r\to L^{r,\infty}}\\
		&\le \frac{2\ZK^3\cdot \mu(A)}{\beta^r}\|\Gamma\|^r_{L^r\to L^{r,\infty}}\\
		&\le \frac{\mu(A)}{\lambda},
		\end{align*}
		for a suitable constant 
		\begin{equation*}
		\beta\sim \ZL\lambda
		\end{equation*}
		since $r\ge 1$ and we have \e {h62}. Not that  in the case $A=A_0$ one needs additionally use the inequality \e {h104}.
		Thus, applying \lem {L5-1} for $A$ and $F=F_A$, we get a family $\ZG$ satisfying the conditions of the lemma. The family $\ZG$ will form the children collection of $A$, that is
		\begin{equation*}
		\Ch(A)=\ZG.
		\end{equation*}
		The relations 1) and 2) are immediate consequences of \e {a18} and \e {a20} respectively, while \e {a12} follows from condition \e {a21} of \lem {L5-1}. From \e {a22} it follows the existence of $\xi\in \tilde G\setminus F$ and by the definition \e {a26} we get \e {a13} (\e {h105} if $A=A_0$). Since $\xi\in \tilde G$, using \e {a21},\e {a61} and \e {a13}, for $f\in L^r(X)$ we get the inequality
		\begin{align*}
		\sup_{x\in G^{[2]}}|T(f\cdot \ZI_{\tilde G^{[1]}\setminus  G^{[3]}})(x)|&=\sup_{x\in G^{[2]}}|T(f\cdot\ZI_{A^{[3]}}\cdot\ZI_{\tilde G^{[1]}\setminus  G^{[3]}})(x)|\\
		&\le\Delta(G^{[2]},\tilde G) \cdot \langle f\cdot \ZI_{A^{[3]}}\rangle_{\tilde G^{[1]},r}\\
		&\lesssim \ZL \langle f\cdot \ZI_{A^{[3]}}\rangle^*_{\tilde G,r}\\
		&\le \ZL M_r(f\cdot \ZI_{A^{[3]}})(\xi)\\
		&\le \Gamma(f\cdot \ZI_{A^{[3]}})(\xi)=\Gamma f_A(\xi)\\
		&\le  \beta\cdot \langle f\rangle_{A^{[3]},r}\\
		&\lesssim  \ZL\lambda\cdot \langle f\rangle_{A^{[3]},r},
		\end{align*}
		which implies \e {a14}. From \e {a19} we get
		\begin{equation*}
		\mu\left(F\bigcap\left(A^{[1]}\setminus \bigcup_{G\in \Ch(A)} G\right)\right)=0,
		\end{equation*}
		therefore by the definition of $F$ we have \e {d23}. Hence, the properties of the family $\zA$ are satisfied. 
		
		Now we construct a sparse subfamily $\ZS\subset \zA$ consisting of countable collection of balls, which will satisfy the conditions of the theorem. We will do that by removing some elements of $\zA$. As we will see below, 
		removing an element $A\in \zA$, we also remove all the elements of its generation $\Gen (A)$. Thus one can easily check that the properties 1)-3) will hold during the whole process. 
		
		To start the description of the process we let $R=\ZK^2$, where $\ZK>1$ is the constant from \e {h13}. For $B\in \ZB$ denote
		\begin{equation*}
		\r(B)=[\log_R \mu(B)]
		\end{equation*}
		Observe that the collections of balls
		\begin{align*}
		\zA_k&=\{B\in \zA:\, \r(B)=k\}\\
		&=\left\{B\in \zA:\, R^{k}\le \mu(B)<R^{k+1}\right\},\quad k\le k_0,
		\end{align*}
		gives a partition of $\zA$, i.e. we have $\zA=\cup_{k\le k_0} \zA_k$, where $k_0=\r(A_0)$ and $A_{k_0}=\{A_0\}$.
		The reduction of the elements of $\zA$ will be realized in different stages. The content of $\zA_{k_0}$ will not be changed. In the $n$-th stage only the contents of the families $\zA_k$ with $k\le k_0-n$ can be changed. Besides, at the end of the $n$-th stage $\zA_{k_0-n}$ will be fixed and remain the same till the end of the process. Suppose by induction the $l$-th stage of reduction has been already finished and so the families $\zA_k$, $k=k_0,k_0-1,k_0-2,\ldots, k_0-l$ have already fixed. In the next $(l+1)$-th stage we will apply the following two procedures consecutively:
		\begin{procedure} Remove any element $G\in \zA_{k_0-l-1}$ together with all the elements of his generation $\Gen(G)$ if there exists a $B\in \zA$ satisfying the conditions  
			\begin{align}	
			&G^{[2]}\cap B\neq \varnothing ,\label{d29}\\
			&\r(\pr^k(G))\ll\r(B)\ll \r(\pr^{k+1}(G)),\label{d32}
			\end{align}
			for some integer $k\ge 0$.
		\end{procedure}
		\begin{remark}
			Observe that if an element $G$ is removed because of a ball $B$ satisfying the conditions \e {d29} and \e {d32} of Procedure 1, then we should have 
			\begin{equation*}
			\r(G)\ll\r(B)\ll\r(\pr(G))
			\end{equation*}
			that means \e {d32} can hold only with $k=0$. Indeed, the left inequality immediately follows from \e {d32}. To prove the right one, suppose to the contrary in \e {d32} we have $k\ge 1$. Denote 
			\begin{equation*}
			G'=\pr^k(G)\in \bigcup_{j=0}^{l}\zA_{k_0-j}.
			\end{equation*}
			Since $G'^{[2]}\supset G^{[2]}$ (see \e {a12}), we have $G'^{[2]}\cap B\neq \varnothing $. On the other hand  \e {d32} can be written by $\r(G')\ll\r(B)\ll\r(\pr(G'))$. We thus conclude that $G'$ satisfies the conditions of the Procedure 1, so $G'$  together with his generation $\Gen(G')$ (include $G$) had to be removed in one of the previous stages of the process, when $B$ was already fixed. This is a contradiction and so $k=0$. 
		\end{remark}
		\begin{procedure}
			Apply \lem {L1-1} to the rest of the elements $\zA_{k_0-l-1}$ having after Procedure 1. The application of lemma removes some elements of $\zA_{k_0-l-1}$. If an element $A$ is removed, then the generation $\Gen(A)$ will also be removed. 
		\end{procedure}
		\begin{remark}
			After the Procedure 2 the elements of $\zA_{k_0-l-1}$ become pairwise disjoint. Besides, we will have   
			\begin{equation}\label{a70}
			\bigcup_{G\in \zA_{k_0-l-1}(\text{\rm before Procedure 2})} G\subset \bigcup_{G\in \zA_{k_0-l-1}(\text{\rm after Procedure  2})}G^{[1]}.
			\end{equation}
		\end{remark}
		After these two procedures the family $\zA_{k_0-l-1}$ will be fixed. Hence, finishing the induction process, we get the final state of $\zA$ which will be denoted by $\ZD$. Since after Procedure 2 in $n$-th stage $\zA_{k_0-n}$ gets countable number of balls so the family $\ZD$ will also be countable at the end of whole process. 
		
		Now we shall prove that for an admissible constant $\lambda>0$ the family $\ZD$ is a union of two $1/2$-sparse collections of balls. For $A\in \ZD$ define
		\begin{equation}\label{d25}
		E(A)=A\setminus \bigcup_{G\in \ZD:\,\r(G)\ll\r(A)} G
		=A\setminus \bigcup_{G\in \ZD:\,G\cap A\neq\varnothing,\,\r(G)\ll\r(A)}G.
		\end{equation}
		Observe that 
		\begin{equation}\label{a27}
		E(A)\cap E(B)=\varnothing,\text { if } \r(A)\not \asymp \r(B) \text { or } \r(A)= \r(B).
		\end{equation}
		Indeed, take arbitrary $A,B\in \ZD$. If $\r(A)=\r(B)$, then the balls $A,B$ are result of the application of Procedure 2 (\lem {L1-1}). That means we  have $A\cap B=\varnothing$ and therefore according to \e {d25} it follows that $E(A)\cap E(B)=\varnothing$. If $\r(A)\gg\r(B)$, then $E(A)\cap B=\varnothing$
		immediately follows from the definition \e {d25} and so we will get again $E(A)\cap E(B)=\varnothing$. 
		To prove 
		\begin{equation}\label{a28}
		\mu(E(A))\ge \mu(A)/2
		\end{equation}
		take an arbitrary $A\in \ZD$ and denote
		\begin{equation*}
		\ZP=\{P\in \ZD:\, \r(P)\asymp\r(A),\,P^{[2]}\cap A\neq\varnothing\}.
		\end{equation*} 
		We have 
		\begin{equation}\label{h55}
		R^{-3}\cdot \mu(A)\le \mu(P)\le R^3\cdot \mu(A),\quad P\in \ZP.
		\end{equation}
		On the other hand
		\begin{equation*}
		\ZP\subset \zA_{l-1}\cup\zA_l\cup\zA_{l+1},
		\end{equation*}
		where $l=\r(A)$. Hence $\ZP$ consists of three families of pairwise disjoint balls. Thus, applying the remark after the \lem {L1-2}, we get 
		\begin{equation}\label{a84}
		\#\ZP\lesssim 1.
		\end{equation}
		Suppose that $G\in \ZD$ satisfies
		\begin{equation*}
		\r(G)\ll\r(A),\quad G\cap A\neq\varnothing,  
		\end{equation*}
		and so $G^{[2]}\cap A\neq\varnothing$. Since $G$ was not removed by Procedure 1, we have $\r(\pr^k(G))\asymp\r(A)$ for some integer $k\ge 1$. Denote $P=\pr^k(G)$. We have $\r(P)\asymp\r(A)$ and $P^{[2]}\supset G^{[2]}$ by \e {a12} and so $P^{[2]}\cap A\neq\varnothing$. This implies that $P\in\ZP$ and $G\in \Gen(P)$.  Hence, from \e {d25}, \e {a72}, \e {a84} and \e {h55} it follows that 
		\begin{align*}
		\mu(A\setminus E_A)&\le \mu\left(\bigcup_{G\in \ZD:\,G\cap A\neq\varnothing,\,\r(G)\ll\r(A)}G\right)\\
		&\le \mu\left(\bigcup_{P\in \ZP}\bigcup_{G\in \Gen(P)}G\right)\le \sum_{P\in \ZP}\sum_{k=1}^\infty \mu\left(\bigcup_{G:\,\pr^k(G)=P}G\right)\\
		&\lesssim \sum_{P\in \ZP}\sum_{k=1}^\infty \frac{\mu(P)}{\lambda^k}\lesssim \frac{\mu(A)}{\lambda}.  
		\end{align*}
		Thus for an admissible constant $\lambda$ we get 
		\e {a28}. From \e {a27} and \e {a28} we conclude that two families  
		\begin{equation*}
		\ZD_1=\{A\in\ZD:\, \r(A)\text { is odd }\},\quad \ZD_2=\ZD\setminus \ZD_1,
		\end{equation*}
		are sparse. Further for an $A\in \ZD$ we will need the bound
		\begin{align}
		&|\Gamma f_A(x)|\lesssim  \ZL\lambda\langle f\rangle_{A^{[3]},r},\label{d31}\\
		&\qquad\qquad \text{for a.a. } x\in A^{[1]}\setminus \bigcup_{G\in \ZD:\, \r(G)<\r(A)}G^{[1]},\nonumber
		\end{align}
		which is based on the inequality \e {d23}. It is enough to prove that 
		\begin{equation*}
		A^{[1]}\setminus  \bigcup_{B\in \ZD:\, \r(B)<\r(A)}B^{[1]}
		\subset A^{[1]}\setminus \bigcup_{G\in\zA:\, G\in\Ch(A)}G
		\end{equation*}
		or equivalently
		\begin{equation}\label{d28}
		D=\bigcup_{B\in \ZD:\, \r(B)<\r(A)}B^{[1]}
		\supset \bigcup_{G\in\zA:\, G\in\Ch(A)}G.
		\end{equation}
		Take $A\in\zA$ and arbitrary $G\in\Ch(A)$. We have $\r(G)<\r(A)$. In the case $G\in \ZD$, that is $G$ has not been removed during the Procedures 1 and 2, $G$ is an element of the left union of \e {d28} and so $G\subset D$. If $G\not\in \ZD$, then $G$ has been removed during the removal process. If $G$ was removed by an application of Procedure 1, then there exists a ball $B\in \ZD$ such that $G^{[2]}\cap B\neq \varnothing$ and $\r(G)\ll\r(B)\ll\r(\pr(G))=\r(A)$ (see remark after the Procedure 1). From the inequality $R\ge \ZK^3$ it follows that
		\begin{equation*}
		\mu(G^{[2]})\le \ZK^2\mu(G)\le \ZK^2\cdot \frac{\mu(B)}{\ZK^2}= \mu(B).
		\end{equation*}
		Thus we get $G\subset G^{[2]}\subset B^{[1]}$, which means $G\subset D$. If $G$ was removed by an application of Procedure 2, then according to \e {a70} we have $G\subset \cup_kB_k^{[1]}$ for a family of balls $B_k$ satisfying $\r(B_k)=\r(G)<\r(A)$ and so $B_k^{[1]}\subset D$. This again implies $G\subset D$ and so we get \e {d31}. To prove the theorem we need to prove
		\begin{align}
		|Tf(x)|&\lesssim  \ZL\lambda\cdot \ZA_{\ZS,r} f(x)\label{a73}\\
		&=\ZL\lambda \cdot \sum_{S\in\ZS}\left\langle f\right\rangle_{S,r}\cdot\ZI_{S}(x)\text { a.e. } x\in X,\nonumber
		\end{align}
		where $\ZS=\{S^{[3]}:\, S\in \ZD\}$ clearly is a union of two sparse collections. Observe that the set 
		\begin{equation*}
		E=\bigcap_{k\le k_0} \bigcup_{G\in\ZD:\, \r(G)\le k}A^{[1]}
		\end{equation*}
		has zero measure, since $\ZD$ consists of countable number of balls with bounded sum of their measures. Besides we can fix a set $F$ of zero measure such that for each $A\in \ZD$ inequality \e {d31} holds for any
		\begin{equation*}
		x\in \left(A^{[1]}\setminus \bigcup_{G\in \ZD:\, \r(G)<\r(A)}G^{[1]}\right)\setminus F.
		\end{equation*} 
		Hence, it is enough to prove the bound \e {a73} for arbitrary $x\in A_0\setminus (E\cup F)$. Observe that for such $x$ there exists a ball $A\in \ZD$ such that
		\begin{equation*}
		x\in \left(A^{[1]}\setminus \bigcup_{G\in \ZD:\,\r(G)<\r(A)}G^{[1]}\right),
		\end{equation*}  
		and so from \e {d31} we conclude 
		\begin{equation}\label{a79}
		|T f_A(x)|\le |\Gamma f_A(x)|\lesssim  \ZL\lambda \langle f\rangle_{A^{[3]},r}.
		\end{equation}
		According to the property 1) one can find a unique sequence of balls 
		$A_0,A_1,A_2,\ldots A_k=A$ in $\ZD$ such that $A_j=\pr(A_{j+1})$.
		According to the properties \e {a12}-\e {a14} there exist balls $\tilde A_j$ and points $\xi_j$, $j=0,1,\ldots ,k-1$,  such that
		\begin{align}
		&A_{j+1}^{[3]}\subset \tilde A_{j+1}^{[1]}\subset A_{j}^{[2]},\label{a85} \\
		&\Gamma f_A(\xi_j)\lesssim  \ZL\lambda\langle f\rangle_{A_j^{[3]},r},\quad \xi_j\in\tilde A_{j+1},\label{h105}\\	
		&|T(f\cdot \ZI_{\tilde A_{j+1}^{[1]}\setminus A_{j+1}^{[3]}})(t)|\lesssim  \ZL\lambda\langle f\rangle_{A_j^{[3]},r},\quad t\in A_{j+1}^{[2]}. \label{a31}
		\end{align}
		Since 
		\begin{equation*}
		x\in A^{[1]}=A_k^{[1]}\subset A_{j+1}^{[2]},
		\end{equation*}
		the condition \e {a31} holds for $t=x$. Thus, we get
		\begin{equation}\label{a74}
		T(f\cdot \ZI_{\tilde A_{j+1}^{[1]}\setminus A_{j+1}^{[3]}})(x)|\lesssim  \ZL\lambda \cdot \langle f\rangle_{A_j^{[3]}}.
		\end{equation}
		We claim that
		\begin{equation}\label{t20}
		|Tf_{A_j}(x)|\le C \ZL\lambda\cdot  \langle f\rangle_{A_j^{[3]},r}+|Tf_{A_{j+1}}(x)|,
		\end{equation}
		where $C>1$ is an admissible constant. Indeed, by T1)-condition and \e {h105}, we will have
		\begin{multline}\label{a75}
		\left|T(f_{A_j}\cdot \ZI_{X\setminus \tilde A_{j+1}^{[1]}})(x)-T(f_{A_j}\cdot \ZI_{X\setminus\tilde A_{j+1}^{[1]}})(\xi_j)\right|\lesssim \ZL \langle f_{A_j} \rangle^*_{\tilde A_{j+1},r}\\
		\le \ZL M_rf_{A_j}(\xi_j)\le \Gamma f_{A_j}(\xi_j)\lesssim\ZL \lambda \cdot \langle f\rangle_{A_j^{[3]},r}.
		\end{multline}
		Besides, from \e {a74} we get
		\begin{align}
		\left|T(f_{A_j}\cdot \ZI_{\tilde A_{j+1}^{[1]}\setminus A_{j+1}^{[3]}})(x)\right|&=\left|T(f\cdot \ZI_{\tilde A_{j+1}^{[1]}\setminus A_{j+1}^{[3]}})(x)\right|\label{a76}\\
		&\lesssim  \ZL\lambda\cdot \langle f\rangle_{A_j^{[3]},r}.\nonumber
		\end{align}
		From the definition of $f_{A_j}$ (see \e {h106}) and \e {a85} it follows that
		\begin{equation}\label{h108}
		f_{A_j}\cdot \ZI_{X\setminus \tilde A_{j+1}^{[1]}}=f\cdot \ZI_{A_j^{[3]}\setminus \tilde A_{j+1}^{[1]}}=f\cdot \ZI_{A_j^{[3]}}-f\cdot \ZI_{\tilde A_{j+1}^{[1]}}.
		\end{equation}
		Thus, applying \e {a5}, \e {a75}, \e {h105}, \e {a76} and \e {h108}, we conclude	
		\begin{align*}
		\left|Tf_{A_j}(x)\right|&=\left|Tf_{A_j}(x)\right| \\
		&\le\left|T(f_{A_j}\cdot \ZI_{X\setminus \tilde A_{j+1}^{[1]}})(x)\right|+\left|T(f_{A_j}\cdot \ZI_{\tilde A_{j+1}^{[1]}})(x)\right|\\
		&\le \left|T(f_{A_j}\cdot \ZI_{X\setminus \tilde A_{j+1}^{[1]}})(x)-T(f_{A_j}\cdot \ZI_{X\setminus \tilde A_{j+1}^{[1]}})(\xi_j)\right|+\left|T(f_{A_j}\cdot \ZI_{X\setminus \tilde A_{j+1}^{[1]}})(\xi_j)\right|\\
		&\qquad+\left|T(f_{A_j}\cdot \ZI_{\tilde A_{j+1}^{[1]}\setminus A_{j+1}^{[3]}})(x)\right|+\left|T(f_{A_j}\cdot \ZI_{A_{j+1}^{[3]}})(x)\right|\\
		&\le C \ZL\lambda\cdot\langle f\rangle_{A_j^{[3]},r}+\left|Tf_{A_j}(\xi_j)\right|\\
		&\qquad +\left|T(f\cdot \ZI_{\tilde A_{j+1}^{[1]}})(\xi_j)\right|+\left|Tf_{A_{j+1}}(x)\right|\\
		&\le C \ZL\lambda\cdot\langle f\rangle_{A_j^{[3]},r}\\
		&\qquad +2T^{**}f_{A_{j}}(\xi_j)+\left|Tf_{A_{j+1}}(x)\right|\\
		&\le C\ZL\lambda\cdot\langle f\rangle_{A_j^{[3]},r}\\
		&\qquad +4\Gamma f_{A_{j}}(\xi_j)+\left|Tf_{A_{j+1}}(x)\right|\\
		&\le C \ZL\lambda\cdot\langle f\rangle_{A_j^{[3]},r}+\left|Tf_{A_{j+1}}(x)\right|,
		\end{align*}
		where $C>0$ is an admissible constant that can vary in the above inequalities. Thus we get \e {t20}. Applying \e {t20} for each $j=0,1,2,\ldots, k-1$, \e {h106} and \e {a79}, we get
		\begin{align*}
		\left|Tf(x)\right|=\left|Tf_{A_0}(x)\right|&\le C \ZL\lambda\cdot \sum_{j=0}^{k-1}\langle f\rangle_{A_j^{[3]},r}+\left|T(f\cdot \ZI_{A^{[3]}})(x)\right|\\
		&\lesssim \ZL\cdot\sum_{j=0}^{k}\langle f\rangle_{A_j^{[3]},r}\\
		&\lesssim \ZL\cdot \ZA_{\ZS,r} f(x).
		\end{align*}
		completing the proof of \trm {T1}.
	\end{proof}
	\begin{proof}[Proof of \trm {T0}]
		\trm {T0} immediately follows from \trm {T1}, \trm {T5-1} and \trm {T6},
	\end{proof}
	
	\section{Weighted estimates in abstract measure spaces}\label{S6}
	\subsection{The general case}
	Let $w$ satisfy $A_p$-condition. We denote $q=\frac{p}{p-1}$ and let $\sigma= w^{-\frac{1}{p-1}}$ be the dual weight of $ w$. Note that if $ w\in A_p$, then $\sigma= w^{1/(1-p)}\in A_q$ and 
	\begin{equation}\label{h90}
	[\sigma]_{A_q}=[w]_{A_p}^{1/(p-1)}.
	\end{equation} 
	Besides we have 
	\begin{equation*}
	[w]_{A_p}=\sup_{B\in\ZB}\frac{ w(B)}{\mu(B)}\left(\frac{\sigma(B)}{\mu(B)}\right)^{p-1}.
	\end{equation*}
	The notation $dw$ in the integrals will stand for $wd\mu$, where $\mu$ is the basic measure.  Any weight $w$ defines a measure on the basic measurable space $(X,\ZM)$ and the $w$-measure of a set $E$ is defined as $w(E)=\int_Edw$. Everywhere below we denote by $c_p$ different constants depending only on $1<p<\infty$.  In this section we shall consider maximal functions with respect to different measures. So we denote the maximal function associated to a measure $\beta$ by
	\begin{equation*}
	M_\beta f(x)=\sup_{B\in \ZB:\, x\in B} \frac{1}{\mu(B)}\int_B|f(t)|d\beta(t).
	\end{equation*}  
	Recall that $\ZA_{\ZS}$ denotes the sparse operator corresponding to the case of $r=1$.
	\begin{lemma}\label{L10}
		Let $(X,\ZM,\mu)$ be a measure space with a ball-basis $\ZB$. If $\ZS\subset \ZB$  is a $\gamma$-sparse collection ($0<\gamma<1$) and the weight $ w$ satisfies the $A_p$ condition for $1<p\le 2$, then
		\begin{equation}\label{z47}
		\|\ZA_{\ZS}\|_{L^p( w)\to L^p( w)}\le \gamma^{-1}[ w]_{A_p}^{1/(p-1)}\|M_ w\|_{L^p( w)\to L^p( w)}^{1/(p-1)}\cdot \|M_\sigma\|_{L^p(\sigma)\to L^p(\sigma)}.
		\end{equation}
	\end{lemma}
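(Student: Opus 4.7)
The strategy is a four-step reduction: duality, sparse decomposition, elementary pointwise bounds for the maximal operators, and a weighted H\"older. Assuming $f,g\ge 0$ throughout, duality rephrases the claim as a bound on $\int\ZA_\ZS f\cdot gw\,d\mu$ over $g$ with $\|g\|_{L^{p'}(w)}\le 1$. Substituting $f=F\sigma$, the identity $w\sigma^{p-1}=1$ yields $\sigma^p w=\sigma$, hence $\|F\|_{L^p(\sigma)}=\|f\|_{L^p(w)}$, and the bilinear form expands as $\sum_{A\in\ZS}\mu(A)^{-1}\bigl(\int_A F\sigma\bigr)\bigl(\int_A gw\bigr)$.

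The sparse hypothesis $\mu(A)\le\gamma^{-1}\mu(E_A)$ with pairwise disjoint $E_A$'s upgrades the sum to
\[
\sum_A\frac{1}{\mu(A)}\Bigl(\int_A F\sigma\Bigr)\Bigl(\int_A gw\Bigr)\le\gamma^{-1}\sum_A\int_{E_A}\frac{1}{\mu(A)^2}\Bigl(\int_A F\sigma\Bigr)\Bigl(\int_A gw\Bigr)d\mu(x).
\]
For $x\in A$, the very definitions of $M_\sigma$ and $M_w$ give $\mu(A)^{-1}\int_A F\sigma\le M_\sigma F(x)$ and $\mu(A)^{-1}\int_A gw\le M_w g(x)$, and the disjointness of the $E_A$'s collapses the sum into
\[
\int\ZA_\ZS(F\sigma)\cdot gw\,d\mu\le\gamma^{-1}\int M_\sigma F\cdot M_w g\,d\mu.
\]

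It remains to split this product integral into the three advertised constants via a weighted H\"older inequality. Applying H\"older with respect to the measure $\sigma\,d\mu$ to the integrand $M_\sigma F\cdot(M_w g/\sigma)\cdot\sigma$ gives
\[
\int M_\sigma F\cdot M_w g\,d\mu\le\|M_\sigma F\|_{L^p(\sigma)}\|M_w g/\sigma\|_{L^{p'}(\sigma)},
\]
and the first factor is at most $\|M_\sigma\|_{L^p(\sigma)}\|f\|_{L^p(w)}$. The second factor equals $\bigl(\int(M_w g)^{p'}w^{1/(p-1)^2}d\mu\bigr)^{1/p'}$, using the computation $\sigma^{1-p'}=w^{1/(p-1)^2}$. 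When $p=2$ the weight $w^{1/(p-1)^2}$ reduces to $w$ and the factor is simply $\|M_w g\|_{L^2(w)}\le\|M_w\|_{L^2(w)}\|g\|_{L^2(w)}$, matching the target with $1/(p-1)=1$. The main technical obstacle is the case $1<p<2$: here the surplus weight $w^{1/(p-1)^2-1}$ is a strictly positive power of $w$, and the target factor $[w]_{A_p}^{1/(p-1)}\|M_w\|_{L^p(w)}^{1/(p-1)}\|g\|_{L^{p'}(w)}$ must be obtained by combining the $L^p(w)$-boundedness of $M_w$ with the $A_p$ characteristic through an iteration argument that exploits precisely the restriction $p\le 2$ to keep the exponents on both $[w]_{A_p}$ and $\|M_w\|_{L^p(w)}$ compatible. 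Taking the $1/p'$-th root and supremum over $g$ then yields the lemma.
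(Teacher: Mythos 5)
There are two genuine gaps. First, your pointwise step $\mu(A)^{-1}\int_A F\,d\sigma\le M_\sigma F(x)$ and $\mu(A)^{-1}\int_A g\,dw\le M_w g(x)$ is only valid for maximal functions normalized by $\mu(A)$, whereas the quantities $\|M_\sigma\|_{L^p(\sigma)\to L^p(\sigma)}$ and $\|M_w\|_{L^p(w)\to L^p(w)}$ in the statement (and the bounds of \lem{L13} that make them finite and usable in \trm{T9}) refer to the self-normalized operators $\sup_{B\ni x}\sigma(B)^{-1}\int_B|\cdot|\,d\sigma$ and $\sup_{B\ni x}w(B)^{-1}\int_B|\cdot|\,dw$; this is exactly how the paper's proof uses them, via $w(B)^{-1}\int_B|g|\,dw\le M_wg(x)$ on $E_B$, and the $\mu$-normalized versions are in general not bounded on $L^p(\sigma)$, resp.\ $L^p(w)$. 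Converting your $\mu$-averages into $\sigma$- and $w$-averages costs a factor $\frac{\sigma(A)}{\mu(A)}\cdot\frac{w(A)}{\mu(A)}$ per ball, and taming that factor is precisely where the $A_p$ condition must enter; your decomposition never invokes $[w]_{A_p}$ at all, which is why the characteristic cannot emerge with the correct power later. Even your $p=2$ case therefore bounds the bilinear form by operator norms of the wrong maximal operators, not the ones in \e{z47}.

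Second, the range $1<p<2$ is not actually proved: you acknowledge the leftover weight $w^{1/(p-1)^2-1}$ and defer to an unspecified ``iteration argument,'' but that is the entire difficulty. The paper closes it with a different device: since $p-1\le 1$, one has $(\ZA_{\ZS}f)^{p-1}\le\sum_{B\in\ZS}\langle f\rangle_B^{p-1}\ZI_B$, hence $\|\ZA_{\ZS}f\|_{L^p(w)}^p\le\big\|\sum_B\langle f\rangle_B^{p-1}\ZI_B\big\|_{L^q(w)}^q$ with $q=p/(p-1)$; dualizing this quantity in $L^q(w)$, writing each term as $\big(\sigma(B)^{-1}\int_Bf\big)^{p-1}\cdot\big(w(B)^{-1}\int_B|g|\,dw\big)\cdot\frac{w(B)}{\mu(B)}\big(\frac{\sigma(B)}{\mu(B)}\big)^{p-1}\cdot\mu(B)$ so that the third factor is $\le[w]_{A_p}$, then using sparseness together with $\mu(E_B)\le (w(E_B))^{1/p}(\sigma(E_B))^{1/q}$, H\"older in the sum, and finally the self-normalized maximal bounds, produces the exponents $1/(p-1)$ on $[w]_{A_p}$ and on $\|M_w\|_{L^p(w)\to L^p(w)}$ upon taking roots. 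Nothing in your outline substitutes for this linearization step, so the proposal as written does not establish the lemma.
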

	\begin{proof}
		Applying the H\"{o}lder inequality, for a measurable $E\in \ZM$ we have
		\begin{align}
		\mu(E)=&\int_E w^{1/p}\cdot  w^{-1/p}d\mu\label{h58}\\
		&\le\left(\int_E w d\mu\right)^{1/p}\cdot \left(\int_E w^{-\frac{1}{p-1}} d\mu\right)^{1/q}\nonumber\\
		&=( w(E))^{1/p}(\sigma(E))^{1/q}.\nonumber
		\end{align}
		Now suppose $\ZS$ is a sparse collection of balls and $f\in L^p(w)$ is positive. Using the inequality $(\sum a_k)^{p-1}\le \sum a_k^{p-1}$ for $1<p<2$, we obtain
		\begin{align}
		\|\ZA_{\ZS}f\|_{L^p( w)}^p&=\int_X\left((\ZA_{\ZS}f)^{p-1}\right)^\frac{p}{p-1} w d\mu\label{h60}\\
		&=\int_X\left(\left(\sum_{B\in\ZS}\langle f\rangle_B\cdot \ZI_{B}\right)^{p-1}\right)^{q} w d\mu\nonumber\\
		&\le\int_X\left(\sum_{B\in\ZS}\langle f\rangle_B^{p-1}\cdot \ZI_{B}\right)^{q} w d\mu\nonumber\\
		&=\left\|\sum_{B\in\ZS}\langle f\rangle_B^{p-1}\cdot \ZI_{B}\right\|_{L^q( w)}^q.\nonumber
		\end{align} 
		There is a function $g\in L^p(w)$ with $\|g\|_{L^p(w)}=1$ such that
		\begin{align*}
		\left\|\sum_{B\in\ZS}\langle f\rangle_B^{p-1}\cdot \ZI_{B}\right\|_{L^q( w)}&=\int_X\sum_{B\in\ZS}\langle f\rangle_B^{p-1}\cdot \ZI_{B}g wd\mu\\
		&= \sum_{B\in\ZS}\left(\frac{1}{\mu(B)}\int_Bfd\mu\right)^{p-1}\int_Bgdw\nonumber\\
		&=\sum_{B\in\ZS}\left(\frac{1}{\sigma(B)}\int_Bfd\mu\right)^{p-1}\times\nonumber\\
		&\times\frac{1}{ w(B)}\int_B|g|d w\cdot \frac{ w(B)}{\mu(B)}\left(\frac{\sigma(B)}{\mu(B)}\right)^{p-1}\cdot \mu(B).\nonumber
		\end{align*}
		Thus, applying \e {h58}, $A_p$-condition and H\"{o}lder's inequality, we get
		\begin{align}
		&\left\|\sum_{B\in\ZS}\langle f\rangle_B^{p-1}\cdot \ZI_{B}\right\|_{L^q( w)}\label{h61}\\
		&\quad \le \gamma^{-1}[ w]_{A_p}\sum_{B\in\ZS}\left(\frac{1}{\sigma(B)}\int_Bfd\mu\right)^{p-1}\times \frac{1}{ w(B)}\int_B|g|d w\cdot \mu(E_B)\nonumber\\
		&\quad\le\gamma^{-1}[ w]_{A_p}\sum_{B\in\ZS}\left(\frac{1}{\sigma(B)}\int_Bfd\mu\right)^{p-1}(\sigma(E_B))^{1/q}\times \nonumber \\
		&\qquad \qquad \times\frac{1}{ w(B)}\int_B|g|d w\cdot ( w(E_B))^{1/p}\nonumber\\
		&\quad\le\gamma^{-1}[ w]_{A_p}\left(\sum_{B\in\ZS}\left(\frac{1}{\sigma(B)}\int_Bf\sigma^{-1}d\sigma\right)^p\sigma(E_B)\right)^{1/q}\times\nonumber \\
		&\qquad\qquad \times \left(\sum_{B\in\ZS}\left(\frac{1}{ w(B)}\int_B|g|d w\right)^p\cdot  w(E_B)\right)^{1/p}.\nonumber
		\end{align}
		The last two factors can be estimated by the maximal functions $M_\sigma$ and $M_ w$ respectively. 
		Namely, for the second one we have
		\begin{align}
		\left(\sum_{B\in\ZS}\left(\frac{1}{ w(B)}\int_B|g|d w\right)^p\cdot  w(E_B)\right)^{1/p}&\le \left\|M_ w  g\right\|_{L^p(w)}\label{h68}\\
		&\le \|M_ w\|_{L^p( w)\to L^p( w)}\cdot \|g\|_{L^p( w)}\nonumber\\
		&=\|M_ w\|_{L^p( w)\to L^p( w)}.\nonumber
		\end{align}
		Similarly, the first factor is estimated by
		\begin{align}
		&\left(\sum_{B\in\ZS}\left(\frac{1}{\sigma(B)}\int_Bf\sigma^{-1}d\sigma\right)^p\sigma(E_B)\right)^{1/q}\label{h69}\\
		&\qquad\qquad\le \|M_\sigma\|_{L^p(\sigma)\to L^p(\sigma)}^{p/q}\cdot \|f\sigma^{-1}\|_{L^p(\sigma)}^{p/q}\nonumber\\
		&\qquad\qquad=\|M_\sigma\|_{L^p(\sigma)\to L^p(\sigma)}^{p/q}\cdot\left(\int_X f^p\sigma^{-p}\sigma d\mu\right)^{1/q}\nonumber\\
		&\qquad\qquad=\|M_\sigma\|_{L^p(\sigma)\to L^p(\sigma)}^{p/q}\cdot\left(\int_X f^pd w\right)^{1/q}\nonumber\\
		&\qquad\qquad=\|M_\sigma\|_{L^p(\sigma)\to L^p(\sigma)}^{p/q}\cdot\|f\|_{L^p( w)}^{p/q}.\nonumber
		\end{align}
		From \e {h60}, \e {h61}, \e {h68} and \e {h69} we immediately get \e {z47}.
	\end{proof}
	\begin{lemma}\label{L11}
		Let $(X,\ZM,\mu)$ be measure space with a ball-basis $\ZB$, $1<p,q<\infty $ and $p^{-1}+q^{-1}=1$. If $\ZS$  is a sparse collection and the weight $ w$ satisfies the $A_p$ condition, then
		\begin{equation}\label{h89}
		\|\ZA_{\ZS}\|_{L^p( w)\to L^p( w)}=\|\ZA_{\ZS}\|_{L^q(\sigma)\to L^q(\sigma)},
		\end{equation}
		where $\sigma$ is the dual weight of $ w$. 
	\end{lemma}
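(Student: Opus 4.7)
The plan is to combine two ingredients: the self-adjointness of the sparse operator with respect to the unweighted $\mu$-pairing, and the duality between $L^p(w)$ and $L^q(\sigma)$ realised through this same pairing.

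For the first ingredient, I would simply expand
\begin{equation*}
\int_X \ZA_\ZS f\cdot g\, d\mu=\sum_{B\in\ZS}\langle f\rangle_B\langle g\rangle_B\,\mu(B),
\end{equation*}
which is manifestly symmetric in $f$ and $g$, so $\int \ZA_\ZS f\cdot g\,d\mu=\int f\cdot \ZA_\ZS g\,d\mu$. (One can restrict to nonnegative $f,g$ throughout since $\ZA_\ZS$ is a positive operator.) For the second ingredient, the identity $\sigma=w^{-1/(p-1)}$ together with $1/p+1/q=1$ gives $w^{1/p}\sigma^{1/q}=1$, because $1/(q(p-1))=1/p$. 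Setting $F=fw^{1/p}$ and $G=g\sigma^{1/q}$ then yields $\|F\|_{L^p(\mu)}=\|f\|_{L^p(w)}$, $\|G\|_{L^q(\mu)}=\|g\|_{L^q(\sigma)}$, and $fg=FG$ pointwise. Ordinary $L^p(\mu)$--$L^q(\mu)$ duality therefore upgrades to
\begin{equation*}
\|g\|_{L^q(\sigma)}=\sup_{\|f\|_{L^p(w)}\le 1}\int_X fg\,d\mu,\qquad \|f\|_{L^p(w)}=\sup_{\|g\|_{L^q(\sigma)}\le 1}\int_X fg\,d\mu.
\end{equation*}

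With these two tools, I would write the chain
\begin{align*}
\|\ZA_\ZS\|_{L^p(w)\to L^p(w)}
&=\sup_{\|f\|_{L^p(w)}=1}\|\ZA_\ZS f\|_{L^p(w)}\\
&=\sup_{\|f\|_{L^p(w)}=1,\,\|g\|_{L^q(\sigma)}=1}\int_X \ZA_\ZS f\cdot g\,d\mu\\
&=\sup_{\|f\|_{L^p(w)}=1,\,\|g\|_{L^q(\sigma)}=1}\int_X f\cdot \ZA_\ZS g\,d\mu\\
&=\sup_{\|g\|_{L^q(\sigma)}=1}\|\ZA_\ZS g\|_{L^q(\sigma)}=\|\ZA_\ZS\|_{L^q(\sigma)\to L^q(\sigma)},
\end{align*}
where the first and last equalities apply the duality formula of ingredient 2 (the first to $\ZA_\ZS f$ in $L^p(w)$, the last to $\ZA_\ZS g$ viewed in $L^q(\sigma)$), and the middle equality uses self-adjointness.

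I do not anticipate any real obstacle: everything reduces to bookkeeping about the weight exponents. The one point requiring care is the case in which one of the two operator norms is infinite, which should be handled by noting that the bilinear supremum $\sup \int \ZA_\ZS f\cdot g\,d\mu$ over unit balls is the common value in $[0,\infty]$, so both norms are finite or infinite together. Restricting throughout to nonnegative simple $f,g$ (a dense subclass in each space by the ball-basis properties) also avoids any measurability subtlety coming from the extended $L^p$-norm framework of Section~2.
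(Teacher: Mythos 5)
Your proof is correct and takes essentially the same route as the paper's: both arguments rest on the self-adjointness of $\ZA_\ZS$ with respect to the unweighted pairing $\int_X \ZA_\ZS f\cdot g\,d\mu=\sum_{B\in\ZS}\langle f\rangle_B\langle g\rangle_B\mu(B)$ combined with weight duality. The paper merely packages the duality through $L^q(w)$ and verifies by hand the identities $\bigl\|\ZA_\ZS(gw)/w\bigr\|_{L^q(w)}=\|\ZA_\ZS(gw)\|_{L^q(\sigma)}$ and $\|gw\|_{L^q(\sigma)}=\|g\|_{L^q(w)}$, which is exactly your observation that $L^p(w)$ and $L^q(\sigma)$ are dual under the $\mu$-pairing.
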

	\begin{proof}
		We have 
		\begin{equation*}
		\|\ZA_{\ZS}\|_{L^p( w)\to L^p( w)}=\sup_{f\in L^p( w),\,g\in L^q( w)}\int_X\ZA_{\ZS}f\cdot gd w.
		\end{equation*}
		By the duality argument for $f\in L^p( w)$ and $g\in L^q( w)$ we get the estimate
		\begin{align*}
		\int_X\ZA_{\ZS}f\cdot gd w&=\int_X\ZA_{\ZS}f\cdot g w d\mu=\sum_{B\in\ZS}\frac{1}{\mu(B)}\int_Bfd\mu\int_Bg w d\mu\\
		&=\int_X\ZA_{\ZS}(g w)\cdot f  d\mu=\int_X\frac{\ZA_{\ZS}(g w)}{ w}\cdot f   w d\mu=\int_X\frac{\ZA_{\ZS}(g w)}{ w}\cdot f  d w \\
		&\le \left\|\frac{\ZA_{\ZS}(g w)}{ w}\right\|_{L^q( w)}\|f\|_{L^p( w)}=\left(\int_X (\ZA_{\ZS}(g w))^q w^{-q}d w\right)^{1/q}\cdot \|f\|_{L^p( w)}\\
		&=\left(\int_X (\ZA_{\ZS}(g w))^q\sigma d\mu\right)^{1/q}\cdot \|f\|_{L^p( w)}\\
		&\le \|\ZA_{\ZS}\|_{L^q(\sigma)\to L^q(\sigma)}\|g w\|_{L^q(\sigma)}\|f\|_{L^p( w)}\\
		&=\|\ZA_{\ZS}\|_{L^q(\sigma)\to L^q(\sigma)}\left(\int_X(g w)^q\sigma d\mu\right)^{1/q} \|f\|_{L^p( w)}\\
		&=\|\ZA_{\ZS}\|_{L^q(\sigma)\to L^q(\sigma)}\|g\|_{L^q( w)}\|f\|_{L^p( w)},
		\end{align*}
		which implies $\|\ZA_{\ZS}\|_{L^p( w)\to L^p( w)}\le \|\ZA_{\ZS}\|_{L^q(\sigma)\to L^q(\sigma)}$. Similarly we have the reverse inequality and so \e {h89}.
	\end{proof}
	\begin{lemma}\label{L6}
		Let $(X,\ZM,\mu)$ be a measure space with a ball-basis $\ZB$ and $ w$ be a weight satisfying the $A_p$-condition, $1<p<\infty$, with respect to the measure $\mu$. Then for any balls $A,B$ with $A\subset B$ we have
		\begin{equation}\label{h10}
		\frac{ w(B)}{ w(A)}\le 2^p\cdot [w]_{A_p}\cdot \left(\frac{\mu(B)}{\mu(A)}\right)^p.
		\end{equation}
	\end{lemma}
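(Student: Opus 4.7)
The plan is to combine H\"{o}lder's inequality applied on the smaller ball $A$ with the $A_p$ characterization already rewritten at the start of \sect{S6}. Let $\sigma=w^{-1/(p-1)}$ denote the dual weight and $q=p/(p-1)$. As in \e{h58}, factoring $1=w^{1/p}\cdot w^{-1/p}$ and applying H\"{o}lder's inequality on $A$ with exponents $p$ and $q$ yields
\begin{equation*}
\mu(A)=\int_A w^{1/p}\cdot w^{-1/p}\,d\mu\le w(A)^{1/p}\,\sigma(A)^{1/q},
\end{equation*}
and raising this to the $p$-th power gives $\mu(A)^p\le w(A)\,\sigma(A)^{p-1}$, equivalently $1/w(A)\le \sigma(A)^{p-1}/\mu(A)^p$.

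Next I would multiply by $w(B)$ and use the trivial monotonicity $\sigma(A)\le \sigma(B)$ (valid since $A\subset B$ and $\sigma$ is a nonnegative measure) together with the $A_p$-bound
\begin{equation*}
\frac{w(B)}{\mu(B)}\left(\frac{\sigma(B)}{\mu(B)}\right)^{p-1}\le [w]_{A_p}
\end{equation*}
to conclude
\begin{equation*}
\frac{w(B)}{w(A)}\le \frac{w(B)\,\sigma(A)^{p-1}}{\mu(A)^p}\le \frac{w(B)\,\sigma(B)^{p-1}}{\mu(B)^p}\cdot\left(\frac{\mu(B)}{\mu(A)}\right)^p\le [w]_{A_p}\left(\frac{\mu(B)}{\mu(A)}\right)^p.
\end{equation*}

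This is in fact sharper than \e{h10}; the factor $2^p$ in the statement is slack that the argument does not require, so obtaining the stated inequality is immediate. I do not expect a serious obstacle: the proof is a two-line application of H\"{o}lder's inequality combined with the definition of $[w]_{A_p}$, and uses nothing about the ball-basis $\ZB$ beyond the fact that its balls are $\mu$-measurable and that $A\subset B$ implies $\sigma(A)\le \sigma(B)$.
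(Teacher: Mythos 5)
Your proof is correct, and it follows the same overall strategy as the paper --- lower-bound the $\sigma$-mass over $B$ through the smaller ball $A$ and then feed that into the $A_p$ condition evaluated on $B$ --- but the key step is carried out differently. The paper argues via Chebyshev's inequality: with $a=w(A)/\mu(A)$ it notes that $\mu\{t\in A:\,w\le 2a\}>\mu(A)/2$ and integrates $w^{1/(1-p)}$ over that level set, which yields $\bigl(\int_Bw^{1/(1-p)}\bigr)^{p-1}\ge \mu(A)^p/(2^pw(A))$ and hence the stated bound with the factor $2^p$. You instead use H\"older's inequality on $A$, i.e.\ the duality relation $\mu(A)^p\le w(A)\,\sigma(A)^{p-1}$ (which is exactly the inequality \e{h58} the paper records later in the proof of Lemma \ref{L10}), together with $\sigma(A)\le\sigma(B)$ and the $A_p$ condition at $B$. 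This gives the cleaner conclusion
\begin{equation*}
\frac{w(B)}{w(A)}\le [w]_{A_p}\left(\frac{\mu(B)}{\mu(A)}\right)^p,
\end{equation*}
so your constant is sharper and the $2^p$ in \e{h10} is indeed slack; the stated inequality follows a fortiori. Neither argument uses anything about the ball-basis beyond measurability of the balls and $A\subset B$, so both are equally general; the only real trade-off is that the Chebyshev route avoids naming the dual weight, at the price of the factor $2^p$, while your route reuses a computation the paper needs anyway.
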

	\begin{proof}
		Denote 
		\begin{equation}\label{h11}
		a=\frac{1}{\mu(A)}\int_A w d \mu=\frac{ w(A)}{\mu(A)}.
		\end{equation}
		By Chebishev's inequality we find
		\begin{equation*}
		\mu\{t\in A:\, w\le 2a\}>\frac{\mu(A)}{2}.
		\end{equation*} 
		Thus we get
		\begin{equation}\label{h64}
		\left(\int_B w^{1/(1-p)}\right)^{p-1}\ge \left(\frac{\mu(A)}{2}(2a)^{1/(1-p)}\right)^{p-1}
		=\frac{(\mu(A))^{p}}{2^{p} w(A)}
		\end{equation}
		and then by \e {h63} and \e {h64} we obtain
		\begin{align*}
		[w]_{A_p}&\ge \frac{1}{\mu(B)}\int_B w \cdot \left(\frac{1}{\mu(B)}\int_B w^{1/(1-p)}\right)^{p-1}\\
		&\ge \frac{ w(B)}{(\mu(B))^p}\cdot \left(\int_B w^{1/(1-p)}\right)^{p-1}\\
		&\ge 2^{-p}\cdot\frac{ w(B)}{ w(A)}\cdot\frac{( w(A))^p}{(\mu(B))^p}
		\end{align*}
		and so \e {h11}.
	\end{proof}
	\begin{lemma}\label{L13}
		Let $(X,\ZM,\mu)$ be a measure space with a ball-basis $\ZB$ and $ w$ be a weight, satisfying the $A_p$-condition, $1<p<\infty$. Then 
		the maximal function associated to the measure $ w$ satisfies the inequalities
		\begin{align}
		&\|M_ w\|_{L^1( w)\to L^{1,\infty}( w)}\le (2\ZK)^p\cdot [w]_{A_p},\label{h14}\\
		&\|M_ w\|_{L^{p'}( w)\to L^{p'}( w)}\le c(p,p')\cdot\ZK^{p/p'} [w]_{A_p}^{1/p'},\quad 1<p'< \infty.\label{h65}
		\end{align} 
		where the constant $c(p,p')$ depends on $p$ and $p'$.
	\end{lemma}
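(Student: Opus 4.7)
The plan is to establish the weak-type bound \e{h14} directly along the scheme used in the proof of \trm{T1-1}, with \lem{L6} supplying the one new ingredient, and then to obtain the strong bound \e{h65} by Marcinkiewicz interpolation (\trm{M}) against the trivial strong-$L^\infty(w)$ estimate $\|M_w\|_{L^\infty(w)}\le 1$.

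For \e{h14}, set $E=\{x:M_w f(x)>\lambda\}$; it may fail to be $\ZM$-measurable. For every $x\in E$ pick a ball $B(x)\ni x$ witnessing the strict inequality defining $E$, i.e.\ $\int_{B(x)}|f|\,dw>\lambda\,w(B(x))$. Given any fixed ball $B\in\ZB$, apply \lem{L1-1} to the cover $\{B(x):x\in E\cap B\}$ and extract a pairwise disjoint subsequence $\{B_k\}$ with $E\cap B\subset \bigcup_k B_k^{[1]}$. The crucial new step is \lem{L6} applied with $A=B_k$, $B=B_k^{[1]}$: combined with the two-balls relation $\mu(B_k^{[1]})\le\ZK\mu(B_k)$ this yields
\[
w(B_k^{[1]})\le 2^p[w]_{A_p}\bigl(\mu(B_k^{[1]})/\mu(B_k)\bigr)^p w(B_k)\le (2\ZK)^p[w]_{A_p}\, w(B_k).
\]
Summing over $k$, using disjointness of the $B_k$ and the selection inequality $\lambda w(B_k)<\int_{B_k}|f|\,dw$, we obtain
\[
w\!\left(\bigcup_k B_k^{[1]}\right)\le (2\ZK)^p[w]_{A_p}\,\lambda^{-1}\int_X|f|\,dw.
\]
The exhaustion argument at the end of the proof of \trm{T1-1}, based on \lem{L1} to represent $X$ as an increasing union of bounded regions, then transfers this bound from $E\cap B$ to the outer $w$-measure of the full level set $E$, yielding \e{h14}.

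For \e{h65}, the estimate $\|M_w f\|_{L^\infty(w)}\le \|f\|_{L^\infty(w)}$ is immediate from the definition of $M_w$. Applying the Marcinkiewicz interpolation theorem \trm{M} between the weak-$L^1(w)$ bound just proved, with constant $(2\ZK)^p[w]_{A_p}$, and the strong-$L^\infty(w)$ bound with constant $1$, we obtain for $1<p'<\infty$
\[
\|M_w\|_{L^{p'}(w)}\le c_{p'}\bigl((2\ZK)^p[w]_{A_p}\bigr)^{1/p'}\le c(p,p')\,\ZK^{p/p'}[w]_{A_p}^{1/p'},
\]
after absorbing the factor $2^{p/p'}$ into $c(p,p')$, which is \e{h65}.

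No substantive difficulty is expected. The $A_p$ hypothesis enters the argument only through \lem{L6}, which converts the $\mu$-comparison $\mu(B_k^{[1]})\le \ZK\mu(B_k)$ into the $w$-comparison with factor $(2\ZK)^p[w]_{A_p}$; from there the weak-$L^1(w)$ bound telescopes exactly as in \trm{T1-1} and Marcinkiewicz interpolation closes the argument. The only mild technicality, namely the possible non-$\ZM$-measurability of $E$ and the need to pass through outer $w$-measure, is handled in the same manner as in the proof of \trm{T1-1}.
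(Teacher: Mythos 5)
Your proposal is correct and follows essentially the same route as the paper: the same Vitali-type selection via \lem{L1-1}, the same use of \lem{L6} to convert $\mu(B_k^{[1]})\le\ZK\mu(B_k)$ into $w(B_k^{[1]})\le(2\ZK)^p[w]_{A_p}w(B_k)$, the same exhaustion through \lem{L1} to pass to the outer $w$-measure of the level set, and Marcinkiewicz interpolation for \e{h65}. The only difference is that you make the trivial $L^\infty(w)$ endpoint explicit, which the paper leaves implicit.
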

	\begin{proof}
		Denote
		\begin{equation*}
		E=\{ x\in B:\, M_ w f(x)>\lambda\}.
		\end{equation*}
		For any $x\in  E $ there exists a ball $B(x)\subset X$ such that
		\begin{equation*}
		x\in B(x),\quad \frac{1}{ w(B(x))}\int_{B(x)}|f|d w>\lambda.
		\end{equation*}
		Given ball $B\in \ZB$ consider the collection of balls $\{B(x):\, x\in E\cap B\}$. Apply \lem{L1-1}, we find a sequence of pairwise disjoint balls $\{B_k\}$ taken from this collection satisfying 
		\begin{equation*}
		E \cap B\subset \bigcup_kB_k^{[1]}.
		\end{equation*}
		Note that $B_k^{[1]}$ as usual is defined with respect to the measure $\mu$. From \lem {L6} we easily get $ w(B_k^{[1]})\le (2\ZK)^p\cdot [w]_{A_p} w(B_k)$ and hence
		\begin{align*}
		w\left(\cup_k B^{[1]}_k\right)	&\le \sum_k  w(B^{[1]}_k)\\
		&\le (2\ZK)^p\cdot [w]_{A_p} \sum_k  w(B_k)\\
		&\le (2\ZK)^p\cdot [w]_{A_p}\frac{1}{\lambda}\sum_k \int_{B_k}|f|d w\\
		&\le \frac{(2\ZK)^p\cdot [w]_{A_p}}{\lambda}\int_{X }|f|d w.
		\end{align*}
		Similarly as in the proof of \trm {T1-1}, thus we conclude 
		\begin{equation*}
		w^*(E)\le \frac{(2\ZK)^p\cdot [w]_{A_p}}{\lambda}\int_{X }|f|d w
		\end{equation*}
		and so \e {h14}. Applying \trm {M} (Marcinkiewicz interpolation theorem), we then get \e {h65}.
	\end{proof}
	\begin{theorem}\label{T9}
		If $\ZS$  is a $\gamma$-sparse collection and the weight $ w$ satisfies the $A_p$ condition for some $1<p<\infty$, then the corresponding sparse operator satisfies the bound
		\begin{equation}\label{h88}
		\|\ZA_{\ZS}f\|_{L^p( w)\to L^p( w)}\le c(p,\ZK)\cdot \gamma^{-1}[ w]_{A_p}^{\max\left\{\frac{p+2}{p(p-1)},\frac{3p-2}{p}\right\}}.
		\end{equation}
	\end{theorem}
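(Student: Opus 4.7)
The plan is to split into the cases $1<p\le 2$ and $p\ge 2$, using \lem{L10} directly in the first case and then invoking the duality identity \e{h89} of \lem{L11} to reduce the second case to the first. All quantitative information about the maximal operator will come from \lem{L13}, together with the identity \e{h90} that relates the $A_q$ characteristic of the dual weight $\sigma$ to $[w]_{A_p}$.

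Step one: assume $1<p\le 2$. By \lem{L10} one has
\begin{equation*}
\|\ZA_{\ZS}\|_{L^p(w)\to L^p(w)}\le \gamma^{-1}[w]_{A_p}^{1/(p-1)}\,\|M_w\|_{L^p(w)\to L^p(w)}^{1/(p-1)}\,\|M_\sigma\|_{L^p(\sigma)\to L^p(\sigma)}.
\end{equation*}
Since $w\in A_p$, \lem{L13} with $p'=p$ gives $\|M_w\|_{L^p(w)\to L^p(w)}\le c(p,\ZK)[w]_{A_p}^{1/p}$. Since $\sigma\in A_q$ with $q=p/(p-1)\ge 2$, applying \lem{L13} to the weight $\sigma$ (so the role of $p$ there is played by $q$, and the role of $p'$ by our $p$) yields $\|M_\sigma\|_{L^p(\sigma)\to L^p(\sigma)}\le c(p,\ZK)[\sigma]_{A_q}^{1/p}=c(p,\ZK)[w]_{A_p}^{1/(p(p-1))}$ by \e{h90}. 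Multiplying out the three factors produces the exponent
\begin{equation*}
\frac{1}{p-1}+\frac{1}{p(p-1)}+\frac{1}{p(p-1)}=\frac{p+2}{p(p-1)},
\end{equation*}
which is the first exponent in the statement.

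Step two: assume $p\ge 2$. Set $q=p/(p-1)\in(1,2]$ and $\sigma=w^{-1/(p-1)}\in A_q$ with $[\sigma]_{A_q}=[w]_{A_p}^{1/(p-1)}$. By the duality identity \e{h89} of \lem{L11},
\begin{equation*}
\|\ZA_{\ZS}\|_{L^p(w)\to L^p(w)}=\|\ZA_{\ZS}\|_{L^q(\sigma)\to L^q(\sigma)},
\end{equation*}
and the right-hand side falls under the already-proved case applied to the weight $\sigma$ and exponent $q$. That case gives the bound $c(p,\ZK)\gamma^{-1}[\sigma]_{A_q}^{(q+2)/(q(q-1))}$. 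A short calculation shows $q(q-1)=p/(p-1)^2$ and $q+2=(3p-2)/(p-1)$, so the exponent of $[\sigma]_{A_q}$ equals $(3p-2)(p-1)/p$; substituting $[\sigma]_{A_q}=[w]_{A_p}^{1/(p-1)}$ turns this into $[w]_{A_p}^{(3p-2)/p}$, the second exponent in the statement.

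The proof is mostly bookkeeping; there is no real obstacle beyond keeping track of the exponents and making sure \lem{L13} is quoted with the correct pair $(p,p')$ in each instance. The only point where one has to be careful is the case $p=2$, where the two exponents $(p+2)/(p(p-1))$ and $(3p-2)/p$ both equal $2$, so the two cases agree on the overlap and the maximum in the statement is indeed attained by whichever side of $p=2$ one is on.
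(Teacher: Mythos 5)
Your proposal is correct and follows essentially the same route as the paper: Lemma \ref{L10} combined with Lemma \ref{L13} and \eqref{h90} for $1<p\le 2$, then the duality identity \eqref{h89} of Lemma \ref{L11} to transfer the bound to $2<p<\infty$, with the identical exponent bookkeeping. No gaps.
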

	\begin{proof}
		First we suppose that $1<p\le 2$. Applying \lem {L10}, \lem {L13} and \e {h90}, we obtain
		\begin{align*}
		\|\ZA_{\ZS}\|_{L^p( w)\to L^p( w)}&\le c(p,\ZK)\gamma^{-1}[ w]_{A_p}^{1/(p-1)}\cdot [ w]_{A_p}^{1/p(p-1)}\cdot  [ \sigma]_{A_q}^{1/p}\\
		&=c(p,\ZK)\gamma^{-1}[ w]_{A_p}^\frac{p+2}{p(p-1)}
		\end{align*}
		and so \e {h88}. If $2<p<\infty$, then by \lem{L11} and \e {h90} we obtain
		\begin{multline*}
		\|\ZA_{\ZS}f\|_{L^p( w)\to L^p( w)}
		=\|\ZA_{\ZS}f\|_{L^q(\sigma)\to L^q(\sigma)}\\
		\le c(q,\ZK)\gamma^{-1}[\sigma]_{A_q}^{\frac{q+2}{q(q-1)}}=c(p,\ZK)\gamma^{-1}[ w]_{A_p}^\frac{3p-2}{p}.
		\end{multline*}
		Theorem is proved.
	\end{proof}
	Combining \trm {T1} with \trm {T9}, we obtain
	\begin{theorem}\label{T10}
		If $(X,\ZM,\mu)$ is a measure space with a ball-basis $\ZB$ and the operator $T\in\BO_\ZB(X)$ satisfies weak-$L^1$ inequality, then
		\begin{equation*}
		\|Tf\|_{L^p( w)\to L^p( w)}\le c(p,\ZK)(\ZL_1+\ZL_2+\|T\|_{L^1\to L^{1,\infty}}) [ w]_{A_p}^{\max\left\{\frac{p+2}{p(p-1)},\frac{3p-2}{p}\right\}}.
		\end{equation*}
	\end{theorem}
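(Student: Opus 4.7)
The plan is to combine the pointwise sparse domination of Theorem~\ref{T1} (applied with $r=1$) with the weighted norm inequality for sparse operators in Theorem~\ref{T9}, and then promote the resulting ball-localised estimate to a global one by an exhaustion argument. Fix first a bounded function $f$ supported in a single ball of $\ZB$ (so that $f\in L^1(X,\mu)$ automatically, since every ball has finite $\mu$-measure), and let $B\in\ZB$ be arbitrary. Theorem~\ref{T1} supplies a family $\ZS=\ZS^1\cup\ZS^2\subset\ZB$, a union of two $\gamma$-sparse subfamilies with admissible $\gamma$, such that
$$
|Tf(x)|\le c(\ZK)\bigl(\ZL_1+\ZL_2+\|T\|_{L^1\to L^{1,\infty}}\bigr)\,\ZA_{\ZS}f(x)\qquad\text{for a.e.\ }x\in B.
$$
Since $\ZA_{\ZS}\le \ZA_{\ZS^1}+\ZA_{\ZS^2}$, applying Theorem~\ref{T9} to each of the two $\gamma$-sparse pieces and adding the results yields
$$
\|Tf\cdot\ZI_B\|_{L^p(w)}\le c'(p,\ZK)\bigl(\ZL_1+\ZL_2+\|T\|_{L^1\to L^{1,\infty}}\bigr)\,[w]_{A_p}^{\max\{\frac{p+2}{p(p-1)},\frac{3p-2}{p}\}}\,\|f\|_{L^p(w)},
$$
with an implicit constant that depends only on $p$ and $\ZK$ and, crucially, is independent of $B$.

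Next I would remove the cut-off by $\ZI_B$. Using Lemma~\ref{L1} I fix a sequence $G_n\in\ZB$ with $\mu(G_n)\to\sup_{A\in\ZB}\mu(A)$ all meeting a common ball; the ``moreover'' clause of that lemma says every ball of $\ZB$ is eventually contained in $G_n^{[1]}$, so together with B2) this forces each $x\in X$ to belong to $G_n^{[1]}$ for all sufficiently large $n$. Setting $B_n=G_n^{[1]}$ and applying the uniform estimate of the previous paragraph to each $B_n$, Fatou's lemma applied to the pointwise convergence $|Tf|\cdot\ZI_{B_n}\to |Tf|$ promotes the ball-localised bound to the global inequality $\|Tf\|_{L^p(w)}\le K\|f\|_{L^p(w)}$ with the claimed $K$, for the class of $f$ under consideration.

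The only genuinely delicate point, and the one I would regard as the main obstacle, is that Theorem~\ref{T1} is stated under the hypothesis $f\in L^r(X)=L^1(X,\mu)$, while the target space $L^p(w)$ need not embed into $L^1(\mu)$ globally. This is precisely why I first restrict to bounded functions supported in a single ball, where $L^1(\mu)$-membership is automatic; a standard density argument then extends the bounded linear inequality to all of $L^p(w)$, using that such functions form a dense subclass. All remaining steps are routine bookkeeping, since the substantive analytic content is already packaged in Theorems~\ref{T1} and~\ref{T9}.
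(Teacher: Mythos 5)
Your proposal is correct and follows essentially the same route as the paper: the paper's entire proof of this theorem is the single remark that it follows by combining Theorem~\ref{T1} (sparse domination, with $r=1$) with Theorem~\ref{T9} (weighted bounds for sparse operators). The additional bookkeeping you supply --- splitting $\ZA_{\ZS}$ into its two $\gamma$-sparse pieces, exhausting $X$ by the balls $G_n^{[1]}$ of Lemma~\ref{L1} with Fatou, and handling the $L^1(\mu)$ versus $L^p(w)$ domain issue by truncation and density --- is exactly the detail the paper leaves implicit, and it is sound.
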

	\subsection{The case of Besicovitch condition}
	\begin{definition}
		Let $\ZB$ be a family of sets of an arbitrary set $X$. We say $\ZB$ satisfies the Besicovitch $D$-condition with a constant $D\in \ZN$, if for any collection $\ZA\subset \ZB$ one can find a subscollection $\ZA'\subset \ZA$ such that
		\begin{align*}
		&\bigcup_{A\in \ZA}A= \bigcup_{A\in \ZA'}A,\\
		&\sum_{ A\in \ZA'}\ZI_A(x)\le D.
		\end{align*}
		We say $\ZB$ is martingale system if $D=1$. 
	\end{definition}
	\begin{theorem}\label{T12}
		Let $(X,\ZM,\mu)$ be a measure space and the collection of measurable sets $\ZB\subset \ZM$ satisfy the Besicovitch $D$-condition. Then the maximal operator $M_\mu$ satisfies the bounds
		\begin{align}
		&\|M_\mu\|_{L^1(\mu)\to L^{1,\infty}(\mu)}\le D,\label{h16}\\
		&\|M_\mu\|_{L^p(\mu)\to L^p(\mu)}\le c_p\cdot D^{1/p},\quad 1<p<\infty.\nonumber
		\end{align} 
	\end{theorem}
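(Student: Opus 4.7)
The plan is to prove the weak-$L^1$ bound directly from the Besicovitch $D$-condition, observe the trivial $L^\infty$ bound, and then conclude the $L^p$ inequalities by Marcinkiewicz interpolation (\trm{M}).

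First I would set up the weak-$L^1$ argument along the same lines as the proof of \trm{T1-1}, but with Besicovitch selection replacing the use of \lem{L1-1}. Fix $f\in L^1(\mu)$ and $\lambda>0$, and consider the (possibly non-measurable) level set $E=\{x\in X:\,M_\mu f(x)>\lambda\}$. For each $x\in E$ pick a set $B(x)\in\ZB$ with $x\in B(x)$ and $\int_{B(x)}|f|\,d\mu>\lambda\,\mu(B(x))$. Apply the Besicovitch $D$-condition to the collection $\{B(x):x\in E\}$ to obtain a subcollection $\ZA'$ with the same union and with overlap bounded by $D$.

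The key inequality is then the chain
\begin{align*}
\mu^*(E)&\le \mu\!\left(\bigcup_{B\in\ZA'}B\right)\le \sum_{B\in\ZA'}\mu(B)\le \frac{1}{\lambda}\sum_{B\in\ZA'}\int_B|f|\,d\mu\\
&=\frac{1}{\lambda}\int_X |f|\sum_{B\in\ZA'}\ZI_B\,d\mu\le \frac{D}{\lambda}\int_X|f|\,d\mu,
\end{align*}
which yields \e{h16}. This step replaces the role of $\ZK$ in \trm{T1-1}: the bounded overlap from Besicovitch controls the total measure directly, without needing a hull structure or the B4)-condition. Since the set $\bigcup_{B\in\ZA'}B$ is measurable, there is no issue applying $\mu$ to it, and the outer-measure bookkeeping is confined to the initial $E$.

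For the strong bounds, the estimate $M_\mu f(x)\le\|f\|_\infty$ holds pointwise for every $x$, giving $\|M_\mu\|_{L^\infty\to L^\infty}\le 1$. Since $M_\mu$ is sublinear (in fact subadditive), I invoke \trm{M} with weak endpoint $p_0=1$ and strong endpoint $p_1=\infty$ to obtain, for $1<p<\infty$,
\begin{equation*}
\|M_\mu\|_{L^p(\mu)}\le c_p\bigl(\|M_\mu\|_{L^1\to L^{1,\infty}}\bigr)^{1/p}\bigl(\|M_\mu\|_{L^\infty}\bigr)^{1-1/p}\le c_p D^{1/p},
\end{equation*}
which is the second claimed bound. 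I do not anticipate any serious obstacle; the only subtle point is the handling of the non-measurable level set $E$, which the Besicovitch selection resolves automatically because its union is measurable and dominates $E$.
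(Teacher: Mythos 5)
Your proposal is correct and follows essentially the same route as the paper: the weak-$L^1$ bound via Besicovitch selection of a subcollection with overlap at most $D$, and then Marcinkiewicz interpolation (\trm{M}) with the trivial $L^\infty$ bound to get the $L^p$ estimate with constant $c_pD^{1/p}$. Your explicit handling of the non-measurable level set and the indicator-sum step merely spell out what the paper leaves implicit.
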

	\begin{proof}
		Define
		\begin{equation*}
		E=\{ x\in X:\, M_\mu f(x)>\lambda\}.
		\end{equation*}
		For any $x\in  E $ there exists a set $B(x)\subset \ZB$ such that
		\begin{equation}
		\frac{1}{ w(B(x))}\int_{B(x)}|f|d w>\lambda,\quad x\in B(x).
		\end{equation}
		According to the Besicovitch condition there is a subcollection $\zA\subset \{B(x):\,x\in E\}$ such that
		\begin{align*}
		&\bigcup_{A\in \zA}A= \bigcup_{x\in E}B(x),\\
		&\sum_{ A\in \zA}\ZI_A(x)\le D.
		\end{align*} 
		Thus we get
		\begin{equation*}
		\mu^*(E)\le \sum_{A\in \zA}\mu(A)\le \frac{1}{\lambda}\sum_{A\in \zA}\int_{A}|f|\le \frac{D}{\lambda}\int_X|f|d\mu.
		\end{equation*} 
		The second inequality immediately follows from \e {h16}, according to Marcinkiewicz interpolation theorem (\trm {M}).
	\end{proof}
	The following theorem gives a sharp weighted estimate in general measure spaces with a ball basis satisfying the Besicovitch condition. 
	
	\begin{theorem}\label{T3}
		Let $\ZB$ be a family of sets in a measure space $(X,\ZM,\mu)$ satisfying the Besicovitch $D$-condition. If $\ZS$  is a $\gamma$-sparse collection and the weight $ w$ satisfies the $A_p$ condition for $1<p<\infty$, then
		\begin{equation}\label{h91}
		\|\ZA_{\ZS}f\|_{L^	p( w)\to L^p( w)}\lesssim c_p\gamma^{-1}D^{\max\{1/(p-1),p-1\}}\cdot [ w]_{A_p}^{\max\{1,1/(p-1)\}}.
		\end{equation}
	\end{theorem}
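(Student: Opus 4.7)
The plan is to follow the architecture of the proof of \trm{T9}, but replace the $A_p$-dependent maximal bounds from \lem{L13} with the much sharper estimates that the Besicovitch condition affords. The essential observation is that the proof of \trm{T12} is purely combinatorial: the $D$-condition extracts a bounded-overlap subcover from the Vitali-type family $\{B(x)\}_{x\in E}$ without using any property of the underlying measure $\mu$. Consequently, the identical argument shows that for \emph{any} measure $\nu$ on $(X,\ZM)$,
\begin{equation*}
\|M_\nu\|_{L^1(\nu)\to L^{1,\infty}(\nu)}\le D,\qquad \|M_\nu\|_{L^p(\nu)\to L^p(\nu)}\le c_p\,D^{1/p},
\end{equation*}
the second inequality following from \trm{M}. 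Crucially, neither bound involves an $A_p$-characteristic, which is what will allow us to beat the exponent in \e{h88}.

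For the range $1<p\le 2$, I would invoke \lem{L10} directly, substituting the above universal maximal estimates for the weights $\nu=w$ and $\nu=\sigma$. This gives
\begin{equation*}
\|\ZA_{\ZS}\|_{L^p(w)\to L^p(w)}\le \gamma^{-1}[w]_{A_p}^{1/(p-1)}\cdot (c_p D^{1/p})^{1/(p-1)}\cdot c_p D^{1/p},
\end{equation*}
and since $\tfrac{1}{p(p-1)}+\tfrac{1}{p}=\tfrac{1}{p-1}$, the exponents of $D$ collapse to produce $c_p\gamma^{-1}D^{1/(p-1)}[w]_{A_p}^{1/(p-1)}$, which matches \e{h91} in this range.

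For $p>2$, I would pivot to duality via \lem{L11}: $\|\ZA_{\ZS}\|_{L^p(w)\to L^p(w)}=\|\ZA_{\ZS}\|_{L^q(\sigma)\to L^q(\sigma)}$ where $q=p/(p-1)\in(1,2)$ and $[\sigma]_{A_q}=[w]_{A_p}^{1/(p-1)}$ by \e{h90}. Applying the case just established to $(q,\sigma)$ yields a bound of $c_q\gamma^{-1}D^{1/(q-1)}[\sigma]_{A_q}^{1/(q-1)}$. Using $1/(q-1)=p-1$ and $[\sigma]_{A_q}^{p-1}=[w]_{A_p}$, this simplifies to $c_p\gamma^{-1}D^{p-1}[w]_{A_p}$, again matching \e{h91}.

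There is no real obstacle: both cases reduce to routine arithmetic verification that the exponents of $D$ and $[w]_{A_p}$ combine as stated. The only conceptual content is recognizing that the Besicovitch hypothesis decouples the $L^p$-bound for the weighted maximal operator from the $A_p$-characteristic of the weight, which is precisely what sharpens the exponent on $[w]_{A_p}$ from $\max\{(p+2)/(p(p-1)),(3p-2)/p\}$ down to the conjecturally optimal $\max\{1,1/(p-1)\}$.
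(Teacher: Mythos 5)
Your proposal is correct and follows essentially the same route as the paper: for $1<p\le 2$ the paper likewise combines \lem{L10} with the Besicovitch maximal bound of \trm{T12} (whose proof, as you note, is measure-independent and so applies to $M_w$ and $M_\sigma$), obtaining $D^{1/(p(p-1))}\cdot D^{1/p}=D^{1/(p-1)}$, and for $p>2$ it passes to the dual weight via \lem{L11} exactly as you do. Your write-up merely makes explicit the duality arithmetic that the paper dispatches with the phrase ``the same argument as in the proof of \trm{T9}.''
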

	\begin{proof}
		First suppose that $1<p\le 2$. Applying \lem {L10} and \trm {T12} we obtain
		\begin{align*}
		\|\ZA_{\ZS}\|_{L^p( w)\to L^p( w)}&\le c_p\gamma^{-1}[ w]_{A_p}^{1/(p-1)}\cdot D^{1/p(p-1)}\cdot  D^{1/p}\\
		&=c_p\gamma^{-1}[ w]_{A_p}^{1/(p-1)}\cdot D^{1/(p-1)}
		\end{align*}
		and so \e {h91}. In the case $2<p<\infty$ we use the same argument as in the proof of \trm {T9}.
	\end{proof}
	Applying \trm {T1} and \trm {T3} we immediately get the following
	\begin{theorem}\label{T2}
		Let a family of measurable sets $\ZB$ in a measure space $(X,\ZM,\mu)$ satisfy the Besicovitch $D$-condition and $ w$ be a $A_p$ weight with $1<p<\infty$. Then if an operator $T\in\BO_\ZB(X)$ satisfy weak-$L^1$ inequality, then
		\begin{equation*}
		\|T\|_{L^p( w)\to L^p( w)}\lesssim C(\ZL_1+\ZL_2+\|T\|_{L^1\to L^{1,\infty}}) \cdot[ w]_{A_p}^{\max\{1,1/(p-1)\}},
		\end{equation*}
		where $C$ is a constant depending on $p$ and the Besicovitch constant.
	\end{theorem}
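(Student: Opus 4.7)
The plan is to combine the pointwise sparse domination of \trm{T1} with the weighted $L^p$ bound for sparse operators in the Besicovitch setting given by \trm{T3}. Since $T\in\BO_\ZB$ satisfies the weak-$L^1$ inequality, \trm{T1} with $r=1$ applies and supplies, for each ball $B$ and each admissible $f$, a family $\ZS=\ZS_1\cup\ZS_2$ of two $\gamma$-sparse collections (with $\gamma$ admissible) such that
\begin{equation*}
|Tf(x)|\lesssim (\ZL_1+\ZL_2+\|T\|_{L^1\to L^{1,\infty}})\bigl(\ZA_{\ZS_1}f(x)+\ZA_{\ZS_2}f(x)\bigr)
\end{equation*}
a.e.\ on $B$. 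First I would split the sum, apply the triangle inequality in $L^p(w)$, and invoke \trm{T3} on each $\ZA_{\ZS_j}$. Since $\gamma$ is admissible, the factor $\gamma^{-1}$ from \trm{T3} is absorbed into the constant $C$ depending on $p$ and the Besicovitch constant $D$, giving the target bound for $\|Tf\cdot \ZI_B\|_{L^p(w)}$ uniformly in the choice of $B$.

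The remaining step is to promote this localized bound to a bound on all of $X$. For this I would appeal to \lem{L1}: there exists a sequence of balls $G_k$ such that every ball $A\in\ZB$ lies inside $G_k^{[1]}$ for all sufficiently large $k$, and in particular $X\subset\bigcup_k G_k^{[1]}$ up to a null set. Applying the uniform-in-$B$ estimate with $B=G_k^{[1]}$ and then Fatou's lemma to the nonnegative integrands $|Tf|^p\ZI_{G_k^{[1]}}w$ yields
\begin{equation*}
\|Tf\|_{L^p(w)}\le\liminf_k\|Tf\cdot \ZI_{G_k^{[1]}}\|_{L^p(w)},
\end{equation*}
and the right-hand side is already controlled by the desired quantity.

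Since \trm{T1} with $r=1$ strictly speaking requires $f\in L^1$ in the local sense, a minor technical layer I would include is a density or truncation argument — either approximating $f\in L^p(w)$ by bounded, compactly supported functions, or observing that $f$ is locally integrable by H\"older combined with the local integrability of the dual weight $w^{-1/(p-1)}$ guaranteed by $w\in A_p$. I do not expect any genuine obstacle here: all three ingredients (pointwise sparse domination, weighted sparse bound, basis exhaustion) are already in hand, and the proof is essentially bookkeeping with careful tracking of the exponent $\max\{1,1/(p-1)\}$ coming from \trm{T3} and the role of the Besicovitch constant $D$.
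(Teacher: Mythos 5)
Your proposal is correct and follows exactly the route the paper takes: the paper derives Theorem \ref{T2} simply by combining the pointwise sparse domination of Theorem \ref{T1} (with $r=1$) with the weighted sparse bound of Theorem \ref{T3}. The extra bookkeeping you supply (splitting the two sparse families, exhausting $X$ by balls via Lemma \ref{L1} and Fatou, and the local integrability of $f$) is exactly the detail the paper leaves implicit.
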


	\section {Bounded oscillation operators on spaces of homogeneous type}\label{S9}
	
	\begin{definition}
		A quasimetric on a set $X$ is a function $\rho:X\times X\to [0,\infty)$ satisfying the conditions
		
		1) $\rho(x,y)\ge 0$ for every $(x,y)\in X$ and $\rho(x,y)= 0$ if and only if $x=y$,
		
		2) $\rho(x,y)=\rho(y,x)$ for every $x,y\in X$,
		
		3) $\rho(x,y)\le \ZD(\rho(x,z)+\rho(z,y))$ for every $x,y,z\in X$, where $\ZD>1$ is a fixed constant.
	\end{definition}
	Define the ball of a center $x$ and a radius $r$ by 
	\begin{equation*}
	B(x,r)=\{y\in X:\, \rho(x,y)<r\}, \quad x\in X,\quad 0<r< \infty
	\end{equation*}
	and denote by $\ZU(\rho)$ the family of all such balls, calling them $\rho$-balls. Quasimetric defines a topology, for which the $\rho$-balls form a base. In general, the balls need not to be open sets in this topology. For $B\in\ZU(\rho)$ denote by $c(B)$ and $r(B)$ respectively the center and the radius of $B$. For any $t>0$ we set $tB=B(c(B), tr(B))$.
	We define also an enlarged family of balls $\ZU'(\rho)$ as follows: if $\mu(X)=\infty$, then $\ZU'(\rho)$ coincides with $\ZU(\rho)$, in the case $\mu(X)<\infty$ we include in $\ZU'(\rho)$ additionally the set $X$. 
	\begin{definition} Let $\rho$ be a quasimetric on $X$ and $\mu$ be a positive measure defined on a $\sigma$-algebra $\ZM$ of subsets of $X$, containing the $\rho$-open sets and the $\rho$-balls. The collection $(X,\rho,\ZM,\mu)$ is said to be a space of homogeneous type if
		\begin{equation}\label{h101}
		\mu(2B)\le \ZH\cdot \mu(B)
		\end{equation}
		for any ball $B\in\ZU(\rho)$.
	\end{definition}
	Note that \e {h101} implies a more general inequality. Namely,
	\begin{equation}\label{z10}
	\mu(a\cdot B)\le \ZH(a)\cdot \mu(B),\quad a>0, 
	\end{equation}
	where $\ZH(a)$ is a constant depending on $a$ and $\ZH$. 
	From property 3) of quasimetric it easily follows that for any $B\in\ZU(\rho)$ it holds the inequality
	\begin{equation*}
	\diam B=\sup_{x,y\in B}\rho(x,y)\le 2\ZD\cdot r(B).
	\end{equation*}
	In this section the notation $a\lesssim b$ will stand for $a\le c\cdot b$, where $c>0$ is a constant depending on the constants $\ZH$ and $\ZD$ of the space homogeneous type .
	\begin{theorem}\label{T18}
		Let $(X,\rho,\ZM,\mu)$ be a space of homogeneous type such that $\ZU(\rho)$ satisfies the density condition. Then the enlarged family of balls $\ZU'(\rho)$ forms a ball-basis for the measure space $(X,\ZM,\mu)$ and satisfies the doubling condition. Besides, the hull ball of any $B=B(x_0,r)\in \ZU(\rho)$ has the form $B^*=B(x_0,R)$, $2r\le R\le \infty$.
	\end{theorem}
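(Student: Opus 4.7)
The plan is to verify axioms B1)--B4), the doubling condition, and the explicit form of the hull in turn. B1) is part of the definition of a space of homogeneous type (positivity of $\mu$ on $\rho$-balls is standard and finiteness is inherited through \e{h101}), while B2) is immediate since $B(x, 2\rho(x,y)+1)$ contains both $x$ and $y$ by the quasitriangle inequality.

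The heart of the argument is B4). Fix $B = B(x_0, r) \in \ZU(\rho)$ and set $T(s) = \ZD s + \ZD^2(r+s)$. Whenever $A = B(y,s)$ meets $B$, choosing a common point gives $\rho(x_0, y) \le \ZD(r+s)$, and the quasitriangle inequality then forces $\rho(x_0, w) < T(s)$ for every $w \in A$, i.e.\ $A \subset B(x_0, T(s))$. I would set
$$ R = \sup\{T(s) : A = B(y,s) \in \ZU(\rho),\ A \cap B \neq \varnothing,\ \mu(A) \le 2\mu(B)\} $$
and take $B^* = B(x_0, R)$ when $R < \infty$, or $B^* = X$ when $R = \infty$. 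The lower bound $R \ge 2r$ follows by plugging $A = B$ into the supremum, since $T(r) = \ZD r + 2\ZD^2 r \ge 2r$.

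For the measure bound $\mu(B^*) \le \ZK \mu(B)$, I would split into the regimes $s \le r$ and $s > r$. For small $s$, $T(s) \le T(r) = (\ZD + 2\ZD^2)r$, so iterating \e{h101} gives $\mu(B(x_0, T(s))) \lesssim \mu(B)$. For large $s$, a symmetric application of the quasitriangle inequality, using again $\rho(x_0,y) \le \ZD(r+s)$, shows that $B(x_0, T(s)) \subset B(y, Cs)$ for an absolute constant $C = C(\ZD)$; the doubling extension \e{z10} then yields $\mu(B(x_0, T(s))) \le \ZH(C) \mu(A) \le 2\ZH(C)\mu(B)$. When $R = \infty$, the same large-$s$ estimate evaluated along a sequence $s_n \to \infty$ forces $\mu(X) \le 2\ZH(C)\mu(B) < \infty$, so $X \in \ZU'(\rho)$ and the choice $B^* = X$ is legitimate. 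Axiom B3) is then immediate from \lem{L12} together with the assumed density property. For the doubling condition, given $A = B(x_0, r)$ with $A^{[1]} \neq X$, the concentric ball $B(x_0, 2r)$ satisfies $\mu(B(x_0,2r)) \le \ZH \mu(A)$ by \e{h101}; if the set-theoretic containment is not strict one enlarges the radius further, the hypothesis $A^{[1]} \neq X$ preventing an indefinite stall of the radii.

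The main obstacle is the uniform measure bound in B4) for the large-$s$ regime: one cannot a priori bound $s$ by a constant multiple of $r$, so the argument must use doubling in reverse, containing the $x_0$-centered ball of radius $T(s)$ inside a $y$-centered ball whose radius is only a constant times $s$, and then transferring the hypothesis $\mu(B(y,s)) \le 2\mu(B)$ via \e{z10}. Once this is handled, the remaining axioms and the explicit form of $B^*$ fall out of the construction.
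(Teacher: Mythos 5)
Your proposal is correct and follows essentially the same route as the paper: you build the hull as a concentric ball $B(x_0,R)$ with $R$ governed by the supremum of radii of admissible balls (the paper's \lem{L23} and \lem{L9}), bound its measure by comparing to a near-extremal admissible ball via the doubling property \e{z10}, handle the unbounded-radius case by showing $\mu(X)\lesssim\mu(B)$ and taking $B^*=X$ (the paper's \lem{L15}), obtain B3) from \lem{L12}, and verify the doubling condition by enlarging the radius until the ball strictly grows, exactly as the paper does with its auxiliary radius $R'$. The only cosmetic difference is that you parametrize the hull radius by $T(s)$ and split the measure estimate into the regimes $s\le r$ and $s>r$, where the paper instead fixes $\gamma=\sup r(A)$ and selects a single ball $G$ with $r(G)>\gamma/2$; these are the same argument.
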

	The proof of the theorem is based on the following lemmas.
	\begin{lemma}\label{L23}
		If $(X,\rho,\ZM,\mu)$ is a space of homogeneous type, then for any point $x_0\in X$ and ball $G\in\ZU(\rho)$ we have
		\begin{equation}\label{z48}
		G\subset B(x_0,2\ZD^2(\dist(x_0,G)+r(G))).
		\end{equation}
	\end{lemma}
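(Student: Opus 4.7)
The plan is to apply the quasimetric inequality twice: once to control the diameter of $G$, and once to pass from a nearly-optimal $z \in G$ to an arbitrary $y \in G$ through $x_0$.

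First I would fix $y \in G = B(c, r)$, where $c = c(G)$ and $r = r(G)$. For any $z \in G$, both $\rho(c,y) < r$ and $\rho(c,z) < r$ by definition of $G$, so the quasimetric triangle inequality yields
\begin{equation*}
\rho(z,y) \le \ZD(\rho(z,c) + \rho(c,y)) < 2\ZD r.
\end{equation*}
Then, applying the quasimetric inequality again with $z$ as the midpoint,
\begin{equation*}
\rho(x_0, y) \le \ZD(\rho(x_0, z) + \rho(z, y)) < \ZD\rho(x_0,z) + 2\ZD^2 r.
\end{equation*}

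Next, given $\varepsilon > 0$, I would choose $z \in G$ with $\rho(x_0, z) < \dist(x_0, G) + \varepsilon$, which is possible by the definition of the distance as an infimum. Substituting gives
\begin{equation*}
\rho(x_0, y) < \ZD(\dist(x_0, G) + \varepsilon) + 2\ZD^2 r.
\end{equation*}
Letting $\varepsilon \to 0$, we obtain $\rho(x_0,y) \le \ZD \dist(x_0,G) + 2\ZD^2 r$. Since $\ZD \ge 1$, we have $\ZD \le 2\ZD^2$, hence
\begin{equation*}
\rho(x_0,y) \le 2\ZD^2(\dist(x_0,G) + r),
\end{equation*}
which places $y$ in the ball $B(x_0, 2\ZD^2(\dist(x_0,G) + r(G)))$ as claimed.

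There is no serious obstacle here; this is a direct double application of the quasimetric triangle inequality. The only subtlety is that $\dist(x_0,G)$ is an infimum that may not be attained, so one should argue via an $\varepsilon$-approximation rather than try to pick an optimal $z$ outright. The reason the constant comes out as $2\ZD^2$ (rather than the sharper $\ZD \dist(x_0,G) + 2\ZD^2 r$) is simply the convenience of expressing the bound in the symmetric form $\dist(x_0,G) + r(G)$.
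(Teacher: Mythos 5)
Your proof is correct and follows essentially the same route as the paper: bound $\rho(z,y)<2\ZD r(G)$ for $z,y\in G$ via the center, pick a point of $G$ nearly realizing $\dist(x_0,G)$, and apply the quasi-triangle inequality once more. The only cosmetic difference is that the paper picks $y\in G$ with $\rho(x_0,y)<2\dist(x_0,G)$ and absorbs the factor $2\ZD$ into $2\ZD^2$ using $\ZD>1$, whereas you use an $\varepsilon$-approximation (which in fact handles the degenerate case $\dist(x_0,G)=0$ more cleanly).
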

	\begin{proof}
		Fix a point $y\in G$ with $\rho(x_0,y)<2\dist(x_0,G)$. For arbitrary $x\in G$ we have
		\begin{align*}
		\rho(x,x_0)&\le \ZD(\rho(x,y)+\rho(y,x_0))\\
		&\le \ZD(2\ZD r(G)+2\dist(x_0,G))\le 2\ZD^2(\dist(x_0,G)+r(G))
		\end{align*}
		that means $x$ belongs to the right side of \e {z48}. 
	\end{proof}
	\begin{lemma}\label{L15}
		If $(X,\rho,\ZM,\mu)$ is a space of homogeneous type and the balls $B\in\ZU(\rho)$, $G_k\in\ZU(\rho)$, $k=1,2,\ldots$, satisfy the relations
		\begin{align}
		&B\cap G_k\neq \varnothing,\label{z50}\\
		& r(G_k)\to\infty \text { as } k\to\infty,\nonumber
		\end{align}
		then 
		\begin{equation*}
		\mu(X)\lesssim  \limsup_{k\to\infty }\mu(G_k).
		\end{equation*}
	\end{lemma}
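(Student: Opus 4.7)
The plan is to reduce the problem to the boundedness of $X$ by exhausting it with balls centred at a fixed point of $B$, and showing that each such ball is eventually contained in a fixed dilate of $G_k$, so that the doubling inequality \e{z10} gives the required control.

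First I would fix the reference point $x_0=c(B)$. Using that $B\cap G_k\neq\varnothing$, pick $y_k\in B\cap G_k$; then $\rho(y_k,x_0)<r(B)$ and $\rho(y_k,c(G_k))<r(G_k)$, so by the quasitriangle inequality
\begin{equation*}
\rho(x_0,c(G_k))\le \ZD\bigl(r(B)+r(G_k)\bigr).
\end{equation*}

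Next, for each fixed $R>0$ let $A_R=B(x_0,R)$. For any $x\in A_R$, another application of the quasitriangle inequality yields
\begin{equation*}
\rho(x,c(G_k))\le \ZD R+\ZD^2 r(B)+\ZD^2 r(G_k).
\end{equation*}
Since $r(G_k)\to\infty$, for all $k$ sufficiently large (with a threshold depending on $R$ and $B$, but \emph{not} on the universal dilation constant), the right-hand side is strictly smaller than $2\ZD^2\,r(G_k)$. Hence $A_R\subset 2\ZD^2\,G_k$ for such $k$, and the doubling property \e{z10} gives $\mu(A_R)\le \ZH(2\ZD^2)\,\mu(G_k)\lesssim \mu(G_k)$. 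Taking $\limsup$ in $k$ produces
\begin{equation*}
\mu(A_R)\lesssim \limsup_{k\to\infty}\mu(G_k),
\end{equation*}
with implicit constant independent of $R$.

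Finally, since every point of $X$ lies within finite quasimetric distance of $x_0$, we have $X=\bigcup_{R>0}A_R$, so by monotone convergence $\mu(X)=\lim_{R\to\infty}\mu(A_R)\lesssim \limsup_k\mu(G_k)$. The only delicate point, and the reason the argument works, is that the dilation factor $2\ZD^2$ is \emph{independent} of $R$: the $R$-dependent error $\ZD R+\ZD^2 r(B)$ is absorbed into $\ZD^2 r(G_k)$ thanks to $r(G_k)\to\infty$, so a single application of the doubling inequality suffices for every $R$.
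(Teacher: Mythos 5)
Your proof is correct and follows essentially the same route as the paper's: both exhaust $X$ by balls centred at $c(B)$, show each such ball is contained in a dilate of $G_k$ by a factor depending only on $\ZD$ (you via a direct quasitriangle computation, the paper via its Lemma \ref{L23}), and then apply the doubling inequality \e{z10} once with a constant independent of the exhaustion parameter. The only cosmetic difference is that the paper takes the exhausting radius to be $r(G_k)$ itself rather than a fixed $R$ with $k$ large.
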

	\begin{proof}
		Without loss of generality we can suppose that
		\begin{equation*}
		r(G_k)>r(B).
		\end{equation*}
		From \e{z50} it follows that $\dist(c(G_k), B)<r(G_k)$. Thus, applying \lem {L23}, we get
		\begin{align*}
		B(c(B), r(G_k))&\subset B(c(G_k),2\ZD^2 ( \dist(c(G_k), B)+r(G_k)))\\
		&\subset B(c(G_k),4\ZD^2 r(G_k)),
		\end{align*}
		and therefore by \e {z10} we obtain
		\begin{equation*}
		\mu(B(c(B),r(G_k)))\le  \ZH(4\ZD^2)\cdot \mu(G_k).
		\end{equation*}
		On the other hand, since $r(G_k)\to\infty$, we have $X=\cup_{k}B(c(B),r(G_k))$. Therefore we get
		\begin{equation*}
		\mu(X)=\lim_{k\to\infty }\mu(B(c(B),r(G_k))\lesssim\limsup_{k\to\infty }\mu(G_k).
		\end{equation*}
	\end{proof}

	\begin{lemma}\label{L9}
		Let $(X,\rho,\ZM,\mu)$ be a space of homogeneous type. Then for any $B=B(x_0,r)\in \ZU(\rho)$ there exists a ball $B^*=B(x_0,R)$  with $2r\le R\le \infty$ such that 
		\begin{align}
		&\qquad\qquad \mu(B^*)\lesssim \mu(B),\label{z14}\\
		&\bigcup_{A\in\ZU(\rho):\, \mu(A)\le 2\mu(B),\, A\cap B\neq\varnothing}A\subset B^*.\label{z13}
		\end{align}
	\end{lemma}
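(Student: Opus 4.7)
The plan is to define $R$ explicitly in terms of a supremum of admissible ball radii and then verify both the containment \e{z13} and the measure bound \e{z14} by combining the quasi-triangle inequality with iterated doubling. Set
\begin{equation*}
s_0 = \sup\{r(A) : A \in \ZU(\rho),\, A \cap B \neq \varnothing,\, \mu(A) \le 2\mu(B)\},
\end{equation*}
which satisfies $s_0 \ge r$ since $A = B$ itself belongs to the collection.

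If $s_0 = \infty$, I would choose a sequence of admissible balls $A_k$ with $r(A_k) \to \infty$ and apply \lem{L15} to conclude $\mu(X) \lesssim \limsup_k \mu(A_k) \le 2\mu(B) < \infty$. Hence $X \in \ZU'(\rho)$, and setting $B^* = X$ (formally $R = \infty$) makes both \e{z13} and \e{z14} immediate.

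If $s_0 < \infty$, I would take $R = 2\ZD^2(s_0 + r) \ge 4\ZD^2 r \ge 2r$. For any admissible $A$ the relation $A \cap B \neq \varnothing$ gives $\dist(x_0, A) < r$, so \lem{L23} yields
\begin{equation*}
A \subset B(x_0, 2\ZD^2(\dist(x_0, A) + r(A))) \subset B(x_0, 2\ZD^2(r + s_0)) = B^*,
\end{equation*}
proving \e{z13}. For \e{z14}, pick an admissible witness $A_0$ with $r(A_0) > s_0/2$ and center $y_0$. Since $A_0 \cap B \neq \varnothing$, the quasi-triangle inequality gives $\rho(y_0, x_0) \le \ZD(r(A_0) + r) \le 2\ZD s_0$, and one more application shows that for every $x \in B^*$,
\begin{equation*}
\rho(x, y_0) \le \ZD(R + \rho(x_0, y_0)) < 2\ZD^2(2\ZD + 1) s_0 < 4\ZD^2(2\ZD + 1)\, r(A_0),
\end{equation*}
using $s_0 + r \le 2 s_0 < 4 r(A_0)$. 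Hence $B^*$ is contained in a fixed concentric dilate $\lambda A_0$ with $\lambda = 4\ZD^2(2\ZD+1)$, and the iterated doubling bound \e{z10} gives $\mu(B^*) \le \ZH(\lambda)\,\mu(A_0) \le 2\ZH(\lambda)\,\mu(B)$.

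The essential content is the dichotomy: either $s_0 = \infty$, in which case \lem{L15} automatically forces $\mu(X)$ to be finite and supplies $B^* = X$, or $s_0$ is finite and a near-extremal witness $A_0$ simultaneously controls both the radius of the hull (via the triangle inequality applied from $x_0$) and its measure (via the reverse triangle inequality applied from $y_0$, together with doubling). The only real obstacle is bookkeeping the quasimetric constant $\ZD$ through the chained triangle inequalities, which is mechanical rather than conceptual.
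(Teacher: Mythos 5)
Your proposal is correct and follows essentially the same route as the paper: the same dichotomy on the supremum of admissible radii (the paper calls it $\gamma$, you call it $s_0$), the same use of \lem{L15} in the infinite case, and in the finite case the same two-step argument of first containing all admissible balls in $B(x_0,R)$ via \lem{L23} and then containing $B^*$ in a fixed dilate of a near-extremal ball to invoke doubling. The only cosmetic difference is that you carry out the second containment by a direct quasi-triangle computation from the witness's center rather than by a second appeal to \lem{L23}, which changes nothing of substance.
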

	\begin{proof}
		For a given $B\in \ZU(\rho)$ let $\zA$ be the family of balls $A\in\ZU(\rho)$ satisfying
		\begin{equation*}
		A\cap B\neq\varnothing,\quad \mu(A)\le 2\mu(B).
		\end{equation*}
		First suppose that
		\begin{equation*}
		\gamma=\sup_{A\in\zA}r(A)<\infty.
		\end{equation*}
		Applying \lem {L23}, for an arbitrary $A\in\zA$ we get 
		\begin{align*}
		A=B(c(A),r(A))&\subset B(c(B), 2\ZD^2(r(B)+r(A)))\\
		&\subset B(c(B), 4\ZD^2\gamma ).
		\end{align*}
		It is clear that $B^*=B(c(B), 4\ZD^2\gamma )$ satisfies \e {z13}. Take a ball $G\in \zA$ such that $r(G)>\gamma/2$. Again applying \lem {L23}, we get
		\begin{align*}
		B^*&=B(c(B), 4\ZD^2\gamma )\subset B(c(G),2\ZD(r(G)+4\ZD^2\gamma))\\
		&\subset B(c(G),10\ZD^3\gamma))\subset B(c(G),20\ZD^3r(G))\\
		&= 20\ZD^3\cdot G.
		\end{align*}
		Thus we conclude
		\begin{equation*}
		\mu(B^*)\le \mu(20\ZD^3\cdot G)\le \ZH(20\ZD^3)\mu(G)\lesssim \mu(B)
		\end{equation*}	
		that is just \e {z14}. Now consider the case $\gamma=\infty$. There is a sequence of balls $G_k\in\zA$ such that $r(G_k)\to\infty$. Applying \lem {L15}, we get
		\begin{equation*}
		\mu(X)\lesssim \limsup_{k\to\infty }\mu(G_k)\le 2\mu(B).
		\end{equation*}
		Obviously $B^*=B(x_0,\infty)=X$ satisfies \e {z14} and \e {z13}.
	\end{proof}
	
	\begin{proof}[Proof of \trm {T18}]
		We need to check conditions B1)-B4) of the definition of ball-basis. The conditions B1) and B2) immediately follows from the axioms of quasi-metric space and B4) follows from \lem{L9} and moreover for $B=B(x_0)\in \ZU(\rho)$ the hull ball $B^*$ has the form $B(x_0,R)$. The B3)-condition follows from the density property, since by \lem {L12}  those are equivalent. In order to prove the doubling condition, take a ball $A=B(x_0,r)$ such that $A^*=B(x,R)\neq X$. Denote	
		\begin{equation*}
		R'=\sup_{r'\ge R:\,B(x_0')=B(x_0,R)}r'.
		\end{equation*}
		Since $B(x_0,R)\neq X$, one can check that $R'<\infty$ and  
		\begin{equation*}
		A^*=B(x_0,R)=B(x_0,R')\subsetneq B(x_0,2R').
		\end{equation*} 
		Thus defining $B=B(x_0,2R')$, we get $A\subsetneq B$ and 
		\begin{equation*}
		\mu(B)=\mu(B(x_0,2R'))\lesssim \mu(B(x_0,R'))=\mu(A^*)\lesssim \mu(A),
		\end{equation*} 
		that proves the doubling condition. 
	\end{proof}
	
	\begin{theorem}\label{T17}
		Let $(X,\rho,\ZM,\mu)$ be a space homogeneous type satisfying the density condition. If $\ZS\subset \ZU(\rho)$  is a sparse collection of balls and the weight $ w$ satisfies the $A_p$-condition for $1<p<\infty$ (with respect to the family $\ZU(\rho)$), then the corresponding sparse operator satisfies the bound
		\begin{equation}\label{z2}
		\|\ZA_{\ZS}f\|_{L^p( w)\to L^p( w)}\lesssim c_p [w]_{A_p}^{\max\{1,1/(p-1)\}}.
		\end{equation}
	\end{theorem}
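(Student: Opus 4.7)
The plan is to combine Lemmas \ref{L10} and \ref{L11} with a universal (weight-independent) $L^p$-bound for the weighted Hardy--Littlewood maximal operator in the homogeneous setting. This is what upgrades the general-case Theorem \ref{T9}: there, Lemma \ref{L13} had to be used, which loses an additional power of $[w]_{A_p}$; here we avoid that loss by exploiting the geometric structure of $\rho$-balls.

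The first step is to establish that for any positive weight $\nu$ on $X$, the weighted maximal operator $M_\nu f(x)=\sup_{B\ni x,\, B\in\ZU(\rho)}\frac{1}{\nu(B)}\int_B|f|\,d\nu$ is bounded on $L^p(\nu)$ by a constant $c_p$ depending only on $p$, $\ZH$ and $\ZD$, \emph{independent of $\nu$}. I would do this by a Vitali-style covering argument in the ball-basis $\ZU'(\rho)$ furnished by Theorem \ref{T18}: cover $\{M_\nu f>\lambda\}$ by $\rho$-balls on which the $\nu$-average of $|f|$ exceeds $\lambda$, and extract via Lemma \ref{L1-1} a pairwise disjoint subfamily $\{B_k\}$ whose hulls cover the set. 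Since Theorem \ref{T18} identifies each hull $B_k^{[1]}$ with a concentric $\rho$-ball of $\mu$-measure at most $\ZK\mu(B_k)$ (and the doubling \eqref{h101}--\eqref{z10} controls the radius ratio in terms of $\ZH,\ZD$), a Besicovitch-type argument in $(X,\rho)$ lets us conclude $\nu(\{M_\nu f>\lambda\})\lesssim \lambda^{-1}\|f\|_{L^1(\nu)}$ with a constant free of $\nu$; Marcinkiewicz interpolation (Theorem \ref{M}) against the trivial $L^\infty$ bound then yields $\|M_\nu\|_{L^p(\nu)\to L^p(\nu)}\le c_p$.

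With this in hand, for $1<p\le 2$ Lemma \ref{L10} applied to the pair $(w,\sigma)$ immediately gives
\[
\|\ZA_\ZS\|_{L^p(w)\to L^p(w)}\le\gamma^{-1}c_p[w]_{A_p}^{1/(p-1)},
\]
which is the claimed sharp exponent $\max\{1,1/(p-1)\}=1/(p-1)$ on this range. For $p>2$ the duality Lemma \ref{L11} reduces the estimate to $\|\ZA_\ZS\|_{L^q(\sigma)\to L^q(\sigma)}$ with $q=p/(p-1)\in(1,2)$; applying the already-proven case with $\sigma$ in the role of the weight, and invoking \eqref{h90} together with the identity $(p-1)(q-1)=1$, we obtain
\[
\|\ZA_\ZS\|_{L^q(\sigma)\to L^q(\sigma)}\lesssim c_q[\sigma]_{A_q}^{1/(q-1)}=c_p[w]_{A_p},
\]
matching $\max\{1,1/(p-1)\}=1$ for $p\ge 2$.

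The main obstacle is the universal weak-$(1,1)$ bound in step one: a naive Vitali argument using the hull-relation $\nu(B^{[1]})\le C_\nu\nu(B)$ would cost a doubling constant of $\nu$, which for an $A_p$ weight scales like $[w]_{A_p}$ and would immediately destroy the sharp exponent. The crux is therefore to phrase the covering geometrically in $(X,\rho)$ — leveraging the concentric-hull structure provided by Theorem \ref{T18}, something that is unavailable in the abstract ball-basis setting of Section 5 — so that the constants entering the estimate are purely geometric.
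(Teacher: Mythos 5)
Your reduction to \lem{L10} and \lem{L11} is the right skeleton, and the duality step for $p>2$ is fine; but the whole proposal hinges on your first step, the claim that $\|M_\nu\|_{L^p(\nu)\to L^p(\nu)}\le c_p$ with $c_p$ independent of the measure $\nu$, and that step has a genuine gap. The covering argument you sketch cannot deliver it. If you extract a disjoint subfamily $\{B_k\}$ via \lem{L1-1} and cover the level set by the hulls $B_k^{[1]}$, you must then estimate $\nu\bigl(\bigcup_k B_k^{[1]}\bigr)\le\sum_k\nu(B_k^{[1]})$, and the passage from $\nu(B_k^{[1]})$ back to $\nu(B_k)$ requires doubling of $\nu$, not of $\mu$: the concentric-hull structure from \trm{T18} and the constants $\ZH,\ZD$ only control the ratio of $\mu$-measures (and radii), and for $\nu=w\in A_p$ the ratio $w(B^{[1]})/w(B)$ is exactly what \lem{L6} bounds by a power of $[w]_{A_p}$ --- which is the loss you are trying to avoid and which is built into \lem{L13}. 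The alternative you invoke, a ``Besicovitch-type argument in $(X,\rho)$,'' is not available either: the Besicovitch covering property is strictly stronger than the doubling hypothesis \e{h101} and fails in general spaces of homogeneous type (e.g.\ for the natural balls on the Heisenberg group), and with it fails the universal weak-$(1,1)$ bound for $M_\nu$ over $\rho$-balls. So the claimed weight-free maximal bound is not merely unproved by your sketch; it is false at the level of generality of \trm{T17}.

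The paper closes exactly this gap by a different device: the Hyt\"onen--Kairema decomposition (\trm{T15}) produces finitely many nested (martingale) systems $\ZB_1,\dots,\ZB_l$ with $\ZU(\rho)\sim\bigcup_j\ZB_j$; each sparse ball $B\in\ZS$ is replaced by a dyadic cube $Q(B)\supset B$ with $\mu(Q(B))\lesssim\mu(B)$, so that $\ZA_\ZS f\lesssim\sum_{k=1}^l\ZA_k f$ with each $\ZA_k$ a sparse operator over a martingale system. For nested families the universal maximal bound you want does hold trivially (this is \trm{T12} with $D=1$: disjointness of the selected maximal cubes replaces Besicovitch), and \trm{T3} then gives the sharp exponent for each $\ZA_k$. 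If you want to salvage your route, you would have to either restrict to spaces where the Besicovitch covering property is known (e.g.\ $\ZR^n$ with Euclidean balls), or route the maximal-function input through the dyadic systems as the paper does.
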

	The proof of this theorem is based on the Hyt\"{o}nen-Kairema \cite{HyKa} dyadic decomposition theorem, which reduces \trm {T17} to its martingale version (the case of $D=1$ in \trm {T3}).
	\begin{definition}
		Let $(X,\ZM,\mu)$ be a measure space. For two families of measurable sets $\ZB$ and $\ZB'$ we write $\ZB\prec  \ZB' $ if for any $B\in \ZB$ there exists $B'\in \ZB'$ such that
		\begin{equation*}
		B\subset B',\quad \mu(B')\le \gamma\mu(B),
		\end{equation*}
		where $\gamma>0$ is a constant. The minimum value of such constants $\gamma$ will be denoted by $\gamma(\ZB\prec  \ZB')$. If the relations $\ZB\prec\ZB' $ and $\ZB'\prec  \ZB $ hold simultaneously, then we write $\ZB\sim  \ZB'$ and denote
		\begin{equation*}
		\gamma(\ZB\sim  \ZB' )=\max\{\gamma(\ZB\prec  \ZB'),\gamma(\ZB'\prec  \ZB)\}.
		\end{equation*}  
	\end{definition}
	\begin{remark}
		One can verify that if for two families of measurable sets in $(X,\ZM,\mu)$ we have $\ZB\sim \ZB'$, then the $A_p$ characteristics with respect these families are equivalent. That is 
		\begin{equation*}
		0<c_1<\frac{\sup_{B\in \ZB}\left(\frac{1}{|B|}\int_Bw\right)\left(\frac{1}{|B|}\int_Bw^{-1/(p-1)}\right)^{p-1}}{\sup_{B\in \ZB'}\left(\frac{1}{|B|}\int_Bw\right)\left(\frac{1}{|B|}\int_Bw^{-1/(p-1)}\right)^{p-1}}<c_2, 
		\end{equation*}
		for some constants $c_1$ and $c_2$ depending on $\gamma(\ZB\sim  \ZB' )$.
	\end{remark}
	\begin{theorem}[Hyt\"{o}nen-Kairema \cite{HyKa}]\label{T15}
		If $(X,\rho,\ZM,\mu)$ is a space homogeneous type, then there exist martingale systems $\ZB_k\subset \ZM$, $k=1,2,\ldots,l$, such that 
		\begin{equation*}
		\ZU(\rho)\sim \ZB=\bigcup_{j=1}^l\ZB_j.
		\end{equation*}
		where $l$ and $\gamma(\ZU(\rho)\sim \ZB )$ are constants depending on $\ZH$ and $\ZD$.
	\end{theorem}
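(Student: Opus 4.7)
My plan is to realize each $\ZB_k$ as a Christ-type dyadic system on $(X,\rho,\ZM,\mu)$, and then take finitely many ``shifted'' such systems so that every metric ball is sandwiched between a cube and a slightly larger cube of comparable measure. The two halves of the equivalence $\ZU(\rho)\sim\ZB$ have very different levels of difficulty. The direction $\ZB\prec\ZU(\rho)$ will be immediate once each Christ cube $Q$ is shown to satisfy $Q\subset B(c,C\cdot r(Q))$ for a constant $C$ depending on $\ZD$, because then doubling \eqref{z10} forces $\mu(B(c,C\cdot r(Q)))\lesssim \mu(Q)$. The nontrivial direction is $\ZU(\rho)\prec\ZB$.

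For one Christ system I would fix $\delta\in(0,1)$ small enough (depending only on $\ZD$) and for each $k\in\ZZ$ choose a maximal $\delta^k$-separated set $\{x^k_j\}_j\subset X$ in a nested way, so that $\{x^{k+1}_i\}_i\supset\{x^k_j\}_j$. Assigning each $x^{k+1}_i$ to the nearest $x^k_j$ (breaking ties consistently) induces a tree on the indices; pulling Voronoi-type cells down this tree yields a measurable partition $\{Q^k_j\}_j$ of $X$ at each level $k$, satisfying
\[
B(x^k_j,c_1\delta^k)\subset Q^k_j\subset B(x^k_j,c_2\delta^k).
\]
Because a partition is trivially a martingale system (any subcollection is replaced by its disjoint members, which cover the same set with multiplicity one), the family of all such $Q^k_j$ across $k$ is one of the $\ZB_k$'s required by the statement.

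However, a single system does not give $\ZU(\rho)\prec\ZB_1$: a ball $B(x,r)$ with $r\asymp\delta^k$ whose center lies near the common boundary of neighboring cubes of scale $k$ is only contained in a cube of scale $k-m$ with $m$ possibly large. The remedy, following Hyt\"{o}nen--Kairema, is to build $l$ shifted systems $\ZB_1,\ldots,\ZB_l$ by choosing $l$ different initial nets, and to show by a pigeonhole argument that one of them always places $x$ deep inside a cube of scale $\asymp r$. The main obstacle is exactly this step: in a quasimetric space one has no group action to produce canonical shifts, so one must argue abstractly. I would do so by taking the base points of the net at some fiducial scale from a suitable finite family of representatives whose cardinality is bounded by the metric entropy of a single ball at a comparable scale, which by doubling \eqref{h101} depends only on $\ZH$ and $\ZD$. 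Once this geometric covering is set up, it follows that for every $B=B(x,r)$ one of the $\ZB_k$ contains a cube $Q$ with $B\subset Q$ and $r(Q)\lesssim r$, whence $\mu(Q)\lesssim\mu(B)$ by doubling, completing $\ZU(\rho)\prec\ZB$ and giving constants $l,\gamma(\ZU(\rho)\sim\ZB)$ that depend only on $\ZH$ and $\ZD$.
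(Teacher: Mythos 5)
First, a point of comparison: the paper does not actually prove Theorem \ref{T15} --- it is quoted from Hyt\"onen--Kairema \cite{HyKa} and used as a black box in the proof of Theorem \ref{T17}. So your proposal is in effect an attempt to reconstruct the cited proof, and your overall strategy (Christ-type cubes built from nested maximal $\delta^k$-separated nets, plus finitely many ``adjacent'' systems and a pigeonhole argument) is indeed the strategy of \cite{HyKa}.

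As a proof, however, your sketch has a genuine gap exactly at the step you yourself flag as the main obstacle, and the remedy you propose does not work as stated. What is needed is: for every ball $B(x,r)$, with $r$ ranging over \emph{all} scales, some system $\ZB_k$ contains a cube of generation $\asymp r$ containing $B(x,r)$; equivalently, at each scale $\delta^k\asymp r$ the point $x$ must lie at distance $\gtrsim r$ from the boundary of the generation-$k$ cells of at least one of the $l$ systems. Whether this holds at scale $k$ is governed by the level-$k$ center configuration, so choosing alternative base points only ``at some fiducial scale'' cannot control boundary proximity at scales much finer or coarser than that scale (already in $\ZR^n$ the shifted dyadic grids must be shifted coherently at every level, not at one level). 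The technical core of \cite{HyKa} is precisely a selection, at every generation simultaneously, among boundedly many admissible alternative center points (the bound coming from doubling), made coherently enough that the choices organize into $l$ genuine nested systems, together with the verification that for each ball one of them succeeds; nothing in your sketch supplies this coherence across scales. Two smaller gaps: the inner-ball containment $B(x^k_j,c_1\delta^k)\subset Q^k_j$ is not automatic from ``assign each point to the nearest net point down the tree'' --- it requires the geometric-series argument that ancestors of points near $x^k_j$ cannot drift away, with $\delta$ small in terms of $\ZD$; and a dyadic system is a martingale system in the paper's sense (Besicovitch constant $D=1$) not ``trivially,'' since cubes of different generations overlap --- one must pass to the maximal cubes of a given subcollection and use nestedness.
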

	\begin{proof}[Proof of \trm {T17}]
		Apply \trm {T15}. For every $B\in\ZU(\rho)$ there exists a set $Q(B)\in \ZB$ such that 
		\begin{equation}\label{z54}
		B\subset Q(B),\quad \mu(Q(B))\lesssim\mu(B). 
		\end{equation} 
		We shall consider the sparse operators 
		\begin{equation*}
		\ZA_kf(x)=\sum_{B\in \ZS:\,Q(B)\in \ZB_k}\langle f\rangle_{Q(B)}\ZI_{Q(B)}(x),\quad k=1,2,\ldots,l.
		\end{equation*} 
		From \e {z54} it follows that
		\begin{align}
		\ZA_{\ZS}f(x)&= \sum_{B\in\ZS}\langle f\rangle_B\ZI_B(x)\lesssim\sum_{B\in\ZS}\langle f\rangle_{Q(B)}\ZI_{Q(B)}(x)\label{z1}\\
		&\le \sum_{k=1}^l\sum_{B\in \ZS:\,Q(B)\in \ZB_k}\langle f\rangle_{Q(B)}\ZI_{Q(B)}(x)\nonumber\\
		&=\sum_{k=1}^l\ZA_kf(x).\nonumber
		\end{align}
		Since each $\ZB_k$ is martingale system, by \trm {T3} (for $D=1$) we conclude that
		\begin{equation*}
		\|\ZA_kf\|_{L^p(\omega)}\lesssim c_p [\omega]_{A_p}^{\max\{1,1/(p-1)\}}.
		\end{equation*}
		Combining this and \e {z1}, we get \e {z2}.
	\end{proof}
	
	Let $(X,\rho,\ZM,\mu)$ be a space of homogeneous type and $K(x,y):X\times X\to \ZR$ be a measurable function. 
	Given ball $\ZB\in \ZU(\rho)$ define the function
	\begin{align}
	&\phi_B(t)=\sup_{x,x'\in B,\, y\in X\setminus B(c(B),t)}|K(x,y)-K(x',y)|,\text { if } t\ge 2r(B),\label{h4}\\
	&\phi_B(t)=\phi_B(2r(B))\text { if } 0\le t<2r(B),\label{a90}
	\end{align}
	which is clearly decreasing on $[0,\infty)$. Denote
	\begin{align}
	&R=\sup_{B\in \ZB}\int_{X} \phi_B\left(\rho(y,c(B))\right)d\mu(y),\label{h5}\\
	&d_B=\sup_{x\in B,\, y\in X\setminus 2B}|K(x,y)| .\nonumber\\
	\end{align}
	\begin{definition}
		An operator $T:L^1(X)\to L^0(X)$ is said to be of Calder\'{o}n-Zygmund type if for any $B\in\ZU(\rho)$ it admits the representation
		\begin{equation}\label{h7}
		Tf(x)=\int_{X} K(x,y)f(y)d\mu(y)\text{ whenever } x\in B,\, \supp f\subset X\setminus 2B,
		\end{equation}
		where the kernel $K(x,y)$ satisfies the conditions
		\begin{equation}
		R<\infty,\quad d_B<\infty \text{ for any } B\in\ZU(\rho).
		\end{equation}
	\end{definition}
	
	\begin{theorem}\label{T11}
		If $(X,\rho,\ZM,\mu)$ is a space of homogeneous type such that $\ZU(\rho)$ satisfies the density condition, then any Calder\'{o}n-Zygmund type operator \e {h7} is $\BO$ operator with respect to the ball-basis $\ZU'(\rho)$. Moreover we have $\ZL_1(T)\le R$, where $R$ is \e {h5}.
	\end{theorem}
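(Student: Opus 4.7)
The ball-basis $\ZU'(\rho)$ satisfies the doubling condition by \trm{T18}. Consequently, \trm{T4} reduces the verification that $T\in\BO_{\ZU'(\rho)}$ to checking the localization T1) together with a weak-$L^1$ bound (the standard weak-$(1,1)$ estimate for Calder\'on-Zygmund operators); T2) then follows with admissible $\ZL_2$. The quantitative heart of the statement is therefore $\ZL_1(T)\le R$.

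Fix $B=B(x_0,r_0)\in\ZU(\rho)$ (the case $B=X$ is trivial since $B^*=X$) and $f\in L^1(X)$. By \trm{T18}, the hull has the form $B^*=B(x_0,\varrho)$ with $\varrho\ge 2r_0$, so $X\setminus B^*\subset X\setminus 2B$ and the kernel representation \eqref{h7} applies to $g=f\cdot\ZI_{X\setminus B^*}$. For any $x,x'\in B$ and $y\in X\setminus B^*$ with $\rho(y,c(B))=t\ge\varrho\ge 2r_0$, the definition \eqref{h4} yields $|K(x,y)-K(x',y)|\le\phi_B(t)$, and therefore
\begin{equation*}
|Tg(x)-Tg(x')|\le \int_X\phi_B(\rho(y,c(B)))\,|f(y)|\,d\mu(y).
\end{equation*}

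A layer-cake identity plus Fubini reduces the right-hand side to $R\cdot\langle f\rangle_B^*$. Using $\phi_B(\rho(y,c(B)))=\int_0^\infty\ZI_{\rho(y,c(B))\le s}(-d\phi_B(s))$ (legitimate because $\phi_B$ is decreasing with $\phi_B\to 0$ at infinity, a consequence of $R<\infty$) and swapping the order of integration gives
\begin{equation*}
\int_X\phi_B(\rho(y,c(B)))|f(y)|d\mu(y)=\int_0^\infty\biggl(\int_{B(c(B),s)}|f|d\mu\biggr)(-d\phi_B(s)).
\end{equation*}
The extension \eqref{a90} makes $-d\phi_B\equiv 0$ on $[0,2r_0)$, while for $s\ge 2r_0$ the ball $B(c(B),s)\supset B$ is an admissible ancestor, so $\int_{B(c(B),s)}|f|d\mu\le\mu(B(c(B),s))\langle f\rangle_B^*$. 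Pulling out $\langle f\rangle_B^*$ and reversing Fubini reconstructs $\int_X\phi_B(\rho(y,c(B)))d\mu(y)\le R$, which gives $|Tg(x)-Tg(x')|\le R\cdot\langle f\rangle_B^*$ and hence T1) with $\ZL_1(T)\le R$.

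The main technical obstacle is the layer-cake bookkeeping: one must exploit the extension \eqref{a90} to kill $-d\phi_B$ on $[0,2r_0)$, so that every ball $B(c(B),s)$ appearing in the integrand genuinely contains $B$ and legitimately contributes to $\langle f\rangle_B^*$. Verifying that the swap of integrals is justified (both integrands are nonnegative) is then straightforward. Once T1) is in hand, T2) follows from \trm{T4} via the doubling of $\ZU'(\rho)$ under the weak-$L^1$ hypothesis; alternatively, T2) admits a direct check combining the doubling choice \eqref{h73} with the pointwise kernel bound $d_A$.
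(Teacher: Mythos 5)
Your proposal is correct and follows essentially the same route as the paper: establish T1) by reducing $|Tg(x)-Tg(x')|$ to $\int_X\phi_B(\rho(y,c(B)))|f(y)|d\mu(y)\le R\cdot\langle f\rangle_B^*$ using that $\phi_B$ is decreasing and constant on $[0,2r(B))$, so only balls containing $B$ contribute, and then obtain T2) from \trm{T18} (doubling of $\ZU'(\rho)$) via \trm{T4}. The only cosmetic difference is that the paper realizes the monotone decomposition of $\phi_B$ by an upper step-function approximation $\sum_k a_k\ZI_{[0,r_k]}$ with $r_1=2r(B)$ rather than your Stieltjes layer-cake identity; the two are equivalent.
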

	\begin{proof}
		First note that \trm {T18} implies that $\ZU'(\rho)$ is a ball basis having the doubling property and for $B=B(x_0,r)\in \ZU(\rho)$ the hull ball $B^*$ has the form $B(x_0,R)$, $2r\le R<\infty$. Since $\ZU'(\rho)$ satisfies the doubling condition, according to \trm {T4}, we need to verify only T1)-condition.  Since $\phi_B(t)$ is decreasing, we can prove
		\begin{equation}\label{h3}
		\int_{X}\phi_B(\rho(y,c(B)))|f(y)|d\mu(y)\le R\cdot \langle f \rangle^*_B.
		\end{equation} 
		Indeed, one can easily find a step function $\psi(t)$ on $[0,\infty)$ such that
		\begin{align*}
		&	\phi_B(t)\le \psi(t)=\sum_{k=1}^\infty a_k\ZI_{[0,r_k]}(t),\, a_k>0,\, 2r(B)=r_1<r_2<\ldots,\\
		&	\int_{X} \psi\left(\rho(y,c(B))\right)d\mu(y)<\int_{X} \phi_B\left(\rho(y,c(B))\right)d\mu(y)+\delta\le R+\delta,
		\end{align*}
		where $\delta>0$ can be enough small.  We have
		\begin{align*}
		\int_{X}\phi_B(\rho(y,c(B)))|f(y)|d\mu(y)&\le 	\int_{X}\psi(\rho(y,c(B)))|f(y)|d\mu(y)\\
		&=\sum_{k=1}^\infty a_k \int_{B(c(B),r_k)}|f(y)|d\mu(y)\\
		&\le \langle f \rangle^*_B\sum_{k=1}^\infty a_k\mu(B(c(B),r_k))\\
		&= \langle f \rangle^*_B	\int_{X} \psi\left(\rho(y,c(B))\right)d\mu(y)\\
		&\le \langle f \rangle^*_B(R+\delta).
		\end{align*}
		Since $\delta>0$ is small enough, we get \e {h3}. Now take $B\in \ZU(\rho)$, $f\in L^1(X)$ and suppose that $x,x'\in B$.  From  \e {h4} it follows that
		\begin{equation*}
		|K(x,y)-K(x',y)|\le \phi_B(\rho(y,c(B)))|\text { whenever }y\in X\setminus 2B.
		\end{equation*}
		Thus, using  \e {h3} and the relation $B^*\supset 2B$, we get the bound
		\begin{align*}
		|T(f\cdot \ZI_{X\setminus B^*})(x)&-T(f\cdot \ZI_{X\setminus B^*})(x')|\nonumber\\
		&=\left|\int_{X\setminus B^*}(K(x,y)-K(x',y))f(y)d\mu(y)\right|\nonumber\\
		&\le\int_{X\setminus 2B}|K(x,y)-K(x',y)||f(y)|d\mu(y)\nonumber\\
		&\le \int_{X}\phi_B(\rho(y,c(B)))|f(y)|d\mu(y)\nonumber\\
		&\le R\cdot \langle f \rangle^*_B,\nonumber
		\end{align*}
		which gives T1)-condition.
	\end{proof}
	Let $(X,\rho,\ZM,\mu)$ be a space of homogeneous type and  $ \omega:[0,\infty) \to [0,\infty)$ be an increasing function satisfying $\omega(t+s)\le \omega(t)+\omega(s)$, $\omega(0)=0$, and the Dini condition
	\begin{equation}\label{h75}
	C_1=\int_0^1\frac{\omega(t)}{t}dt<\infty.
	\end{equation} 
	An operator $T:L^1(X)\to L^0(X)$ is said to be $\omega$-Calder\'{o}n-Zygmund operator if it has the representation \e {h7} and for any ball $B\in\ZU(\rho)$  we have 
	\begin{align}
	&\sup_{x\in B,\, y\in X\setminus B(c(B),t)}|K(x,y)|\le \frac{C_2}{\mu(B(c(B),t))},\label{h74}\\
	&\sup_{x,x'\in B,\, y\in X\setminus B(c(B),t)}|K(x,y)-K(x',y)|\le\frac{ \omega\left(\frac{r(B)}{t}\right)}{\mu(B(c(B),t))},\label{h77}\\
	&\sup_{y,y'\in B,\, x\in X\setminus B(c(B),t)}|K(x,y)-K(x,y')|\le  \frac{\omega\left(\frac{r(B)}{t}\right)}{\mu(B(c(B),t))},\label{h80}
	\end{align}
	where all these inequalities hold for any $t>2r(B)$.
	\begin{theorem}\label{T19}
		Let $(X,\rho,\ZM,\mu)$ be a space of homogeneous type such that $\ZU(\rho)$ satisfies the density condition. If $T$ is a $\omega$-Calder\'{o}n-Zygmund operator, then it is a $\BO$ operator with respect to the ball-basis $\ZU'(\rho)$. Moreover, we have the estimates 
		\begin{equation}\label{z15}
		\ZL_1(T)\lesssim C_1,\quad \ZL_2(T)\lesssim C_2.
		\end{equation}
	\end{theorem}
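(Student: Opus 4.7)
The plan is to deduce the bound on $\ZL_1(T)$ from \trm{T11}, and to verify condition T2) by hand using the size estimate \e{h74}. First I check that every $\omega$-Calder\'on-Zygmund operator is of Calder\'on-Zygmund type in the sense of \e{h7}: the size bound \e{h74} applied with $t=2r(B)$ gives $d_B\le C_2/\mu(2B)<\infty$ for every ball, and the finiteness of $R$ in \e{h5} will drop out of the next computation.

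To estimate $R$, fix $B=B(c(B),r(B))\in\ZU(\rho)$. By \e{h77} together with \e{a90},
\[
\phi_B(t)\le \frac{\omega(r(B)/t)}{\mu(B(c(B),t))}\quad\text{for }t\ge 2r(B),\qquad \phi_B(t)=\phi_B(2r(B))\text{ otherwise.}
\]
I decompose $X=2B\cup\bigcup_{k\ge 1}(2^{k+1}B\setminus 2^k B)$, which covers $X$ because $\rho(\cdot,c(B))$ is finite-valued. On each annulus I use monotonicity of $\phi_B$ together with the doubling bound $\mu(2^{k+1}B)\le \ZH\mu(2^k B)$ from \e{h101} to get
\[
\int_{2^{k+1}B\setminus 2^k B}\phi_B(\rho(y,c(B)))\,d\mu(y)\lesssim \omega(2^{-k}),
\]
and the integral over $2B$ contributes at most $\omega(1/2)$. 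Summing and comparing $\sum_{k\ge 1}\omega(2^{-k})$ to $\int_0^1 \omega(t)/t\,dt$ via monotonicity of $\omega$ on the intervals $[2^{-k-1},2^{-k}]$ yields $R\lesssim C_1$. Then \trm{T11} delivers $T\in \BO_{\ZU'(\rho)}$ with $\ZL_1(T)\le R\lesssim C_1$.

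For the bound on $\ZL_2(T)$, by \trm{T18} the basis $\ZU'(\rho)$ satisfies the doubling condition, so for any $A\in\ZU(\rho)$ with $A^*\neq X$ I can pick $B\supsetneq A$ with $\mu(B)\le \eta\mu(A)$. Since $A$ and $A^*$ are concentric with $r(A^*)\ge 2r(A)$, the size bound \e{h74} applied with the ball $A$ at scale $t=r(A^*)$ gives $|K(x,y)|\le C_2/\mu(A^*)$ uniformly for $x\in A$ and $y\in X\setminus A^*$. Plugging into the representation \e{h7} and using $\mu(B^*)\le \ZK\mu(B)\le \ZK\eta\mu(A)\le \ZK\eta\mu(A^*)$, I obtain
\[
|T(f\ZI_{B^*\setminus A^*})(x)|\le \frac{C_2\mu(B^*)}{\mu(A^*)}\langle f\rangle_{B^*}\lesssim C_2\langle f\rangle_{B^*},
\]
which yields $\ZL_2(T)\lesssim C_2$.

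The only real technicality is the comparison of $\sum_k\omega(2^{-k})$ with the Dini integral; the remainder is a routine matching of the $\omega$-Calder\'on-Zygmund kernel bounds against the abstract axioms T1) and T2), made possible by the concentric form of the hull $B^*$ and the doubling property of $\ZU'(\rho)$ established in \trm{T18}.
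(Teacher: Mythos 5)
Your proof is correct and follows essentially the same route as the paper: the annular decomposition over $2^{k+1}B\setminus 2^kB$ with the doubling property and the Dini condition gives $R\lesssim C_1$, which is fed into \trm{T11} for $\ZL_1(T)$, and the kernel size bound \e{h74} at scale $t=r(A^*)$ verifies T2) directly with $\ZL_2(T)\lesssim C_2$. The only cosmetic difference is that you choose the witnessing ball $B\supsetneq A$ via the abstract doubling property of $\ZU'(\rho)$ from \trm{T18}, whereas the paper constructs the concentric ball $B(x_0,2L)\supset A^*$ explicitly; both yield $\mu(B^*)\lesssim\mu(A^*)$ and hence the same conclusion.
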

	\begin{proof}
		Taking into account \e {h77} and the definition of function $\phi_B$ in \e {h4}, \e {a90}, for every $B=B(x_0,r)\in\ZU(\rho)$ we have 
		\begin{align*}
		&\phi_B(t)\le \omega\left(\frac{r(B)}{t}\right)\frac{1}{\mu(B(c(B),t))}\text { if } t\ge 2r(B),\\
		&\phi_B(t)\le \omega\left(\frac{1}{2}\right)\frac{1}{\mu(2B)}\text { if } 0\le t<2r(B).
		\end{align*}
		Thus, applying doubling property \e {h101}, \e {h75} and the subadditivity of $\omega$, we get
		\begin{align*}
		\int_{X} \phi_B\left(\rho(y,c(B))\right)d\mu(y)&= \int_{2B} \phi_B\left(\rho(y,c(B))\right)d\mu(y)\\
		&\qquad +\sum_{k=1}^\infty\int_{(2^{k+1}B)\setminus (2^{k}B)} \phi_B\left(\rho(y,c(B))\right)d\mu(y)\\
		&\le \omega\left(\frac{1}{2}\right)+\sum_{k=1}^\infty \omega\left(2^{-k}\right)\frac{\mu(2^{k+1}B)-\mu(2^{k}B)}{\mu(2^{k}B)}\\
		&\lesssim \sum_{k=1}^\infty \omega(2^{-k})\lesssim  C_1.
		\end{align*} 
		Since this inequality holds for any ball $B$, applying \trm {T11}, we get the first estimate in \e {z15}. To estimate $\ZL_2(T)$  take a ball $A=B(x_0,r)\in \ZU(\rho)$ with $A^*\neq X$. According to \trm {T18}, $A^*=B(x_0,R)$, $R\ge 2r$. Denote
		\begin{equation*}
		L=\sup_{r'\ge R:\,B(x_0,r')=B(x_0,R)}r',
		\end{equation*}
		and let $B=B(x_0,2L)$. Since $B(x_0,R)\neq X$, we have $L<\infty$ and  
		\begin{equation*}
		A^*=B(x_0,R)=B(x_0,L)\subsetneq B(x_0,2L)=B.
		\end{equation*} 
		Thus we get $A\subsetneq B$ and
		\begin{equation}\label{h100}
		\mu(B^*)\lesssim\mu(B(x_0,2L))\le \ZH\mu(B(x_0,L))= \ZH\mu(A^*).
		\end{equation} 
		Since $r(A^*)\ge 2r(A)$, from \e {h74} we obtain
		\begin{equation*}
		\sup_{x\in A,\, y\in X\setminus A^*}|K(x,y)|\le \frac{C_2}{\mu(A^*)}.
		\end{equation*}
		Thus, using also \e {h100}, for any point $x\in A$ we get
		\begin{align*}
		|T(f\cdot\ZI_{B^*\setminus A^*})(x)|&\le \int_{B^*\setminus A^*}|K(x,y)||f(y)|dy\\
		&\le \frac{C_2}{\mu(A^*)}\int_{B^*}|f(y)|dy\\
		&\lesssim C_2 \langle f\rangle_{B^*}.
		\end{align*}
		Hence we obtain $\ZL_2(T)\lesssim C_2$ completing the proof of theorem. 
	\end{proof}
	Combining \trm {T1}, \trm {T17} and \trm {T19}, we immediately get
	\begin{theorem}\label{T13}
		Let $(X,\rho,\ZM,\mu)$ be a space of homogeneous type such that $\ZU(\rho)$ satisfies the density condition. If $T$ is a $\omega$-Calder\'{o}n-Zygmund operator and the weight $ w$ satisfies $A_p$ condition with respect to the ball-basis $\ZU(\rho)$, $1<p<\infty$, then we have
		\begin{equation}\label{z55}
		\|T\|_{L^p( w)\to L^p( w)}\le c_p(C_1+C_2+\|T\|_{L^1\to L^{1,\infty}}) [ w]_{A_p}^{\max\{1,1/(p-1)\}}.
		\end{equation} 
	\end{theorem}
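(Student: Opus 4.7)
The plan is to chain together the three main results already established in the paper: \trm{T19} (which says $\omega$-Calder\'on-Zygmund operators are $\BO$ with controlled constants), \trm{T1} (pointwise sparse domination for $\BO$ operators), and \trm{T17} (sharp weighted bounds for sparse operators on spaces of homogeneous type).

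First I would invoke \trm{T19} to conclude that $T\in \BO_{\ZU'(\rho)}$ with $\ZL_1(T)\lesssim C_1$ and $\ZL_2(T)\lesssim C_2$. Since $T$ is assumed to satisfy weak-$L^1$ (this is encoded in the statement via the factor $\|T\|_{L^1\to L^{1,\infty}}$), the hypotheses of \trm{T1} are met with $r=1$. Next I would apply \trm{T1} to the given $f\in L^p(w)\subset L^1_{\text{loc}}(X)$: for any ball $B\in \ZU'(\rho)$ there exists a family $\ZS=\ZS(f,B)$ which is a union of two $\gamma$-sparse collections $\ZS_1\cup\ZS_2$ such that
\begin{equation*}
|Tf(x)|\lesssim (C_1+C_2+\|T\|_{L^1\to L^{1,\infty}})\cdot\ZA_{\ZS}f(x)\quad\text{for a.e. } x\in B.
\end{equation*}
To upgrade from ``almost everywhere in $B$'' to ``almost everywhere in $X$'', I would fix an exhausting sequence of balls $B_k$ provided by \lem{L1} (so that $X=\bigcup_k B_k$ up to the inclusions given there) and apply \trm{T1} on each $B_k$; the resulting sparse families $\ZS^{(k)}$ may depend on $k$, but for the purpose of estimating $\|Tf\|_{L^p(w)}$ by Fatou's lemma one only needs the dominating inequality to hold a.e. on $X$ with the same multiplicative constant.

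Once the pointwise bound holds a.e., I would take $L^p(w)$-norms. Writing $\ZA_\ZS=\ZA_{\ZS_1}+\ZA_{\ZS_2}$, the triangle inequality reduces matters to bounding a single $\gamma$-sparse operator. By \trm{T17}, for any $\gamma$-sparse $\ZS_j\subset \ZU(\rho)$ (note: the elements of $\ZS$ produced by \trm{T1} lie in $\ZU'(\rho)$, but the possible exceptional set $X$ contributes at most one term, which is harmless or can be absorbed into the constant in the finite-measure case),
\begin{equation*}
\|\ZA_{\ZS_j}f\|_{L^p(w)}\lesssim c_p\,[w]_{A_p}^{\max\{1,1/(p-1)\}}\|f\|_{L^p(w)}.
\end{equation*}
Combining these two bounds yields \e{z55}.

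I do not expect a genuine obstacle here, since this is a pure composition of previous results; the only minor bookkeeping is (i) the passage from a local sparse domination on $B$ to a global one, handled by the exhaustion from \lem{L1}, and (ii) the observation that a union of two sparse families produces an operator whose $L^p(w)$-norm is controlled by twice the single-sparse bound. The dependence on $[w]_{A_p}$ in the final estimate is exactly the one supplied by \trm{T17}, and the linear dependence on $C_1+C_2+\|T\|_{L^1\to L^{1,\infty}}$ comes directly from the constants $\ZL_1,\ZL_2$ furnished by \trm{T19} together with \trm{T1}.
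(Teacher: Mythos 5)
Your proposal is correct and matches the paper's proof, which is exactly the stated composition: \trm{T19} gives $T\in\BO$ with $\ZL_1\lesssim C_1$, $\ZL_2\lesssim C_2$, \trm{T1} gives the sparse domination, and \trm{T17} gives the weighted bound for the sparse operators. The paper states this combination as immediate, so your extra bookkeeping (exhaustion via \lem{L1} and splitting the union of two sparse families) only spells out what the paper leaves implicit.
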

	It is well known that any $\omega$-Calder\'{o}n-Zygmund operator, which is bounded on $L^2(X)$ satisfies the bound
	\begin{equation*}
	\|T\|_{L^1\to L^{1,\infty}}\lesssim \|T\|_{L^2\to L^2}.
	\end{equation*} 
	So in \e {z55} $\|T\|_{L^1\to L^{1,\infty}}$ can be replaced by $\|T\|_{L^2\to L^2}$. Note that the Hyt\"onen-Roncal-Tapiola \cite{Hyt6} inequality is the case of \e {z55} for the $\omega$-Calder\'{o}n-Zygmund operators on $\ZR^n$. Besides,  \e {z55} is a stronger version of the Anderson-Vagharshakyan \cite{Vag2} inequality, where the case of $\omega(t)=t^\delta$ was considered.
	\section{Other examples of BO operators}\label{S8}
	\begin{theorem}\label{T5}
		If $(X,\ZM,\mu)$ is a measure space with a ball-basis $\ZB$, then the maximal operator $M$ corresponding to $r=1$ in \e {1-1} is $\BO$ operator with respect to $\ZB$.
	\end{theorem}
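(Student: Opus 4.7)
The plan is to verify conditions T1) and T2), since $M$ is manifestly subadditive and already satisfies a weak-$L^1$ bound by \trm{T1-1}. The two-balls relation implied by B4) is the workhorse throughout.

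First I would handle T1). Fix $B\in\ZB$, $x,x'\in B$, and write $g=f\cdot\ZI_{X\setminus B^*}$. Any ball $C\ni x$ contributing a non-zero integral to $Mg(x)$ must meet $X\setminus B^*$, so cannot lie inside $B^*$. Since $C\cap B\ni x$, the two-balls relation forces $\mu(C)>2\mu(B)$, and applying it once more (now with $B$ playing the smaller role) gives $B\subset C^*$; in particular $x'\in C^*$. Hence
\begin{equation*}
\frac{1}{\mu(C)}\int_C|g|\le\frac{1}{\mu(C)}\int_{C^*}|f|\le\ZK\cdot\frac{1}{\mu(C^*)}\int_{C^*}|f|\le\ZK\,\langle f\rangle^*_B,
\end{equation*}
and taking a supremum over $C$ yields $Mg(x),Mg(x')\le\ZK\langle f\rangle^*_B$. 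Since both are non-negative, their difference is bounded by the larger, giving $\ZL_1(M)\le\ZK$.

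For T2) the situation is more delicate because the ball-basis is not assumed doubling, so one cannot simply take $B=A^*$ (which may equal $A$). My plan is to set
\begin{equation*}
b=\inf\{\mu(C):C\in\ZB,\,C\cap A\neq\varnothing,\,\mu(C)>2\mu(A)\},
\end{equation*}
show $b<\infty$ using B2) and the two-balls relation (for $x\in A$ and any $y\in X\setminus A^*$, any ball supplied by B2) through $x,y$ fails to lie in $A^*$, hence has measure $>2\mu(A)$), pick a $C\in\ZB$ realizing $b\le\mu(C)\le 2b$, and take $B=C^*$. The two-balls relation gives $A\subset C^*=B$, while $\mu(B)\ge\mu(C)\ge 2\mu(A)>\mu(A)$ forces $B\supsetneq A$. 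For $f$ supported on $B^*\setminus A^*$ and $x\in A$, any ball $D\ni x$ with non-vanishing integral is not contained in $A^*$, hence $\mu(D)>2\mu(A)$, so by minimality of $b$, $\mu(D)\ge b$. Combining this with $\mu(B^*)\le\ZK^2\mu(C)\le 2\ZK^2 b$, I obtain
\begin{equation*}
\frac{1}{\mu(D)}\int_D|f|\le\frac{\mu(B^*)}{\mu(D)}\langle f\rangle_{B^*}\le\frac{2\ZK^2 b}{b}\langle f\rangle_{B^*}=2\ZK^2\langle f\rangle_{B^*},
\end{equation*}
giving $\ZL_2(M)\le 2\ZK^2$.

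The hard part is T2): one must manufacture $B\supsetneq A$ whose hull has measure comparable to the minimal measure of a ball that meets $A$ and escapes $A^*$, using only the B-axioms rather than any doubling assumption. The infimum construction above is what makes this work; once $B$ is chosen, the bound on $\Delta_M(A,B)$ falls out immediately from the two-balls relation and the size estimate $\mu(C^{[2]})\le\ZK^2\mu(C)$.
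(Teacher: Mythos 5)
Your proof is correct and takes essentially the same route as the paper's: T1) is verified by using the two-balls relation to show any ball contributing nonzero mass to $M(f\cdot\ZI_{X\setminus B^*})$ must be large enough that its hull contains $B$, and T2) is handled by taking a near-minimizer of measure among large balls meeting $A$ and passing to its hull, exactly as the paper does with its infimum $\gamma$ and the choice $B'=A^{[1]}$. The only differences are cosmetic (you bound both values of $Mg$ rather than their difference, and your infimum uses the threshold $2\mu(A)$ instead of $\mu(A)$), and you make explicit the nonemptiness of the infimum's index set via B2), a point the paper leaves implicit.
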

	\begin{proof}
		In order to establish T1) condition we let $B$ be an arbitrary ball. Take two points $x,x'\in B$ and a nonzero function $f\in L^1(X)$ with 
		\begin{equation}\label{z56}
		\supp f\in X\setminus B^{[1]}.
		\end{equation} 
		Suppose that 
		\begin{equation}\label{z36}
		Mf(x)\ge Mf(x').
		\end{equation}
		We have $\langle f \rangle^*_B> 0$. Thus, by the definition of maximal operator we get 
		\begin{equation}\label{z37}
		Mf(x)\le \frac{1}{\mu(A)}\int_A|f|+\langle f \rangle^*_B
		\end{equation}
		for some ball $A\ni x$. If $\mu(A)\le  \mu(B)$, then by two balls relation we have $A\subset B^{[1]}$ and so by \e {z56} we get
		\begin{equation*}
		\frac{1}{\mu(A)}\int_A|f|=0.
		\end{equation*}
		Therefore according to \e {z36} and \e {z37} we have
		\begin{equation*}
		|Mf(x)-Mf(x')|=Mf(x)-Mf(x')\le \langle f \rangle^*_B.
		\end{equation*}
		If $\mu(A)> \mu(B)$, then we get $B\subset A^{[1]}$ and so
		\begin{align*}
		Mf(x)-Mf(x')&\le \frac{1}{\mu(A)}\int_A|f|+\langle f \rangle^*_B\\
		\lesssim  &\frac{1}{\mu(A^{[1]})}\int_{A^{[1]}}|f|+\langle f \rangle^*_B\lesssim \langle f \rangle^*_B.
		\end{align*} 
		This gives T1)-condition. To prove T2)-condition fix a ball $B$ and set
		\begin{align*}
		&\ZA=\{A\in\ZB:\,A\cap B\neq\varnothing,\, \mu(A)> \mu(B)\},\\
		&\gamma=\inf_{A\in \ZA}\mu(A).
		\end{align*}
		There exist a ball $A\in \ZA$ such that
		\begin{equation*}
		\gamma\le \mu(A)<2\gamma.
		\end{equation*}
		Define $B'=A^{[1]}$. One can check that
		\begin{equation}
		B\subsetneq A^{[1]}=B'.
		\end{equation}
		On the other hand for any function $f\in L^1(X)$ and any point $x\in B$ we have
		\begin{equation}\label{h22}
		M(f\cdot \ZI_{B'^{[1]}\setminus B^{[1]}})(x)=\frac{1}{\mu(C)}\int_C|f|\cdot \ZI_{B'^{[1]}\setminus B^{[1]}}+\delta,
		\end{equation}
		for some ball $C\ni x$ and a number $\delta>0$ that can be taken arbitrarily small. We can suppose that $\mu(C)>\mu(B)$, since otherwise we should have $C\subset B^{[1]}$, which will imply 
		\begin{equation*}
		\frac{1}{\mu(C)}\int_C|f|\cdot \ZI_{B'^{[1]}\setminus B^{[1]}}=0.
		\end{equation*}
		Hence, since we also have $C\cap B\neq\varnothing$, we get $C\in \ZA$. Thus we obtain $\mu(C)\ge \gamma$ and therefore
		\begin{equation*}
		\mu(C)>\frac{\mu(A)}{2}\ge \frac{\mu(A^{[2]})}{2\ZK^2}=\frac{\mu(B'^{[1]})}{2\ZK^2}.
		\end{equation*} 
		Hence we have
		\begin{equation}\label{h23}
		\frac{1}{\mu(C)}\int_C|f|\cdot \ZI_{B'^{[1]}\setminus B^{[1]}}\lesssim \frac{1}{\mu(B'^{[1]})}\int_{B'^{[1]}}|f|=\langle f\rangle_{B'^{[1]}}.
		\end{equation}
		Combining \e {h22} and \e {h23}, we get 
		\begin{equation*}
		M(f\cdot \ZI_{B'^{[1]}\setminus B^{[1]}	})(x)\lesssim \langle f\rangle_{B^{[1]}}
		\end{equation*}
		and so T2)-condition is proved.
	\end{proof}
	Thus, applying \trm {T1}, we get  
	\begin{theorem}\label{T21}
		Let $(X,\ZM,\mu)$ be a measure space with a ball-basis $\ZB$ and $ w$ be a $A_p$ weight with $1<p<\infty$. Then the maximal function \e {1-1} satisfies the bound
		\begin{equation*}
		\|M\|_{L^p( w)\to L^p( w)}\lesssim [w]_{A_p}^{\max\left\{\frac{p+2}{p(p-1)},\frac{3p-2}{p}\right\}}.
		\end{equation*}
		If in addition $\ZB$ satisfies the Besicovitch condition, then  
		\begin{equation*}
		\|M\|_{L^p( w)\to L^p( w)}\lesssim [w]_{A_p}^{\max\{1,1/(p-1)\}}.
		\end{equation*}
	\end{theorem}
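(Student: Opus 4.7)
The plan is to recognize Theorem~\ref{T21} as a straightforward consequence of the machinery already assembled, specifically the sparse-domination Theorem~\ref{T1} together with the weighted bounds for sparse operators established in Section~\ref{S6}. The only operator-specific inputs needed are that $M$ is a $\BO$ operator and that $M$ is weak-$L^1$ bounded.

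First I would verify the two prerequisites. Theorem~\ref{T1-1} applied with $r=1$ gives
\begin{equation*}
\|M\|_{L^1\to L^{1,\infty}}\le \ZK,
\end{equation*}
so $M$ satisfies the weak-$L^1$ inequality with an admissible constant. Next, Theorem~\ref{T5} asserts that $M\in\BO_\ZB$; inspection of that proof shows that the constants $\ZL_1(M)$ and $\ZL_2(M)$ are bounded by admissible constants depending only on $\ZK$. Thus the quantity
\begin{equation*}
\ZL_1(M)+\ZL_2(M)+\|M\|_{L^1\to L^{1,\infty}}
\end{equation*}
is admissible.

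For the general estimate, I would apply Theorem~\ref{T10} with $T=M$ and $r=1$, which directly yields
\begin{equation*}
\|Mf\|_{L^p(w)\to L^p(w)}\lesssim [w]_{A_p}^{\max\left\{\frac{p+2}{p(p-1)},\frac{3p-2}{p}\right\}},
\end{equation*}
the constant being admissible and independent of the weight. For the refined estimate under the Besicovitch assumption, I would instead invoke Theorem~\ref{T2} with $T=M$, obtaining the sharper exponent $\max\{1,1/(p-1)\}$; here the Besicovitch constant enters the implied constant but not the exponent on $[w]_{A_p}$.

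I do not expect any genuine obstacle in this proof: it is purely a matter of citing Theorem~\ref{T5} (to place $M$ in the $\BO$ class), Theorem~\ref{T1-1} (to supply the weak-$L^1$ bound), and then Theorems~\ref{T10} and \ref{T2} (to convert these into weighted bounds). The substantive work occurs upstream, in the verification of the localization and connectivity axioms T1) and T2) for $M$, which was already carried out in the proof of Theorem~\ref{T5}, and in the weighted theory for sparse operators developed via Lemmas~\ref{L10}--\ref{L13} and Theorems~\ref{T9}, \ref{T3}.
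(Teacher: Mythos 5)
Your proof is correct and follows essentially the same route as the paper: the paper states Theorem~\ref{T21} immediately after Theorem~\ref{T5} with the remark that it follows from Theorem~\ref{T1}, which, combined with the sparse-operator weighted bounds of Section~\ref{S6}, is exactly what Theorems~\ref{T10} and~\ref{T2} package. Your explicit check that $\ZL_1(M)$, $\ZL_2(M)$, and $\|M\|_{L^1\to L^{1,\infty}}$ are admissible constants (via the proofs of Theorem~\ref{T5} and Theorem~\ref{T1-1}) correctly fills in the only step left implicit.
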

	\begin{remark}
		\trm {T21} does not give the full weighted estimate like \e {z53}, which is known to be optimal for the maximal function in Euclidean spaces (\cite{Buck}). In the general case the optimality only occurs when $1<p\le 2$. The Buckley \cite{Buck} argument can be applied to get full bound \e {z53} in the case of Besicovitch condition.
	\end{remark}
	
	Another example of $\BO$ operator is the martingale transform. Let $(X,\ZM,\mu)$ be a measure space, and let $\{\ZB_n:\, n\in\ZZ\}$ be a collections of measurable sets such that
	\begin{enumerate}
		\item Each $\ZB_n$ is a finite or countable partition of $X$.
		\item For each $n$ and $A\in \ZB_n$  the set $A$ is the union of sets from $\ZB_{n+1}$.
		\item The collection $\ZB=\cup_{n\in\ZZ}\ZB_n$ generates the $\sigma$-algebra $\ZM$. 
		\item For any points $x,y\in X$ there is a set $A\in X$ such that $x,y\in A$.
	\end{enumerate}
	For a given $A\in \ZB$ let $\pr(A)$ (parent of $A$) be the minimal element of $\ZB$ satisfying $A\subsetneq \pr(A)$. One can easily check that $\ZB$ satisfies the ball-basis conditions B1)-B4). Moreover, for $A\in \ZB$ we can define
	\begin{equation}\label{z5}
	A^*=\left\{\begin{array}{lcl}
	&A\hbox{ if }& \mu(\pr(A))>2\mu(A),\\
	&\pr^n(A)\hbox{ if }& \mu(\pr^n(A))\le 2\mu(A) <\mu(\pr^{n+1}(A)),
	\end{array}
	\right.
	\end{equation}
	and take $\ZK=2$. Consider a function $f\in L^1(X)$. The martingale difference associated with $A\in \ZB$ is 
	\begin{equation*}
	\Delta_Af(x)=\sum_{B:\, \pr(B)=A}\left(\frac{1}{\mu(B)}\int_Bf-\frac{1}{\mu(A)}\int_Af	\right)\ZI_B(x).
	\end{equation*}
	The martingale transform operator is defined by
	\begin{equation*}
	Tf(x)=\sum_{A\in\ZB}\varepsilon_A\Delta_Af(x),
	\end{equation*}
	where $\varepsilon_A=\pm 1$ are fixed. 
	\begin{lemma}\label{L14}
		Any martingale transform $T$ satisfies T1)-condition and moreover, $\ZL_1(T)=0$.
	\end{lemma}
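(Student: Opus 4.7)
The plan is to show that the function $x\mapsto T(f\cdot \ZI_{X\setminus B^*})(x)$ is constant on $B$ for every ball $B\in\ZB$ and every $f\in L^1(X)$, which gives T1) with $\ZL_1=0$ directly. The key structural observation is that the martingale basis is a nested family: for each $x\in X$ the sets of $\ZB$ containing $x$ form a totally ordered chain $\{A_n(x)\}_{n\in\ZZ}$ with $A_n(x)\in\ZB_n$, and the martingale difference $\Delta_A g$ is supported on $A$ and is constant on each child of $A$.

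Fix $B\in\ZB$ and let $g=f\cdot \ZI_{X\setminus B^*}$. By construction \e{z5} we have $B\subset B^*\in\ZB$, so there is an integer $m$ (the level of $B^*$) such that $B^*=A_m(x)$ for every $x\in B$; in particular the portion of the chain $A_m(x)\supset A_{m-1}(x)\supset\cdots$ above level $m$ is the same for all $x\in B$, namely the chain of ancestors of $B^*$. First I would rule out the contribution of every $A\in\ZB$ with $A\subset B^*$: for such $A$, both $A$ and its children are contained in $B^*$, so $\int_A g=\int_{A'}g=0$ for every child $A'$ of $A$, hence $\Delta_A g\equiv 0$. Next, for $A=B^*$ itself, $\int_{A}g=0$ and each child of $B^*$ sits inside $B^*$, so again $\Delta_{B^*}g\equiv 0$.

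Thus only ancestors $A=\pr^k(B^*)$ with $k\ge 1$ can contribute. For such an $A$, the unique child of $A$ containing $B^*$ is $\pr^{k-1}(B^*)$, and since $B\subset B^*\subset \pr^{k-1}(B^*)$, all points $x\in B$ lie in the same child of $A$. On that child, $\Delta_A g$ takes the single value
\begin{equation*}
\frac{1}{\mu(\pr^{k-1}(B^*))}\int_{\pr^{k-1}(B^*)\setminus B^*}f-\frac{1}{\mu(\pr^{k}(B^*))}\int_{\pr^{k}(B^*)\setminus B^*}f,
\end{equation*}
which depends only on $B^*$ and $k$, not on $x\in B$. Summing against $\varepsilon_A$ over all $k\ge 1$ shows that $Tg(x)$ is constant on $B$, so $|Tg(x)-Tg(x')|=0$ for $x,x'\in B$, proving T1) with $\ZL_1(T)=0$.

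The only subtlety I anticipate is the edge case where $B^*=B$ (i.e.\ $\mu(\pr(B))>2\mu(B)$), but the argument above uses only $B\subset B^*$ and the nested/partition structure, so this case is handled uniformly. One also has to check convergence of the infinite sum of ancestor contributions, but this is not needed for the oscillation estimate since both $Tg(x)$ and $Tg(x')$ are given by the same (formally identical) expression; their difference is literally zero term by term, so convergence of the sum is irrelevant to the bound $|Tg(x)-Tg(x')|=0$.
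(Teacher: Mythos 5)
Your proof is correct and follows essentially the same route as the paper: both arguments show that for $g=f\cdot\ZI_{X\setminus B^*}$ every single martingale difference $\Delta_A g$ takes the same value at any two points of $B$ (zero when $A\subseteq B^*$ or $A\cap B^*=\varnothing$, and a common constant on the child $\pr^{k-1}(B^*)$ when $A$ is an ancestor of $B^*$), so $Tg$ is constant on $B$ and $\ZL_1(T)=0$. Your write-up just spells out in more detail the two observations the paper states without proof.
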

	\begin{proof}
		Take a function $f\in L^1(X)$ with 
		\begin{equation*}
		\supp f\in X\setminus A^{*}
		\end{equation*}
		and two points $x,x'\in A$. Observe that 
		\begin{align*}
		&\Delta_B f(x)=\Delta_B f(x'),\text { if } B\supset A^{*},\\
		&\Delta_B f(x)=\Delta_B f(x')=0, \text { if } B\subseteq A^{*}\text { or } B\cap A^{*}=\varnothing.
		\end{align*}
		Thus we have $\Delta_B f(x)=\Delta_B f(x')$ for any ball $B\in\ZB$ and so $Tf(x)=Tf(x')$. This implies $\ZL_1(T)=0$.
	\end{proof}
	\begin{lemma}\label{L5}
		If $T$ is a martingale transform, then for any ball $A\in\ZB$ we have
		\begin{equation}\label{f3}
		\sup_{x\in A,\,f\in L^1(X) }\frac{|T(f\cdot \ZI_{\pr(A)\setminus A})(x)|}{\langle f\rangle_{\pr(A)}}\le 2.
		\end{equation}
	\end{lemma}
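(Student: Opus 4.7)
The plan is to compute $Tg(x)$ directly for $g=f\cdot \ZI_{\pr(A)\setminus A}$ and $x\in A$, by writing out the martingale series $Tg(x)=\sum_{B\in\ZB}\varepsilon_B\Delta_Bg(x)$ and exploiting the cancellation in the martingale differences. Since $\Delta_Bg(x)\neq 0$ forces $x\in B$, the only contributing balls are those in the chain $\{B\in\ZB:x\in B\}$, which consists of the descendants of $A$ containing $x$, the ball $A$ itself, and the ascending chain $\pr(A),\pr^2(A),\ldots$.

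First I would dispose of the balls $B\subseteq A$. If $B\subseteq A$ (in particular $B=A$), then $B$ is disjoint from $\supp g\subseteq \pr(A)\setminus A$, so $\int_B g=0$ and likewise for the unique child of $B$ containing $x$; hence $\Delta_Bg(x)=0$. Next, for $B=\pr(A)$ the unique child of $\pr(A)$ containing $x$ is $A$ (since $A$ is a child of $\pr(A)$ containing $x$); since $\int_Ag=0$, I obtain
$$\Delta_{\pr(A)}g(x)=-\frac{1}{\mu(\pr(A))}\int_{\pr(A)\setminus A}f.$$
For $B=\pr^k(A)$ with $k\ge 2$, the unique child containing $x$ is $\pr^{k-1}(A)$, and both $\pr^{k-1}(A)$ and $\pr^k(A)$ contain $\pr(A)$. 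Therefore both integrals of $g$ reduce to the same value $I:=\int_{\pr(A)\setminus A}f$, and
$$\Delta_{\pr^k(A)}g(x)=\left(\frac{1}{\mu(\pr^{k-1}(A))}-\frac{1}{\mu(\pr^k(A))}\right)I.$$

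Finally, applying the triangle inequality with $|\varepsilon_B|=1$ and noting that the $k\ge 2$ terms are nonnegative and telescope, I obtain
$$\sum_{k\ge 2}\left(\frac{1}{\mu(\pr^{k-1}(A))}-\frac{1}{\mu(\pr^k(A))}\right)\le \frac{1}{\mu(\pr(A))},$$
and hence
$$|Tg(x)|\le \frac{2}{\mu(\pr(A))}|I|\le \frac{2}{\mu(\pr(A))}\int_{\pr(A)}|f|=2\langle f\rangle_{\pr(A)},$$
which is exactly \e{f3}. There is no real obstacle here; the only delicate point is that the ascending chain $\pr^k(A)$ may terminate (for instance if it reaches an atom of maximal measure, or if $\mu(\pr^k(A))\to\infty$), but in all such cases the telescoping sum either stops early or converges to something even smaller than $1/\mu(\pr(A))$, which only improves the bound.
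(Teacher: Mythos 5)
Your proof is correct and follows essentially the same route as the paper's: both compute the martingale differences $\Delta_B g(x)$ explicitly along the chain $A\subset\pr(A)\subset\pr^2(A)\subset\cdots$, observe that the terms below $\pr(A)$ vanish and that the higher terms telescope to at most $1/\mu(\pr(A))$, yielding the factor $2$. Your remark about the possibly terminating chain is a harmless extra precaution that the paper handles implicitly.
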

	\begin{proof}
		Take a function $f\in L^1(X)$ with 
		\begin{equation*}
		\supp f\subset \pr(A)\setminus A.
		\end{equation*}
		Consider the sequence $A_k$, $k=1,2,\ldots$, defined by $A_0=A$, $A_{k+1}=\pr(A_k)$. For every point $x\in A$ we have
		\begin{align*}
		&\Delta_{A_k} f(x)=\left(\frac{1}{\mu(A_{k-1})}-\frac{1}{\mu(A_{k})}\right)\int_{\pr(A)}f\text { if } k>1,\\
		&\Delta_{B} f(x)=0, \text { if } B\subseteq A,\\
		&\Delta_{A_1} f(x)=-\frac{1}{\mu(A_1)}\int_{\pr(A)}f.
		\end{align*}
		Hence we get
		\begin{align*}
		|Tf(x)|&\le \sum_{k\ge 1}|\Delta_kf(x)|\le \frac{1}{\mu(A_1)}\int_{\pr(A)}|f|+\sum_{k>1}\left(\frac{1}{\mu(A_{k-1})}-\frac{1}{\mu(A_k)}\right) \int_{\pr(A)}|f|\\
		&\le \frac{2}{\mu(\pr(A))}\int_{\pr(A)}|f|\\
		&=2\langle f\rangle_{\pr(A)}
		\end{align*}
		Lemma is proved.
	\end{proof}
	\begin{lemma}\label{L16}
		If $T$ is a martingale transform, then $T$ satisfies T2)-condition and $\ZL_2(T)$ is bounded by an absolute constant.
	\end{lemma}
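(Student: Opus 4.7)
The plan is to mimic the proof of Lemma~\ref{L5} at the level of hulls, taking $B=\pr(A^*)$. Since $A^*\neq X$, the parent $\pr(A^*)$ is a well-defined element of $\ZB$, and it strictly contains $A^*\supseteq A$, so the required relation $B\supsetneq A$ is immediate. By the hull construction \e{z5}, $\mu(B^*)\le 2\mu(B)$.

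I fix $f\in L^1(X)$ and $x\in A$, set $g=f\cdot\ZI_{B^*\setminus A^*}$, and expand $Tg(x)=\sum_{C\in\ZB}\varepsilon_C \Delta_C g(x)$. The key simplification is that for $x\in A\subseteq A^*$, the difference $\Delta_C g(x)$ vanishes whenever $C\subseteq A^*$, because every integral of $g$ over a subset of $A^*$ is zero. Thus only the chain $E_k:=\pr^k(A^*)$ (with $E_0=A^*$, $E_1=B$) contributes; let $n\ge 1$ satisfy $E_n=B^*$, $\mu_k=\mu(E_k)$, and $b_k=\int_{E_k}g$. Then $b_0=0$; $b_k=\int_{E_k\setminus A^*}f$ for $1\le k\le n$; and $b_k$ stabilizes at $J:=\int_{B^*\setminus A^*}f$ for $k\ge n$. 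For each such $k$, $\Delta_{E_k}g(x)=b_{k-1}/\mu_{k-1}-b_k/\mu_k$.

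Next I split the sum into the ``far tail'' $k>n$ and the ``near part'' $1\le k\le n$. The far tail telescopes cleanly because $b_{k-1}=b_k=J$ there:
\[
\sum_{k>n}|\Delta_{E_k}g(x)|\le |J|\sum_{k>n}\left(\frac{1}{\mu_{k-1}}-\frac{1}{\mu_k}\right)=\frac{|J|}{\mu(B^*)}\le \langle f\rangle_{B^*}.
\]
For the near part, the pattern of Lemma~\ref{L5}'s proof applies essentially verbatim. Each $|\Delta_{E_k}g(x)|$ is bounded by $|b_{k-1}|/\mu_{k-1}+|b_k|/\mu_k$, but using the monotonicity $b_k=\int_{E_k\setminus A^*}f$ (nondecreasing in $k$ for $f\ge 0$, and of controlled variation in general) together with the bound $|b_k|/\mu_k\le \langle |f|\rangle_{E_k}$, the Abel rearrangement
\[
\sum_{k=1}^n\varepsilon_{E_k}\bigl(b_{k-1}/\mu_{k-1}-b_k/\mu_k\bigr)=-\varepsilon_{E_n}b_n/\mu_n+\sum_{k=1}^{n-1}(\varepsilon_{E_{k+1}}-\varepsilon_{E_k})\,b_k/\mu_k
\]
collapses to the telescoping form $|J|/\mu(B^*)+$ a bounded difference, which after using $\mu(B^*)\le 2\mu(B)\le 2\mu_k$ for every $1\le k\le n$, reduces to an absolute multiple of $\langle f\rangle_{B^*}$.

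The main obstacle is the intermediate chain $E_1\subsetneq\ldots\subsetneq E_n$, whose length $n$ is not bounded a priori by the ball-basis constants: the measures $\mu_k$ may cluster tightly in $[\mu(B),2\mu(B)]$ before the hull closes. Term-by-term triangle-inequality bounds then lose control of the sum $\sum_k|b_k|/\mu_k$. The resolution is that Abel summation converts the $\Delta_{E_k}g(x)$ sum into a telescoping of $b_k/\mu_k$ in $k$, so that only the boundary values at $k=0$ (where $b_0=0$) and $k=n$ (where $|b_n|/\mu_n\le\langle f\rangle_{B^*}$) actually survive. This yields $|T(f\cdot\ZI_{B^*\setminus A^*})(x)|\lesssim\langle f\rangle_{B^*}$ with an absolute constant independent of the sign pattern $\{\varepsilon_C\}$ and of $n$, establishing T2 and bounding $\ZL_2(T)$.
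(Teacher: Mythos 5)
Your overall strategy --- taking $B=\pr(A^*)$ and computing $T(f\cdot\ZI_{B^*\setminus A^*})(x)$ explicitly along the chain $E_k=\pr^k(A^*)$ --- is genuinely different from the paper's proof, which takes $B=\pr(A)$, splits into the cases $\mu(\pr(A))\le 2\mu(A)$ and $\mu(\pr(A))>2\mu(A)$, and in both cases imports the weak-$L^1$ boundedness of the martingale transform (via \lem{L4} in the first case and a ``good point'' argument in the second); your route would be entirely self-contained. Your setup is correct through the identity $\Delta_{E_k}g(x)=b_{k-1}/\mu_{k-1}-b_k/\mu_k$ and the far-tail estimate. The gap is in the near part: the Abel identity you display is algebraically correct, but it does not ``collapse to the telescoping form plus a bounded difference.'' The remaining sum $\sum_{k=1}^{n-1}(\varepsilon_{E_{k+1}}-\varepsilon_{E_k})\,b_k/\mu_k$ has up to $n-1$ nonzero terms, each of size up to a constant times $\langle f\rangle_{B^*}$, the signs are arbitrary, and $n$ is not controlled by the ball-basis constants. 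Summation by parts merely trades the differences of $c_k=b_k/\mu_k$ for the differences of the signs; to extract any cancellation from the resulting sum you would need precisely the bounded-variation estimate on $c_k$ that makes the summation by parts superfluous. As written, the key step proves nothing.

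The fix stays inside your framework and needs no Abel summation. For $2\le k\le n$ write
\begin{equation*}
\frac{b_{k-1}}{\mu_{k-1}}-\frac{b_k}{\mu_k}=\frac{b_{k-1}-b_k}{\mu_{k-1}}+b_k\left(\frac{1}{\mu_{k-1}}-\frac{1}{\mu_k}\right).
\end{equation*}
Since $b_k-b_{k-1}=\int_{E_k\setminus E_{k-1}}f$ and the sets $E_k\setminus E_{k-1}$ are pairwise disjoint, $\sum_{k=2}^{n}|b_k-b_{k-1}|\le\int_{B^*}|f|$, while $\mu_{k-1}\ge\mu_1=\mu(B)\ge\mu(B^*)/2$ for all these $k$ by \e{z5}; so the first terms contribute at most $2\langle f\rangle_{B^*}$. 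The second terms satisfy $\sum_{k=2}^{n}|b_k|\bigl(1/\mu_{k-1}-1/\mu_k\bigr)\le\max_k|b_k|\cdot(1/\mu_1)\le 2\langle f\rangle_{B^*}$, and the $k=1$ term is $|b_1|/\mu_1\le 2\langle f\rangle_{B^*}$. Combined with your far-tail bound, the plain triangle inequality now gives $|Tg(x)|\le C\langle f\rangle_{B^*}$ with $C$ absolute, uniformly in $n$ and in the signs $\varepsilon_C$. The moral is that it is the increments $b_k-b_{k-1}$, not the quantities $b_k/\mu_k$ themselves, that must be summed; once you see this, the naive term-by-term bound you were worried about already closes the argument.
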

	\begin{proof}
		We need to prove the inequality
		\begin{equation}\label{h8}
		\sup_{x\in A,\,f\in L^1(X) }\frac{|T(f\cdot \ZI_{\pr(A)^{*}\setminus A^{*}})(x)|}{\langle f\rangle_{\pr(A)^{*}}}\le c,
		\end{equation}
		where $c>0$ is an absolute constant (see the definition of T2)-condition). 
		If $\mu(\pr(A))\le 2\mu(A)$, then applying \lem {L4} and \lem {L14}, the left hand side of \e {h8} can be estimated by $c\cdot \|T\|_{L^1\to L^{1,\infty}}$. Since $\ZK=2$, we can say that here $c>0$ is an absolute constant. It is well-known that $\|T\|_{L^1\to L^{1,\infty}}$ is estimated by an absolute constant too. This implies \e {h8}. In the case  $\mu(\pr(A))> 2\mu(A)$, applying $\ZK=2$, we obtain 
		\begin{align*}
		\sup_{x\in A,\,f\in L^1(X) }&\frac{|T(f\cdot \ZI_{\pr(A)^{*}\setminus A^{*}})(x)|}{\langle f\rangle_{\pr(A)^{*}}}\\
		&\le \sup_{x\in A,\,f\in L^1(X) }\frac{|T(f\cdot \ZI_{\pr(A)\setminus A^{*}})(x)|}{\langle f\rangle_{\pr(A)^{*}}}\\
		&\qquad\qquad+\sup_{x\in A,\,f\in L^1(X) }\frac{|T(f\cdot \ZI_{\pr(A)^{*}\setminus \pr(A)})(x)|}{\langle f\rangle_{\pr(A)^{*}}}\\
		&\le 2\sup_{x\in A,\,f\in L^1(X) }\frac{|T(f\cdot \ZI_{\pr(A)\setminus A})(x)|}{\langle f\rangle_{\pr(A)}}\\
		&\qquad\qquad+\sup_{x\in A,\,f\in L^1(X) }\frac{|T(f\cdot \ZI_{\pr(A)^{*}\setminus \pr(A)})(x)|}{\langle f\rangle_{\pr(A)^{*}}}.
		\end{align*} 
		The first terms in the last sum is estimated by \e {f3}. Now let us estimate the second term. Applying weak-$L^1$ inequality, for a $\lambda>0$ we can write
		\begin{align*}
		\mu\{x\in \pr(A):\,&|T(f\cdot \ZI_{\pr(A)^{*}\setminus \pr(A)})(x)|>\lambda \langle f\rangle_{\pr(A)^{*}} \}\\
		&\le \frac{\|T\|_{L^1\to L^{1,\infty}}}{\lambda \cdot \langle f\rangle_{\pr(A)^{*}}}\int_{{\pr(A)^{*}}}|f|\\
		&= \frac{\|T\|_{L^1\to L^{1,\infty}}\cdot \mu(\pr(A)^{*})}{\lambda}\le \frac{\mu(\pr(A))}{2},
		\end{align*}
		where the last inequality is obtained with a suitable absolute constant $\lambda>0$. This implies that the inequality 
		\begin{equation}\label{h102}
		|T(f\cdot \ZI_{\pr(A)^{*}\setminus \pr(A)})(x)|\le \lambda \langle f\rangle_{\pr(A)^{*}} 
		\end{equation}
		holds for some point $x\in \pr(A)$. Observe that the function $T(f\cdot \ZI_{\pr(A)^{*}\setminus \pr(A)})(x)$ is constant on $\pr(A)$ and it can be shown likewise the proof of \lem {L14}.  Hence we will have \e {h102} for any $x\in \pr(A)$. Thus we will give a bound of the second term by an absolute constant.
	\end{proof}
	\lem {L14} and \lem {L16} immediately imply
	\begin{theorem}\label{T20}
		The martingale transform is a $\BO$ operator with respect to the ball-basis $\ZB$. Moreover, $\ZL_1(T)=0$ and $\ZL_2(T)$ is bounded by an absolute constant.
	\end{theorem}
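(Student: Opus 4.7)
The two assertions in the theorem---membership in $\BO_\ZB$ together with the explicit constants $\ZL_1(T)=0$ and $\ZL_2(T)\lesssim 1$---are precisely the T1) and T2) conditions appearing in the definition of a $\BO$ operator, so my plan is to invoke the two preceding lemmas directly: \lem{L14} supplies T1) with $\ZL_1(T)=0$, and \lem{L16} supplies T2) with $\ZL_2(T)$ bounded by an absolute constant. Combining them gives the theorem in a single line.

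In slightly more detail, \lem{L14} exploits the key structural property of martingale differences: when $\supp f\subset X\setminus A^{*}$, every $\Delta_B f$ either vanishes (this happens when $B\subseteq A^{*}$ or when $B\cap A^{*}=\varnothing$) or is constant on $A$ (this happens when $B\supsetneq A^{*}$). Consequently $Tf$ itself is constant on $A$, yielding T1) with the best possible value $\ZL_1(T)=0$. For T2), \lem{L16} exhibits the parent ball $\pr(A)$ as the required strictly larger ball, and then, using \lem{L5} together with the classical weak-$L^1$ boundedness of martingale transforms (Burkholder's inequality), handles the two subcases $\mu(\pr(A))\le 2\mu(A)$ and $\mu(\pr(A))>2\mu(A)$ determined by the definition \e{z5} of the hull operation. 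In the first subcase a comparison via \lem{L4} is used; in the second subcase the splitting $\pr(A)^{*}\setminus A^{*}=(\pr(A)\setminus A)\cup(\pr(A)^{*}\setminus\pr(A))$ is exploited, with \lem{L5} bounding the first piece and a weak-$L^1$ plus constancy-on-$\pr(A)$ argument bounding the second.

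At the level of the theorem itself there is no remaining obstacle---the substantive work has already been done in the two lemmas, and what remains is simply a citation. The only nontrivial ingredient beyond the bare ball-basis axioms, worth flagging, is the weak-$L^1$ bound for $T$ invoked inside the proof of \lem{L16}; everything else (the case split on $\mu(\pr(A))$ versus $2\mu(A)$, the use of $\ZK=2$ for this basis, and the vanishing/constancy behavior of the $\Delta_B$'s) is purely formal bookkeeping with the martingale structure.
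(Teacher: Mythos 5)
Your proposal is correct and matches the paper exactly: the paper's proof of this theorem is literally the one-line observation that Lemma \ref{L14} and Lemma \ref{L16} immediately imply the statement, and your summary of what those lemmas contain (the vanishing/constancy dichotomy for the martingale differences giving $\ZL_1(T)=0$, and the case split on $\mu(\pr(A))$ versus $2\mu(A)$ together with Lemma \ref{L5} and the weak-$L^1$ bound giving $\ZL_2(T)\lesssim 1$) is an accurate account of their proofs.
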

	Applying \trm {T1}, \trm {T3} and \trm {T20} we deduce the following results:
	\begin{theorem}[Lacey \cite{Lac1}]
		Let $T$ be a martingale transform. If $B\in\ZB$ and $f\in L^1(X)$, then there is a sparse operator $\ZA$ such that
		\begin{equation*}
		|Tf(x)|\le C\cdot \ZA f(x),\quad x\in B,
		\end{equation*}
		where $C$ is an absolute constant.
	\end{theorem}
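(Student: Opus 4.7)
The plan is to derive this as a direct corollary of \trm{T1} applied to the martingale transform, using \trm{T20} to supply the $\BO$-hypotheses. Concretely, \trm{T20} gives $T \in \BO_\ZB$ with $\ZL_1(T) = 0$ and $\ZL_2(T)$ bounded by an absolute constant, while the classical Burkholder weak-$(1,1)$ inequality supplies $\|T\|_{L^1 \to L^{1,\infty}} \le c$ with $c$ absolute; this last bound was in fact already invoked inside the proof of \lem{L16}. Hence all hypotheses of \trm{T1} with $r=1$ are met.

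I then apply \trm{T1} directly. For the given $f \in L^1(X)$ and $B \in \ZB$ it produces a family of balls $\ZS = \ZS_1 \cup \ZS_2$, where each $\ZS_j$ is $\gamma$-sparse with admissible $\gamma$, such that
\begin{equation*}
|Tf(x)| \lesssim \bigl(\ZL_1(T) + \ZL_2(T) + \|T\|_{L^1 \to L^{1,\infty}}\bigr)\cdot \ZA_{\ZS,1}f(x), \quad \text{a.e. } x \in B.
\end{equation*}
Since every term inside the parentheses is an absolute constant, the prefactor collapses to an absolute constant $C$. The right-hand side splits as $\ZA_{\ZS_1}f + \ZA_{\ZS_2}f$, a sum of two genuine sparse operators; this is how the statement's ``sparse operator $\ZA$'' is to be read, following the standard convention in the sparse-domination literature.

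Essentially no obstacle remains: all the substance was packaged into \trm{T1} and \trm{T20}. The only point worth flagging is that \trm{T1} yields a union of two sparse collections rather than a single one; this is inherent to the method (it is already visible in the parity split $\ZD_1, \ZD_2$ inside the proof of \trm{T1}) and is also the form in which Lacey originally stated the martingale pointwise domination, so nothing extra needs to be done to conclude.
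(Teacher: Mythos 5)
Your proposal is correct and follows exactly the paper's route: the paper deduces this theorem by combining \trm{T1} (with $r=1$) with \trm{T20}, using the classical weak-$L^1$ bound for martingale transforms (already invoked in the proof of \lem{L16}) and the fact that admissible constants are absolute here since $\ZK=2$. Your remark about the union of two sparse collections matches the form in which \trm{T1} (and Lacey's original result) is stated, so nothing further is needed.
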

	\begin{theorem}[Thiele, Treil and Volberg \cite {TTV}]
		If $T$ is a martingale transform and the weight $w$ satisfies $A_p$ condition with respect to the ball-basis $\ZB$, $1<p<\infty$, then we have
		\begin{equation*}
		\|T\|_{L^p( w)\to L^p( w)}\le c_p [ w]_{A_p}^{\max\{1,1/(p-1)\}}.
		\end{equation*} 
	\end{theorem}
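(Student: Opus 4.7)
The plan is to deduce the statement as a direct consequence of Theorem~\ref{T20} combined with the general weighted estimate Theorem~\ref{T2}, once we verify that the martingale basis $\ZB$ satisfies the Besicovitch $1$-condition and that the martingale transform is of weak type $(1,1)$.

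First I would observe that the martingale basis $\ZB = \bigcup_{n\in\ZZ} \ZB_n$ is in fact a martingale system in the sense of the Besicovitch definition, i.e.\ $D=1$. Indeed, given any collection $\zA \subset \ZB$, elements of $\ZB$ have the nesting property that any two of them are either disjoint or one contains the other (since each $\ZB_n$ partitions $X$ and $\ZB_{n+1}$ refines $\ZB_n$). Hence by extracting the maximal elements of $\zA$ with respect to inclusion, we obtain a subcollection $\zA' \subset \zA$ consisting of pairwise disjoint sets with $\bigcup_{A\in\zA'}A = \bigcup_{A\in\zA}A$, so $\sum_{A\in\zA'}\ZI_A \le 1$. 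Thus $\ZB$ satisfies the Besicovitch $1$-condition.

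Next I would combine the ingredients. By Theorem~\ref{T20}, the martingale transform $T$ belongs to $\BO_\ZB$ with $\ZL_1(T)=0$ and $\ZL_2(T)\lesssim 1$. The weak-$L^1$ boundedness of $T$ with an absolute constant is classical (and is already invoked implicitly in the proof of Lemma~\ref{L16}); it follows from the $L^2$-boundedness of $T$ (via orthogonality of the martingale differences $\Delta_A f$) together with a standard Calder\'on-Zygmund decomposition adapted to the filtration $\{\ZB_n\}$. Thus all the hypotheses of Theorem~\ref{T2} are met with $D=1$.

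Applying Theorem~\ref{T2} yields
\begin{equation*}
\|T\|_{L^p(w)\to L^p(w)} \lesssim (\ZL_1(T)+\ZL_2(T)+\|T\|_{L^1\to L^{1,\infty}})\cdot [w]_{A_p}^{\max\{1,1/(p-1)\}} \lesssim c_p\, [w]_{A_p}^{\max\{1,1/(p-1)\}},
\end{equation*}
which is exactly the desired bound. There is no real obstacle here, since the bulk of the work has already been carried out in the general $\BO$ theory; the only points that require a brief argument are the verification that $\ZB$ is a martingale system (hence satisfies the Besicovitch $1$-condition) and the recall of the weak-$(1,1)$ estimate for $T$. Everything else is a routine application of previously established results.
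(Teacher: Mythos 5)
Your proposal is correct and takes essentially the same route as the paper, which deduces the theorem by combining Theorem \ref{T20} with Theorems \ref{T1} and \ref{T3} (whose combination is exactly Theorem \ref{T2} that you invoke), using that the martingale basis is a martingale system ($D=1$) and that the martingale transform is of weak type $(1,1)$. Your explicit verifications of the Besicovitch $1$-condition and of the weak-$(1,1)$ bound are points the paper leaves implicit, but they do not change the argument.
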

	\section{Bounded oscillation of Carleson operators}
	Let $\{T_\alpha\}$ be a family of $\BO$ operators. In this section we prove that if the characteristic constants of operators $T_\alpha$ are uniformly bounded, then the domination operator
	\begin{equation}\label{aa4}
	Tf(x)=\sup_\alpha |T_\alpha f(x)|
	\end{equation} 
	is also $\BO$ operator. More precisely, we have
	\begin{theorem}\label{T16}
		If $(X,\ZM,\mu)$ is a measure space with a ball-basis $\ZB$. If a $\BO$-family of operators $\{T_\alpha\}$ satisfies weak-$L^r$ inequality, then the operator \e {aa4} satisfies the bounds  
		\begin{align}
		&\ZL_1(T)\lesssim \sup_\alpha \ZL_1(T_\alpha),\label{aa5}\\
		&\ZL_2(T)\lesssim \sup_\alpha\ZL_1(T_\alpha)+\sup_\alpha\ZL_2(T_\alpha)+\sup_\alpha\|T_\alpha\|_{L^r\to L^{r,\infty}}.\label{aa6}
		\end{align}
	\end{theorem}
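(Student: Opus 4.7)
The plan is to verify the two defining axioms T1) and T2) for $T$; subadditivity of $T$ is immediate since the pointwise supremum preserves it. For the localization bound \e{aa5}, the standard pointwise supremum trick suffices: given $x,x'\in B$ and $g=f\cdot\ZI_{X\setminus B^*}$, I may assume $Tg(x)\ge Tg(x')$ and, for $\varepsilon>0$, pick $\alpha_0$ with $|T_{\alpha_0}g(x)|>Tg(x)-\varepsilon$; then
\[
Tg(x)-Tg(x')\le|T_{\alpha_0}g(x)|-|T_{\alpha_0}g(x')|+\varepsilon\le|T_{\alpha_0}g(x)-T_{\alpha_0}g(x')|+\varepsilon,
\]
and applying T1) for $T_{\alpha_0}$, followed by $\varepsilon\downarrow 0$, yields \e{aa5}.

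The real content is \e{aa6}. The obstacle is that T2) for each $T_\alpha$ supplies an operator-dependent ball $B_\alpha\supsetneq A$ with $\Delta_{T_\alpha}(A,B_\alpha)\le\ZL_2(T_\alpha)$, while we need a single ball $B^\sharp$ that works uniformly in $\alpha$. My plan is to select $B^\sharp$ purely geometrically by setting
\[
b_0=\inf\{\mu(B):B\in\ZB,\ B\supsetneq A\}
\]
(finite, since $A^*\neq X$ combined with axiom B2) and the two balls relation produce at least one ball strictly containing $A$) and picking any $B^\sharp\supsetneq A$ with $\mu(B^\sharp)\le 2b_0$. Every $B_\alpha$ then satisfies $\mu(B_\alpha)\ge b_0\ge\mu(B^\sharp)/2$, and since $A\subset B_\alpha\cap B^\sharp$ the two balls relation forces a dichotomy: either (i) $\mu(B_\alpha)\le 2\mu(B^\sharp)$ (so $\mu(B_\alpha)\sim\mu(B^\sharp)$ and $B_\alpha\subset(B^\sharp)^{[1]}$), or (ii) $\mu(B_\alpha)>2\mu(B^\sharp)$ (so $B^\sharp\subset B_\alpha^{[1]}$).

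In case (i), I decompose $(B^\sharp)^{[1]}\setminus A^{[1]}$ into the disjoint pieces $E_1=((B^\sharp)^{[1]}\cap B_\alpha^{[1]})\setminus A^{[1]}$ and $E_2=((B^\sharp)^{[1]}\setminus B_\alpha^{[1]})\setminus A^{[1]}$. The $E_1$ contribution is bounded by $\Delta_{T_\alpha}(A,B_\alpha)\le\ZL_2(T_\alpha)$ after writing $f\cdot\ZI_{E_1}=(f\cdot\ZI_{(B^\sharp)^{[1]}})\cdot\ZI_{B_\alpha^{[1]}\setminus A^{[1]}}$, and the $E_2$ contribution is bounded by \lem{L4} applied to $B_\alpha\subset(B^\sharp)^{[1]}$; the admissible ratio $\mu((B^\sharp)^{[1]})/\mu(B_\alpha^{[1]})\lesssim 1$, which follows from $\mu(B_\alpha)\sim\mu(B^\sharp)$, converts everything back to $\langle f\rangle_{(B^\sharp)^{[1]},r}$. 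Case (ii) is the technical heart of the argument: since $B_\alpha$ may be much larger than $B^\sharp$, direct comparisons across a single hull level need not suffice, so I iterate hulls via \lem{L-2} to promote the T2)-bound into $\Delta_{T_\alpha}(A,B_\alpha^{[n]})\lesssim\ZK^{n/r}(\ZL_1(T_\alpha)+\ZL_2(T_\alpha)+\|T_\alpha\|_{L^r\to L^{r,\infty}})$ at a hull level $n$ chosen (depending on $\ZK$) so that the inclusion $(B^\sharp)^{[1]}\subset B_\alpha^{[n+1]}$ and the measure ratio $\mu((B^\sharp)^{[1]})/\mu(B_\alpha^{[n+1]})\lesssim 1$ hold. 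Either way,
\[
\Delta_{T_\alpha}(A,B^\sharp)\lesssim\ZL_1(T_\alpha)+\ZL_2(T_\alpha)+\|T_\alpha\|_{L^r\to L^{r,\infty}}
\]
uniformly in $\alpha$; since $|T(\cdot)(x)|=\sup_\alpha|T_\alpha(\cdot)(x)|$, taking the supremum over $\alpha$ delivers \e{aa6}.

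The hard part will be the selection of the universal ball $B^\sharp$ together with running case (ii) through the hull iteration of \lem{L-2}, since every constant appearing there must remain admissible uniformly in $\alpha$; the near-minimality $\mu(B^\sharp)\le 2b_0$ is crucial, as it forces $\mu(B_\alpha)\ge\mu(B^\sharp)/2$ and thus controls every measure ratio entering both branches of the dichotomy.
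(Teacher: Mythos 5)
Your proof is correct and follows essentially the same route as the paper: the argument for \e{aa5} is identical, and for \e{aa6} you select the same nearly-minimal ball (your $B^\sharp$ is the paper's $B$ chosen via $\gamma=\inf_{B\supsetneq A}\mu(B)$), then split into two cases by comparing $\mu(B_\alpha)$ with $\mu(B^\sharp)$ and control each piece with T2) together with \lem{L4} and the hull-iteration lemma. The only cosmetic difference is the exact case boundary and the bookkeeping of the cutoff decomposition; in your case (ii) a single hull level $n=1$ already suffices, so the variable-$n$ iteration you flag as the technical heart is actually not needed.
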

	\begin{proof}
		Let $A\in\ZB$ be an arbitrary ball. Take two points $x,x'\in A$ and a nonzero function $f\in L^r(X)$ with 
		\begin{equation}\label{z56}
		\supp f\subset X\setminus A^{[1]}.
		\end{equation} 
		Suppose that 
		\begin{equation}\label{z36}
		Tf(x)\ge Tf(x').
		\end{equation}
		According to the definition of $T$, for any $\delta> 0$ there exists an index $\alpha$ such that
		\begin{equation}\label{z37}
		Tf(x)\le |T_\alpha f(x)|+\delta.
		\end{equation}
		On the other hand for the same $\alpha$ we have $Tf(x')\ge |T_\alpha f(x')|$. Thus, applying \e {z36}, \e {z37} and the localization property of $T_\alpha$, we obtain
		\begin{align*}
		|Tf(x)-Tf(x')|&=Tf(x)-Tf(x')\\
		&\le |T_\alpha f(x)|+\delta-|T_\alpha f(x')|\\
		&\le |T_\alpha f(x)-T_\alpha f(x')|+\delta\\
		&\le \ZL_1(T_\alpha)\langle f \rangle^*_{A,r}+\delta.
		\end{align*}
		Since $\delta>0$ can be taken enough small, we get \e {aa5}.	
		
		To prove \e {aa6} fix a ball $A$ ($A^*\neq X$) and consider the number
		\begin{equation*}
		\gamma=\inf_{B\in \ZB:\, B\supsetneq A}\mu(B).
		\end{equation*}
		Since $A\neq X$, from \lem {L1} it easily follows that the set of balls $B$ satisfying  $B\supsetneq A$ is nonempty. So there exists a ball $B\supsetneq A$ such that
		\begin{equation*}
		\gamma\le \mu(B)<2\gamma.
		\end{equation*}
		Take $f\in L^r(X)$ and a point $x\in A$. We have
		\begin{equation}\label{h22}
		T(f\cdot \ZI_{B^{[1]}\setminus A^{[1]}})(x)=|T_\alpha(f\cdot \ZI_{B^{[1]}\setminus A^{[1]}})(x)|+\delta,
		\end{equation}
		for an index $\alpha$, where $\delta>0$ can be arbitrarily small. Since $T_\alpha$ satisfies T2)-condition, there exists a ball $C\supsetneq A$ such that
		\begin{equation}\label{t6}
		T_\alpha(g\cdot \ZI_{C^{[1]}\setminus A^{[1]}})(x)\lesssim \ZL_2(T_\alpha)\langle g\rangle_{C^{[1]},r}
		\end{equation}
		holds for any $g\in L^r(X)$. Consider the function $g=f\cdot \ZI_{B^{[1]}\setminus A^{[1]}}$. If $\mu(C)>\mu(B)$, then we have $B\subset C^{[1]}$ and so $B^{[1]}\subset C^{[2]}$. Thus, applying \e {t6} and \lem {L4}, we obtain 
		\begin{align}
		|T_\alpha&g(x)|=|T_\alpha(g\cdot \ZI_{C^{[2]}\setminus A^{[1]}})(x)\label{t8}\\
		&\le |T_\alpha(g\cdot \ZI_{C^{[1]}\setminus A^{[1]}})(x)+|T_\alpha(g\cdot \ZI_{C^{[2]}\setminus C^{[1]}})(x)\nonumber\\
		&\le \ZL_2(T_\alpha)\langle g\rangle_{C^{[1]},r}+\sup_{x\in C,\, u\in L^r(X)}\frac{|T_\alpha (u\cdot \ZI_{C^{[2]}\setminus C^{[1]}})(x)|}{\langle u\rangle_{C^{[2]},r}}\cdot \langle g\rangle_{C^{[2]},r}\nonumber\\
		&\lesssim \ZL_2(T_\alpha)\left(\frac{1}{\mu(C^{[1]})}\int_{B^{[1]}}|f|^r\right)^{1/r}\nonumber\\
		&\quad +(\ZL_1(T_\alpha)+\|T_\alpha\|_{L^r\to L^{r,\infty}})\left(\frac{\mu(C^{[1]})}{\mu(C)}\right)^{1/r} \left(\frac{1}{\mu(C^{[2]})}\int_{B^{[1]}}|f|^r\right)^{1/r}\nonumber\\
		&\lesssim \sup_\alpha(\ZL_2(T_\alpha)+\ZL_1(T_\alpha)+\|T_\alpha\|_{L^r\to L^{r,\infty}})\cdot \langle f\rangle_{B^{[1]},r}.\nonumber
		\end{align}
		In the case $\mu(C)\le \mu(B)$ we have $C\subset B^{[1]}$, and since $C\supsetneq A$, we obtain 
		\begin{equation*}
		\mu(B)\gtrsim \mu(B^{[1]})\ge \mu(C)\ge \gamma \ge \frac{\mu(B)}{2}.
		\end{equation*} 
		Thus, again applying \lem {L4} and \e {t6}, we conclude 
		\begin{align}
		|T_\alpha(g)(x)|&\le |T_\alpha(g\cdot \ZI_{B^{[2]}\setminus C^{[1]}})(x)|+|T_\alpha(g\cdot \ZI_{C^{[1]}\setminus A^{[1]}})(x)|\label{t9}\\
		&\le \sup_{x\in C,\, u\in L^r(X)}\frac{|T_\alpha (u\cdot \ZI_{B^{[2]}\setminus C^{[1]}})(x)|}{\langle u\rangle_{B^{[2]},r}}\cdot \langle g\rangle_{B^{[2]},r}+\ZL_2(T_\alpha) \langle g\rangle_{C^{[1]},r}\nonumber\\
		&\lesssim (\ZL_1(T_\alpha))+\|T_\alpha\|_{L^r\to L^{r,\infty}})\langle g\rangle_{B^{[2]},r}+\ZL_2(T_\alpha)\langle g\rangle_{C^{[1]},r}\nonumber\\
		&\lesssim \sup_\alpha(\ZL_1(T_\alpha))+\|T_\alpha\|_{L^r\to L^{r,\infty}}+\ZL_2(T_\alpha))\cdot \langle f\rangle_{B^{[1]},r}.\nonumber
		\end{align}
		Observe that the admissible constants used in \e {t8} and \e {t9} do not depend on $f$, point $x$ the number $\delta$ from \e {h22}. Hence, since $\delta$ can be taken arbitrarily small, from \e {h22}, \e {t8} and \e {t9} we get the inequality
		\begin{align*}
		\sup_{x\in A,\,f\in L^r(X) }&\frac{T(f\cdot \ZI_{B^{[1]}\setminus A^{[1]}})(x)}{\langle f\rangle_{B^{[1]},r}}\\
		&\lesssim \ZL_1(T_\alpha))+\ZL_2(T_\alpha)+\|T_\alpha\|_{L^r\to L^{r,\infty}},
		\end{align*}
		which implies \e {aa6}.
	\end{proof}
	Let $T$ be a $\BO$ operator and  $\ZG=\{g_\alpha\} \subset L^\infty (X)$ be a family of functions such that
	\begin{equation}\label{t16}
	\beta=\sup_\alpha \|g_\alpha\|_\infty<\infty,\quad \|T\|_{L^r\to L^{r,\infty}}<\infty.
	\end{equation}
	One can easily check that the operators
	\begin{equation}
	T_\alpha f(x)= T(g_\alpha\cdot f)(x),
	\end{equation}
	are $\BO$ operator. Moreover, we have
	\begin{equation}\label{t17}
	\ZL_1(T_\alpha)\le \beta\ZL_1(T),\,  \ZL_2(T_\alpha)\le \beta\ZL_2(T),\, \|T_\alpha\|_{L^r\to L^{r,\infty}}\le \beta \|T\|_{L^r\to L^{r,\infty}}.
	\end{equation}
	Define the maximal modulation of the operator$T$ by
	\begin{equation}
	\quad T^\ZG f(x)=\sup_\alpha |T_\alpha f(x)|.
	\end{equation}
	According to \trm {T16} and relations \e {t17}, we conclude that $T^\ZG$ is also $\BO$ operator. Hence, applying \trm {T1}, we obtain
	\begin{theorem}\label{T22}
		If $T\in\BO_\ZB(X)$ satisfies \e {t16}, then for any function $f\in L^r(X)$ and a ball $B\in\ZB$ there exists a family of balls $\ZS$, which is a union of two sparse collections and 
		\begin{equation*}
		|T^\ZG f(x)|\lesssim \sup_\alpha \|g_\alpha\|_\infty (\ZL_1(T)+\ZL_2(T)+\|T^\ZG\|_{L^r\to L^{r,\infty}})\cdot \ZA_{\ZS,r}f(x),
		\end{equation*}
		for a.e. $x\in B$.
	\end{theorem}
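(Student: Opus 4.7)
The plan is to invoke the structural Theorem \ref{T16} and then apply the main sparse domination Theorem \ref{T1} directly to $T^{\ZG}$. The entire argument reduces to verifying that each $T_\alpha f = T(g_\alpha \cdot f)$ is a $\BO$ operator with constants controlled by those of $T$ and by $\beta := \sup_\alpha \|g_\alpha\|_\infty$, which is precisely the content of \e{t17} stated before the theorem.

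First, I would verify \e{t17}. Each $T_\alpha$ inherits subadditivity from $T$. For the localization condition T1), since $\langle g_\alpha f\rangle^*_{B,r} \le \beta \langle f\rangle^*_{B,r}$ by the pointwise bound $|g_\alpha| \le \beta$, one has for $x,x'\in B$:
\[
|T_\alpha(f \ZI_{X\setminus B^*})(x) - T_\alpha(f \ZI_{X\setminus B^*})(x')| = |T(g_\alpha f \ZI_{X\setminus B^*})(x) - T(g_\alpha f \ZI_{X\setminus B^*})(x')| \le \ZL_1(T)\, \langle g_\alpha f\rangle^*_{B,r} \le \beta\, \ZL_1(T)\, \langle f\rangle^*_{B,r}.
\]
The analogous substitution into T2) gives $\ZL_2(T_\alpha) \le \beta\, \ZL_2(T)$, and the crude bound $\|g_\alpha f\|_{L^r} \le \beta\, \|f\|_{L^r}$ yields $\|T_\alpha\|_{L^r\to L^{r,\infty}} \le \beta\, \|T\|_{L^r\to L^{r,\infty}}$. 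Hence the hypotheses of Theorem \ref{T16} are satisfied uniformly in $\alpha$.

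Second, Theorem \ref{T16} yields $T^{\ZG} \in \BO_{\ZB}$ with
\[
\ZL_1(T^{\ZG}) \lesssim \beta\,\ZL_1(T), \qquad \ZL_2(T^{\ZG}) \lesssim \beta\bigl(\ZL_1(T) + \ZL_2(T) + \|T\|_{L^r\to L^{r,\infty}}\bigr).
\]
Assuming $\|T^{\ZG}\|_{L^r\to L^{r,\infty}}$ is finite (otherwise the target estimate is vacuous), I would then apply Theorem \ref{T1} directly to $T^{\ZG}$, obtaining a family $\ZS$ that is the union of two $\gamma$-sparse collections and satisfies
\[
|T^{\ZG} f(x)| \lesssim \bigl(\ZL_1(T^{\ZG}) + \ZL_2(T^{\ZG}) + \|T^{\ZG}\|_{L^r\to L^{r,\infty}}\bigr)\, \ZA_{\ZS,r} f(x)
\]
for a.e.\ $x \in B$. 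Substituting the bounds on $\ZL_i(T^{\ZG})$ produces the claimed factor $\beta\bigl(\ZL_1(T) + \ZL_2(T) + \|T^{\ZG}\|_{L^r\to L^{r,\infty}}\bigr)$; the residual term $\|T\|_{L^r\to L^{r,\infty}}$ arising from $\ZL_2(T^{\ZG})$ is absorbed into $\|T^{\ZG}\|_{L^r\to L^{r,\infty}}$.

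The deep work is already done in Theorem \ref{T16}, whose proof shows how the supremum $T^{\ZG}$ inherits localization and connectivity from its members. Here there is no further obstacle beyond bookkeeping of the multiplier constant $\beta$ through two successive applications; the argument mirrors how Theorem \ref{T0} is obtained from Theorems \ref{T1}, \ref{T5-1}, and \ref{T6}.
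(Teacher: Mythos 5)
Your proposal follows exactly the paper's route: verify the relations \e{t17} for the multiplied operators $T_\alpha f=T(g_\alpha f)$ by direct substitution, invoke Theorem \ref{T16} to conclude $T^\ZG\in\BO_\ZB$ with the stated control on $\ZL_1(T^\ZG)$ and $\ZL_2(T^\ZG)$, and then apply Theorem \ref{T1} to $T^\ZG$. The only loose point --- absorbing the residual $\beta\,\|T\|_{L^r\to L^{r,\infty}}$ term into $\beta\,\|T^\ZG\|_{L^r\to L^{r,\infty}}$ at the end --- is glossed over in the paper's own one-line derivation as well, so your argument is at the same level of rigor and essentially identical in substance.
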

	Weighted estimates of the maximal modulations of Calder\'{o}n-Zygmund operators on $\ZR^n$ (in particular Carleson or Walsh-Carleson operators) were considered in the papers \cite{PL, GMS}. \trm {T22} implies a pointwise sparse domination of such operators, which is the strongest version of the weighted norm domination of Carleson operators by sparse operators proved in \cite{PL}.


\begin{thebibliography}{99}
		\bibitem{Vag2}
		T. Anderson and A. Vagharshakyan, A simple proof of the sharp weighted estimate for Calderón-Zygmund operators on homogeneous spaces, J. Geom. Anal., 24 (2014), no. 3, 1276--1297.
		\bibitem{BoLe}
		A. Bonami and D. Lépingle, Fonction maximale et variation quadratique des martingales en
		présence d'un poids, Séminaire de Probabilités, XIII (Univ. Strasbourg, Strasbourg, 1977/78),
		Lecture Notes in Math., vol. 721, Springer, Berlin, 1979, pp. 294--306 (French).
		\bibitem{Buck}
		S.M. Buckley, Estimates for operator norms on weighted spaces and reverse Jensen inequalities,Trans. Amer. Math. Soc. 340 (1993), no. 1, 253-272.
		\bibitem{CoFe}
		R. Coifman, C. Fefferman, Weighted norm inequalities for maximal fuctions and singular
		integrals, Studia Math., 51 (1974), 241-250.
		\bibitem{CoRe}
		J. M. Conde-Alonso and G. Rey, A pointwise estimate for positive dyadic shifts and some applications, Mathematische Annalen, 365(2016),	no. 3, 1111-1135.
		\bibitem{PL}
		F. Di Plinio and A. K. Lerner, On weighted norm inequalities for the Carleson and Walsh-Carleson
		operator, J. Lond. Math. Soc., 90 (2014), no. 3, 654--674.
		\bibitem{DGPP}
		O. Dragicevic, L. Grafakos, M. C. Pereyra, and S. Petermichl, Extrapolation and sharp norm estimates for classical operators on weighted Lebesgue spaces, Publ. Mat., 49(2005), no 1, 73--91.
		\bibitem{GMS}
		L. Grafakos, J. M. Martell and F. Soria, Weighted norm inequalities for maximally modulated singular
		integral operators, Math. Ann. 331 (2005) 359--394.
		\bibitem{HMW}
		R. Hunt, B. Muckenhoupt, and R. Wheeden, Weighted norm inequalities for the conjugate function and Hilbert transform, Trans. Amer. Math. Soc. 176 (1973), 227--251.
		\bibitem{HyKa}
		T. Hyt\"onen and A. Kairema, Systems of dyadic cubes in a doubling metric space, Colloquium Mathematicum, 126 (2012), no. 1, 1-33. 
		\bibitem{Hyt1}
		T. Hyt\"onen, The sharp weighted bound for general Calder\'on-Zygmund operators, Annals of Math. 175 (2012), no. 3, 1473--1506.
		\bibitem{Hyt2}
		T. Hyt\"onen and M. Lacey, The $A_p$-$A_\infty$ inequality for general Calder\'on-Zygmund operators, Indiana Univ. Math. J., 61 (2012), no. 6, 2041--2092.
		\bibitem{Hyt7}
		T. Hyt\"onen, M. T. Lacey,  H. Martikainen, T. Orponen, M. C. Reguera, E. T. Sawyer, I. Uriarte-Tuero, Weak and strong type estimates for maximal truncations of Calder\'on-Zygmund operators on $A_p$ weighted spaces, J. Anal. Math. 118 (2012), no. 1, 177--220, available at arxiv:1103.5229 (2011).
		\bibitem{Hyt3}
		T. Hyt\"onen, M. Lacey and C. P\'erez, Sharp weighted bounds for the $q$-variation of singular integrals, Bulletin London Math. Soc., 45 (2013), no. 3, 529--540.
		\bibitem{Hyt4}
		T. Hyt\"onen and C. P\'erez, Sharp weighted bounds involving $A_\infty$, Analysis $\&$ PDE,  6 (2013), no. 4, 777--818.
		\bibitem{Hyt5}
		T. Hyt\"onen, C. P\'erez and E. Rela, Sharp Reverse H\"older property for $A_\infty$ weights on spaces of homogeneous type, J. Funct. Anal., 263 (2012), no. 12, 3883--3899. 
		\bibitem{Hyt6}
		T. Hyt\"onen, L. Roncal and O. Tapiola, Quantitative weighted estimates for rough homogeneous singular integrals, Israel J. Math. 218 (2017), no. 1, 133--164, available at http://arxiv.org/abs/1510.05789 (2015).
		\bibitem{IzKa}
		M. Izumisawa and N. Kazamaki, Weighted norm inequalities for martingales, Tôhoku Math. J.
		29 (1977), no. 1, 115--124.
		\bibitem{Kar1} G.  A.  Karagulyan, Exponential Estimates of the Calder\'{o}n--Zygmund Operator and Related Questions about Fourier Series, Mathematical Notes, 71(2002), no. 3-4,  362--373.
		\bibitem{Kar2}
		G. A. Karagulyan, On exponential estimates of partial sums of
		Fourier series by Walsh and rearranged Haar systems, Journal of Contemporary Mathematical Analysis, 36(2001), no 5, 19-30.
		\bibitem{Lac1}
		M. L. Lacey, An elementary proof of the $A_2$ bound, Israel J. Math.,  217 (2017), no. 1, 181--195, available at
		http://arxiv.org/abs/1501.05818.
		\bibitem{Ler2}
		A. K. Lerner, A pointwise estimate for the local sharp maximal function with applications to singular integrals, Bull. London Math. Soc., 42 (2010), no. 5, 843-856.
		\bibitem{Ler3}
		A.K. Lerner, On an estimate of Calder\'on-Zygmund operators by dyadic positive operators, J. Anal. Math., 121 (2013), 141-161..
		\bibitem{Ler4}
		A.K. Lerner, A simple proof of the $A_2$ conjecture, Int. Math. Res. Not. IMRN, 14 (2013), 3159--3170.
		\bibitem{Ler6}
		A.K. Lerner, On pointwise estimates involving sparse operators, New York J. Math., 22 (2016), 341--349, available at http://arxiv.org/abs/1512.07247 (2015).
		\bibitem{LerNaz}
		A.K. Lerner and F. Nazarov, Intuitive dyadic calculus: The basics, Expositiones Mathematicae, to appear, available at http://arxiv.org/abs/1512.07247 (2015).
		\bibitem{Muck}
		B. Muckenhoupt, Weighted norm inequalities for the Hardy maximal function, Trans. Amer. Math. Soc., 165 (1972), 207--226.
		\bibitem{Moen1}
		K. Moen, Weighted inequalities for multilinear fractional integral operators, Collect. Math., 60 (2009), 213-238.
		\bibitem{Moen2}
		K. Moen, Sharp weighted bounds without testing or extrapolation, Arch. Math.(Basel), 99
		(2012), no. 5, 457--466.
		\bibitem{PTV}
		C. P\'erez, S. Treil and A. Volberg, On $A_2$ conjecture and corona decomposition
		of weights, preprint, available at http://arxiv.org/abs/1006.2630
		\bibitem{Pet1}
		S. Petermichl, The sharp bound for the Hilbert transform in weighted Lebesgue spaces in terms of the classical $A_p$ characteristic, Amer. J. Math. 129 (2007), no. 5, 1355-1375.
		\bibitem{Pet2}
		S. Petermichl, The sharp weighted bound for the Riesz transforms, Proc. Amer.
		Math. Soc., 136 (2008), no. 4, 1237-1249.
		\bibitem{PeVo}
		S. Petermichl and A. Volberg, Heating of the Ahlfors-Beurling operator: weakly
		quasiregular maps on the plane are quasiregular, Duke Math. J. 112 (2002),
		no. 2, 281-305.
		\bibitem{TTV}
		Chistoph  Thiele, Sergei Treil and Alexander Volberg, Weighted martingale multipliers in the non-homogeneous setting and outer measure spaces. Adv. Math. 285 (2015), 1155--1188.
		\bibitem{Vag1}
		A. Vagharshakyan, Recovering singular integrals from Haar shifts. Proc. Amer. Math. Soc., 138(2010), no. 12, 4303--4309.
		\bibitem{Zyg}
		A.Zygmund, Trigonometric Series, vol. 2, Cambridge Univ. Press, 1959. 
		
	\end{thebibliography}
\end{document}